\documentclass[12pt]{amsart}

\usepackage{amsmath,amsthm,amscd,amsfonts,amssymb,epic,eepic,bbm}
\usepackage[pagebackref,colorlinks=true,linkcolor=blue,citecolor=blue]{hyperref}

\allowdisplaybreaks
\setlength{\topmargin}{0truein} \setlength{\headheight}{.25truein}
\setlength{\headsep}{.25truein} \setlength{\textheight}{9truein}
\setlength{\footskip}{.25truein} \setlength{\oddsidemargin}{0truein}
\setlength{\evensidemargin}{0truein}
\setlength{\textwidth}{6.5truein} \setlength{\voffset}{-0.5truein}
\setlength{\hoffset}{0truein}

\vfuzz2pt 
\hfuzz2pt 
\newtheorem{thm}{Theorem}[section]

\newtheorem{lem}[thm]{Lemma}

\newtheorem{prop}[thm]{Proposition}
\theoremstyle{remark}

\newcounter{remarkscounter}


\numberwithin{equation}{section}
\newcommand{\A}{\mathbb{A}}
\newcommand{\GL}{\mathrm{GL}}
\newcommand{\gl}{\mathfrak{gl}}

\newcommand{\ZZ}{\mathbb{Z}}

\newcommand{\QQ}{\mathbb{Q}}

\newcommand{\lto}{\longrightarrow}

\newcommand{\OO}{\mathcal{O}}
\newcommand{\CC}{\mathbb{C}}
\newcommand{\RR}{\mathbb{R}}
\newcommand{\GG}{\mathbb{G}}

\newcommand{\quash}[1]{}

\theoremstyle{definition}

\renewcommand{\bar}{\overline}

\numberwithin{equation}{subsection}

\newcommand{\one}{\mathbbm{1}}


\linespread{1.2}

\begin{document}

\title[Secondary terms for quadratic forms]{Secondary terms in asymptotics for the number of zeros of quadratic forms over number fields}
\author{Jayce R. Getz}
\address{Department of Mathematics\\
Duke University\\
Durham, NC 27708}
\email{jgetz@math.duke.edu}

\subjclass[2010]{Primary 11D45;  Secondary 11E12, 11E45, 11N45}

\thanks{The author is thankful for partial support provided by NSF grant DMS-1405708.  Any opinions, findings, and conclusions or recommendations expressed in this material are those of the author and do not necessarily reflect the views of the National Science Foundation.}
\maketitle

\begin{abstract}

Let $Q$ be a nondegenerate quadratic form on a vector space $V$ of even dimension $n$ over a number field $F$.   Via the circle method or automorphic methods one can give good estimates for smoothed sums over the number of zeros of the quadratic form whose coordinates are of size at most $X$ (properly interpreted).  For example, when $F=\QQ$ and $\dim V>4$ Heath-Brown has given an asymptotic of the form 
\begin{align} \label{HB:esti}
c_1X^{n-2}+O_{Q,\varepsilon,f}(X^{n/2+\varepsilon})
\end{align}
for any $\varepsilon>0$.  Here $c_1 \in \CC$ and $f \in \mathcal{S}(V(\RR))$ is a smoothing function.
We refine Heath-Brown's work to give an asymptotic of the form
$$
c_1X^{n-2}+c_2X^{n/2}+O_{Q,\varepsilon,f}(X^{n/2+\varepsilon-1})
$$
over any number field.  Here $c_2 \in \CC$.  
Interestingly the secondary term $c_2$ is the sum of a rapidly decreasing function on $V(\RR)$ over the zeros of $Q^{\vee}$, the form whose matrix is inverse to the matrix of $Q$.  We also prove analogous results in the boundary case $n=4$, generalizing and refining Heath-Brown's work in the case $F=\QQ$.
\end{abstract}

\tableofcontents

\section{Introduction}

\subsection{The circle method applied to quadratic forms}
Let $V=\mathbb{G}_a^n$, thus $V(\QQ)$ is a vector space of dimension $n$ over $\QQ$.  Let $Q$ be a quadratic form on $V(\QQ)$.  Let
$$
\Delta:\RR_{>0} \lto V(\RR)
$$
be the diagonal embedding and let $f_\infty \in \mathcal{S}(V(\RR))$ (the usual Schwartz space).  A classical problem in analytic number theory is the asymptotic evaluation of
\begin{align} \label{gamma:sum}
N(X):=N(X,Q,f_\infty):=\sum_{\substack{\xi \in V(\ZZ)\\Q(\xi) =0}}f_\infty\left(\frac{\xi}{\Delta(X)} \right).
\end{align}
This is a smoothed version of the number of zeros of the quadratic form of height at most $X$.

Suppose $n>4$.  The circle method (or Kloosterman's refinement of it) allows one to establish an asymptotic of the form 
\begin{align} \label{N(X):esti}
N(X)=c_1X^{n-2}+o_{Q,f_\infty}(X^{n-2})
\end{align}
where $c_1 \in \CC$ is essentially the so-called singular series.  In the beautiful paper \cite{HBnewcircle}
Heath-Brown  refined the error term in \eqref{N(X):esti} and extended the range in which one could obtain similar asymptotics to $n>2$. For example, in the range $n>4$ he proved that for any $\varepsilon>0$ one has
\begin{align} \label{HB:esti}
c_1X^{n-2}+O_{Q,f_\infty,\varepsilon}(X^{(n-1+\beta)/2+\varepsilon})
\end{align}
where $\beta \in \{0,1\}$ is $0$ if $n$ is odd and $1$ if $n$ is even 
(see \cite[Theorem 5]{HBnewcircle}).  Heath-Brown's primary tool was
an idea of Duke, Friedlander, and Iwaniec \cite{DFI} that will be mentioned again later.

In this paper we refine this asymptotic still further. For even $n>4$ we obtain an asymptotic of the form
\begin{align}
N(X)=c_1X^{n-2}+c_2X^{n/2}+O_{\varepsilon}(X^{n/2-1+\varepsilon})
\end{align}
for some $c_2 \in \CC$. The complex number $c_2$ can be nonzero (see 
\cite{Getz:RSMonoid}) so this asymptotic implies in particular that Heath-Brown's 
estimate \eqref{HB:esti} is essentially sharp (at least for $n$ even).  For $n=4$ we also obtain an asymptotic, which is a refinement of Heath-Brown's estimate in this case.  In fact refining Heath-Brown's work on the boundary case of $n=4$ is the key to our argument.  We will explain this in more detail in \S \ref{ssec:sketch}.

\subsection{Statement of results over the rationals}  \label{ssec:statement} The main theorem of this paper, Theorem \ref{thm:main}, is valid over any number field.    We work adelically because working classically unnecessarily complicates matters (especially over number fields).  It also allows one to treat congruence conditions on the zeros of the quadratic form with no additional effort.  
 To ease the path for readers unfamiliar with this language, we will only state our results over $\QQ$ in the introduction.  We could work entirely classically in this context, but we feel that working adelically and translating the work explicitly back to classical language will help the reader tackle the rest of the paper.  In the introduction we will also only use an unramified test function at the finite places; this corresponds to having no supplementary congruence conditions on the zeros of $Q$ over which we are summing.

To state our result, let $\Phi_{0\infty} \in \mathcal{S}(\RR \times \RR)$, the Schwartz space of $\RR \times \RR$.  We assume that $\Phi_{0\infty}(t,0)=0$ for all $t$ and that $\int_{\RR}\Phi(0,t)dt=1$.
Let $\A_\QQ$ denote the adeles of $\QQ$, and let $\psi:\QQ \backslash \A_\QQ \to \CC^\times$ be a nontrivial character.  For ease of exposition, in the introduction we will take the character
\begin{align}
\psi=\psi_\infty \prod_p\psi_p
\end{align}
where $\psi_\infty(x)=e(-x)$ and $\psi_p(x)=e(\mathrm{pr}_p(x))$.  Here $e(x):=e^{2 \pi i x}$ and $\mathrm{pr}_p(x) \in \ZZ[p^{-1}]$ is chosen so that $x-\mathrm{pr}_p(x) \in \ZZ_p$.  

Let $|\cdot|:\A_\QQ^\times \to \RR_{>0}$ denote the adelic norm.  Thus $|\cdot|$ is the product of the usual archimedian norm $|\cdot|_\infty$ and all the $p$-adic norms $|\cdot|_p$. 
Let $\Phi_0=\Phi_{0\infty}\one_{\widehat{\ZZ}^2}$ (regarded as a function on $\A_\QQ^2$), and let $f=f_\infty \one_{V(\widehat{\ZZ})}$.  Here $\widehat{\ZZ}:=\prod_p \ZZ_p$.  Let $$
\Phi(x,y,w):=\Phi_0(x,y)f(w),
$$
regarded as a function on $\A_\QQ^2 \times V(\A_\QQ)$, and set
$$
\Phi^{\mathrm{sw}}(x,y,w):=\Phi(y,x,w).
$$

Let 
\begin{align*}
\langle\,,\,\rangle:V(\QQ) \times V(\QQ) &\lto \QQ\\
(x,y) &\longmapsto x^ty
\end{align*}
be the standard inner product. 
For $\xi \in V(\QQ)$ and (unitary) characters $\chi:\QQ^\times \backslash \A_\QQ^\times \to \CC^\times$ let $\chi_s:=\chi|\cdot|^s$.  In particular $1_s:=|\cdot|^s$.  Let
\begin{align*}
\mathcal{I}(\Phi,\chi_s)(\xi)&=\int_{\A_\QQ^\times \times V(\A_\QQ)}\Phi\left(\frac{Q(w)}{t},t,w \right)\psi\left(\frac{\langle \xi,w \rangle}{t} \right) \chi_s(t)dt^\times dw\\
&=\int_{\RR^\times \times V(\RR)}\Phi_\infty\left(\frac{Q(w)}{t},t,w \right)e\left(\frac{\langle \xi, w\rangle}{t} \right)\chi_{s\,\infty}(t) dt^\times dw \\& \times \prod_{p} \int_{V(\ZZ_p)} \int_{\ZZ_p}\one_{t\widehat{\ZZ}_p}(Q(w))\psi_p\left(\frac{\langle \xi,w \rangle}{t} \right)\chi_{s\,p}(t)dt^\times dw.
\end{align*}
It is not hard to see that this function converges absolutely for $\mathrm{Re}(s)>1$ and vanishes unless $\xi \in N^{-1}V(\ZZ)$ for a sufficiently large integer $N$. To make the classical analogue of this integral clearer, if the determinant of the matrix $J$ associated to the quadratic form $Q$ is in $\GL_n(\ZZ_p)$ for some prime $p \neq 2$, then 
\begin{align*}
\int_{V(\ZZ_p)} \int_{\ZZ_p}\one_{t\widehat{\ZZ}_p}(Q(w))\psi_p\left(\frac{\langle \xi,w \rangle}{t} \right)|t|^s_pdt^\times dw=\one_{V(\ZZ_p)}(\xi) \sum_{k=0}^\infty p^{-kn -ks}\sum_{\substack{w \in V(\ZZ/p^k\ZZ)\\ Q(w) \equiv 0 \pmod{p^k}}}
e\left(\frac{\langle \xi,w\rangle}{p^k}\right).
\end{align*}
When $\xi=0$ this is equal to
\begin{align*}
& \sum_{k=0}^\infty p^{-kn -ks-k}\sum_{\substack{w \in V(\ZZ/p^k\ZZ)\\ Q(w) \equiv 0 \pmod{p^k}}}\sum_{a\in \ZZ/p^k}e\left(\frac{aQ(w)}{p^k}\right)\\
&= \sum_{k=0}^\infty\sum_{i=0}^k p^{-(k-i)n -ks-k}\sum_{\substack{w \in V(\ZZ/p^{k-i}\ZZ)}}\sum_{a \in (\ZZ/p^{k-i})^{\times}}e\left(\frac{aQ(w)}{p^{k-i}}\right)\\
&=\sum_{i=0}^\infty p^{-i(s+1)}\sum_{k=0}^\infty p^{-kn-ks-k}\sum_{\substack{w \in V(\ZZ/p^{k}\ZZ)}}\sum_{a \in (\ZZ/p^k)^{\times}}e\left(\frac{aQ(w)}{p^k}\right).
\end{align*}
This makes the relationship with the usual singular series clear (see below \cite[Lemma 31]{HBnewcircle}, for example). 

Let $Q^{\vee}$ be the quadratic form with matrix $J^{-1}$.  The following is our main theorem over the rationals:
\begin{thm} \label{thm:intro}
 Assume that $n \geq 4$ is even and that $\varepsilon>0$.  If  $(-1)^{n/2}\det J$ is not a square of a rational number
then 
\begin{align*}
N(X)&=X^{n-2}\mathrm{Res}_{s=-1}\left(\mathcal{I}(\Phi,1_s)(0)-\mathcal{I}(\Phi^{\mathrm{sw}},1_s)(0)\right)+
O_\varepsilon(X^{n/2+\varepsilon-1}).
\end{align*}
If $n>4$ and $(-1)^{n/2}\det J$ is the square of a rational number then
\begin{align*}N(X)&=X^{n-2}\mathrm{Res}_{s=-1}\left(\mathcal{I}(\Phi,1_s)(0)-\mathcal{I}(\Phi^{\mathrm{sw}},1_s)(0)\right)
\\&+\sum_{\substack{\xi \in V(\QQ)\\ Q^\vee(\xi)=0}}X^{n/2}\mathrm{Res}_{s=1-n/2}\left(\mathcal{I}(\Phi,1_s)(\xi)-\mathcal{I}(\Phi^{\mathrm{sw}},1_s)(\xi) \right)+O_\varepsilon(X^{n/2+\varepsilon-1}).
\end{align*}

If $n=4$ and $\det J $ is the square of a rational number, then
\begin{align*}
N(X)&=X^2 \log X \lim_{s \to -1}\left(\frac{\mathcal{I}(\Phi,1_s)(0)-\mathcal{I}(\Phi^{\mathrm{sw}},1_s)(0)}{\Lambda(s+2)^2}\right)\\&
+
X^2\mathrm{Res}_{s=-1}\left(\mathcal{I}(\Phi,1_s)(0)-\mathcal{I}(\Phi^{\mathrm{sw}},1_s)(0)\right)\\
&+\sum_{\substack{0 \neq \xi \in V(\QQ)\\ Q^\vee(\xi)=0}}X^{2}\mathrm{Res}_{s=-1}\left(\mathcal{I}(\Phi,1_s)(\xi)-\mathcal{I}(\Phi^{\mathrm{sw}},1_s)(\xi) \right)+O(X^{1+\varepsilon}).
\end{align*}
\end{thm}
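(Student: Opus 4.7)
The plan is to combine an adelic delta-method identity with a Mellin-Barnes contour shift applied to a global Dirichlet series built from the local integrals $\mathcal{I}(\Phi,1_s)(\xi)$. First I would use the hypotheses $\int_\RR\Phi_0(0,t)dt=1$ and $\Phi_0(t,0)=0$ to set up an adelic delta-method identity: the function $w\mapsto\int_{\A_\QQ^\times}\Phi_0(Q(w)/t,t)\,d^\times t$, properly regularized in a complex parameter $s$, behaves as an adelic delta function supported on the affine quadric $Q=0$, with mass normalized by the marginal condition and regularity at the boundary enforced by $\Phi_0(t,0)=0$. Applied to $f_\infty(\xi/\Delta(X))\one_{V(\widehat{\ZZ})}$ and summed over $\xi\in V(\QQ)$, this rewrites $N(X)$ as an integral over $\A_\QQ^\times\times V(\A_\QQ)$. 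Adelic Poisson summation on $V(\QQ)\hookrightarrow V(\A_\QQ)$ then converts the sum over $V(\QQ)$ into a dual sum in $\xi$ with characters $\psi(\langle\xi,w\rangle/t)$ appearing, yielding the $\mathcal{I}$-integrals.

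Next I would introduce a Mellin parameter $s$ weighting $|t|^s$ and split the $t$-integration into the ranges $|t|\le 1$ and $|t|\ge 1$. On the second range, a change of variable $t\mapsto Q(w)/t$ interchanges the first two arguments of $\Phi_0$ and so manufactures $\Phi^{\mathrm{sw}}$; combining the two contributions produces the difference $\mathcal{I}(\Phi,1_s)(\xi)-\mathcal{I}(\Phi^{\mathrm{sw}},1_s)(\xi)$ inside a Mellin-Barnes contour integral. Tracking the rescaling by $\Delta(X)^{-1}$ in the archimedean $w$-variable produces the overall factor $X^{n-1+s}$. For $\mathrm{Re}(s)=c$ sufficiently large the integral converges absolutely, giving
\begin{align*}
N(X)=\frac{1}{2\pi i}\int_{(c)} X^{n-1+s}\sum_{\xi\in V(\QQ)}\bigl[\mathcal{I}(\Phi,1_s)(\xi)-\mathcal{I}(\Phi^{\mathrm{sw}},1_s)(\xi)\bigr]ds.
\end{align*}

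The asymptotic is extracted by shifting the contour to $\mathrm{Re}(s)=-n/2+\varepsilon$ and collecting residues. The $\xi=0$ summand has a simple pole at $s=-1$ coming from the local factor $\sum_i p^{-i(s+1)}$ visible in the computation preceding the theorem, which assembles globally into $\Lambda(s+2)$; its residue yields the main term $X^{n-2}\mathrm{Res}_{s=-1}[\cdots]$. The sum over $\xi\ne 0$ produces a secondary pole at $s=1-n/2$: the local Igusa-type factors of $(Q,\xi)$ combine globally into a Dirichlet $L$-function $L(s+n/2,\chi_Q)$, where $\chi_Q$ is the quadratic character attached to $(-1)^{n/2}\det J$. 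This $L$-function has a pole at $s=1-n/2$ exactly when $\chi_Q$ is trivial, i.e.\ when $(-1)^{n/2}\det J$ is a rational square, and its residue picks out the sum over $\xi\in V(\QQ)$ with $Q^\vee(\xi)=0$. In the boundary case $n=4$ the two poles coincide at $s=-1$; Laurent-expanding $X^{3+s}=X^2\cdot X^{s+1}$ around this double pole, with the normalizing factor $\Lambda(s+2)^2$ accounting for the order of vanishing, extracts both the $X^2\log X$ and $X^2$ coefficients. The shifted contour integral is bounded by $O(X^{n/2-1+\varepsilon})$ (respectively $O(X^{1+\varepsilon})$ for $n=4$) using convexity bounds for $\Lambda$ and $L$ together with Schwartz decay of $\Phi$.

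The hardest step is establishing the precise meromorphic continuation of the Dirichlet series $\sum_\xi\mathcal{I}(\Phi,1_s)(\xi)$ and controlling all of its poles globally, in particular confirming that the secondary pole appears only under the stated rational-square condition on $(-1)^{n/2}\det J$. This rests on a Tate-style global synthesis of the local Igusa zeta functions for the pairs $(Q,\xi)$ at each finite place, and careful bookkeeping for when the local conditions singling out $Q^\vee(\xi)=0$ are globally compatible. A related subtlety arises in the $n=4$ case, where one must simultaneously extract two terms from a single double pole and still maintain a uniform $O(X^{1+\varepsilon})$ error despite the loss from coinciding singularities.
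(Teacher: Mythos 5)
Your architecture matches the paper's proof of Theorem \ref{thm:main} in broad outline: adelic $\delta$-method, Poisson summation, Mellin inversion (which over $\QQ$ with unramified $\Phi^\infty$ indeed collapses to the single trivial character), contour shift to $\mathrm{Re}(s)=\varepsilon-n/2$, residues at $s=-1$ and $s=1-n/2$, with the $n=4$ double pole handled by a Laurent expansion. However there are two genuine gaps.

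First, the mechanism you describe for producing $\Phi^{\mathrm{sw}}$ is not right. In the paper the difference $\Phi_0(a,b)-\Phi_0(b,a)$ is already built into the $\delta$-symbol identity itself (\cite[Proposition 2.1]{Getz:RSMonoid}), invoked \emph{before} Poisson summation or any Mellin step, applied directly to $\delta_{Q(\xi)}$. Your proposed derivation via a split $|t|\le 1$, $|t|\ge 1$ and a change of variable $t\mapsto Q(w)/t$ does not work: at the point where one sums over $\xi$ with $Q(\xi)=0$ the map $t\mapsto Q(\xi)/t$ is identically zero, and after Poisson summation the kernel $\psi(\langle\xi,w\rangle/t)$ does not transform cleanly under $t\mapsto Q(w)/t$. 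Second, and more seriously, you omit the pole at $s=0$. The $\xi=0$ summand of the difference has a potential simple pole there: the local factor $\sum_i p^{-i(s+1)}$ that you cite assembles into $\zeta(s+1)$, whose pole is at $s=0$, \emph{not} $s=-1$ and \emph{not} $\Lambda(s+2)$. An uncancelled residue there would produce a term of size $X^{n-1}$ dominating the claimed main term $X^{n-2}$. The paper's Lemma \ref{lem:iszero} shows that this residue vanishes in $\mathcal{I}(\Phi,1_s)(0)-\mathcal{I}(\Phi^{\mathrm{sw}},1_s)(0)$ by the change of variables $t\mapsto Q(v)t$, $t\mapsto t^{-1}$ at $s=0$; this step is not optional and your proposal has no substitute for it. The pole at $s=-1$ that actually yields $X^{n-2}$ is archimedean in origin, arising from the $\Gamma$-factor in $\gamma(1_{s+1},\psi_\infty)^{-1}$ once one continues $\mathcal{I}_\infty(\Phi,1_s)(0)$ past $\mathrm{Re}(s)=-1/2$ via the local functional equation, not from a finite Euler product. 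Finally, when $n=4$ and $\mathcal{G}$ is trivial the residues at $s=-1$ and $s=1-n/2$ coincide and one must insert an explicit correction term, as the paper does, to avoid double-counting them in your Laurent-expansion step.
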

\noindent Here $1_s:=|\cdot|^s$.
The generalization of this theorem to number fields is Theorem \ref{thm:main} below.  We note that there are additional terms in the general formula involving integrals $\mathcal{I}(\Phi,\mathcal{G}_s)$ for a certain character $\mathcal{G}$.  The fact that in the introduction we are working over $\QQ$ and letting $\Phi^{\infty}=\one_{\widehat{\ZZ}^2 \times V(\widehat{\ZZ})}$ is responsible for the simplification of the formula above.  The author thanks Heath-Brown for comments related to this fact.    We will explain how to derive the simplified expression above after stating Theorem \ref{thm:main} below.  We also note that under the assumptions of this introduction the functions $\Phi$ and $\Phi^{\mathrm{sw}}$ only differ at the archimedian place, therefore
$$
\mathcal{I}(\Phi,\chi_s)-\mathcal{I}(\Phi^{\mathrm{sw}},\chi_s)=\left(\mathcal{I}_\infty(\Phi,\chi_s)-\mathcal{I}_\infty(\Phi^{\mathrm{sw}},\chi_s)\right)\prod_{p}\mathcal{I}(\one_{\ZZ_p^2 \times V(\ZZ_p)},\chi_s)
$$
This observation makes it easier to see how our expression relates to that of Heath-Brown.  The difference of archimedian functions occurring here is a hallmark of the $\delta$-symbol method.

The assumption that $n$ is even is necessary in order for this theorem to be true as stated.  The point is that the local integrals $\mathcal{I}(\Phi,\chi_s)$ are slightly more complicated when $n$ is odd.  We still think that there is a formula for lower order terms in $N(X)$ when $n$ is odd however, and feel that it is an interesting problem to describe it.

We will outline the proof of the theorem after giving some remarks on related literature.

\subsection{Remarks on related literature}
There are other results in the literature where secondary terms
are obtained via the circle method.  
 We mention a few with no claims to completeness.  First we point out \cite{Lindqvist:weak:approx}, which treats the $F=\QQ$ and $n=4$ case of the paper \cite{HBnewcircle} mentioned above, but with congruence conditions.  Theorem \ref{thm:main} in the special case $F=\QQ$ and $n=4$ recovers her result, with a better error term, but with less explicit analysis of the contribution of ramified places (including $\infty$).   
 Vaughn and Wooley \cite{VaughanWooleyhoexp} and Schindler \cite{SchindlerHigherOrderExpansions}  have investigated secondary and higher order terms in Waring's problem.  However, these terms appear for an entirely different reason, namely that in these works they estimate the number of zeros in a suitable box instead of a smoothed box using a Schwartz function $f$ as above.  Using a Schwartz function eliminates these terms and thus they do not appear in our analysis.  Another case where one can (conditionally) obtain information about a secondary term in the circle method is treated in \cite{HB:circle:diagonal:cubic}.  

The point counting problems we study can also be examined from the point of view of height zeta functions.  It is likely that Theorem \ref{thm:main} implies the meromorphic continuation of a suitable height zeta function to a larger half plane than was previously known.  We refer to \cite{Franke:Manin:Tschinkel} and \cite{ChambertLoir:Lectures} for details on height zeta functions.  

The work in 
\cite{Franke:Manin:Tschinkel} is based on estimating   $N(X)$ via Eisenstein series.  It can also be studied via theta functions or by realizing the zero locus of the quadratic form as a homogeneous space (see \cite{DukeRudnickSarnak}, for example).  From this perspective 
it is probably possible to obtain a secondary term as we have, but the description obtained in this manner is spectral in nature.

In contrast, the description of the secondary term we give is manifestly geometric.  It only involves quadratic forms and Hecke Gr\"o{\ss}encharaktere as opposed to residues of automorphic $L$-functions on nonabelian groups.  The proof and its statement make no use of analytic properties of automorphic representations apart from Hecke Gr\"o{\ss}encharaktere.  This is important because the author hopes that results of this type can be used to prove expected analytic properties of automorphic $L$-functions that are currently unknown.  This is discussed in more detail in \cite{Getz:RSMonoid}, and is intimately related to Langlands' beyond endoscopy idea \cite{LanglandsBE} and Braverman and Kazhdan's ideas on nonabelian Fourier transforms \cite{BK-lifting} (see also work of L.~Lafforgue \cite{LafforgueJJM} and Ng\^o \cite{NgoSums}, as well as the other work cited in \cite{Getz:RSMonoid}).  

We also point out that if one were able to obtain secondary and higher order asymptotics for the number of solutions of higher degree forms (or systems of forms) there would be profound consequences in automorphic representation theory.  The point is roughly that the main term in these problems usually corresponds to the trivial representation when there is an automorphic (spectral) interpretation of the situation.  Terms and estimates that are sufficiently small in magnitude compared to the main term, in contrast, are intimately connected to the cuspidal spectrum, which is really the focus of interest from the point of view of automorphic representation theory.

Some automorphic applications can already be obtained from this work on quadratic forms.  As observed in a special case in \cite{Getz:RSMonoid}, Theorem \ref{thm:intro} can be viewed as giving a summation formula for the subscheme of $V$ defined by the vanishing of the quadratic form $Q$.  Assume for simplicity that $n >4$ and $(-1)^{n/2}\det J  \in (\QQ^\times)^2$ (i.e.~it is a square).  Then Theorem \ref{thm:intro} implies that 
\begin{align*}
&\sum_{\substack{\xi \in V(\QQ)\\Q^\vee(\xi)=0}}\mathrm{Res}_{s=1-n/2}\mathcal{I}(\Phi,1_s)(\xi)\\
&=\lim_{X \to \infty}\frac{N(X)-X^{n-2}\left(\mathcal{I}(\Phi,1_0)(0)-\mathcal{I}(\Phi^{\mathrm{sw}},1_0)(0) \right)}{X^{n/2}}+\sum_{\substack{\xi \in V(\QQ)\\ Q^\vee(\xi)=0}}\mathrm{Res}_{s=1-n/2}\mathcal{I}(\Phi^{\mathrm{sw}},1_s)(\xi).
\end{align*}
The analogous assertion remains true over general number fields but we will not mention it later after stating Theorem \ref{thm:main}.  Interestingly, the limit in this expression can be computed spectrally in some cases; this was the motivation for \cite{Getz:RSMonoid}.  One immediate application of Theorem \ref{thm:main}, the general version of Theorem \ref{thm:intro}, is that the results of loc.~cit. can be generalized from submonoid of $\gl_2 \times \gl_2$ consisting of pairs of matrices with equal determinant to the case where $\gl_2$ is replaced by an arbitrary division algebra and $\det$ is replaced by the reduced norm.  We will not go into the details in this paper.

\subsection{Sketch of the proof} \label{ssec:sketch}

We now discuss the idea of the proof of Theorem \ref{thm:intro}.  The formal proof (for arbitrary number fields) is given in \S \ref{ssec:main} below.
In the notation of \S \ref{ssec:statement} one has
\begin{align} \label{gamma:sum}
N(X):&=\sum_{\substack{\xi \in V(\ZZ)\\ Q(\xi) =0}}f_\infty\left(\frac{\xi}{\Delta(X)} \right)=\sum_{\substack{\xi \in V(\QQ)\\ Q(\xi)=0}}f\left( \frac{\xi}{\Delta(X)}\right)=\sum_{\xi \in V(\QQ)} \delta_{Q(\xi)} f\left(\frac{\xi}{\Delta(X)} \right) .
\end{align}
Here $\Delta(X)$ is viewed as an idele by identifying $\RR^\times$ with its image under the natural embedding  
\begin{align*}
\RR^\times &\lto \RR^\times \times (\A_\QQ^\infty)^\times=\A_\QQ^\times\\
x &\longmapsto (x,1)
\end{align*}
and $\xi$ is viewed as an element of $V(\A_\QQ)$ via the diagonal embedding $V(\QQ) \hookrightarrow V(\A_\QQ)$. Moreover 
\begin{align} \label{deltax}
\delta_x=\begin{cases}1 \textrm{ if } x=0\\
0 \textrm{ otherwise}. \end{cases}
\end{align}

 As in \cite{HBnewcircle}, our main tool in analyzing \eqref{gamma:sum} is an expansion of this $\delta$-symbol due essentially to Duke, Friedlander, and Iwaniec \cite{DFI}.  This has been generalized to number fields by Browning and Vishe \cite{BrowningVisheCircleMethod}.  Unfortunately their generalization is not adelic, and hence is not optimal from the point of view of possible applications to automorphic forms.  In \cite[Proposition 2.1]{Getz:RSMonoid} we gave an adelic expansion of the $\delta$-symbol and it is this expansion that is used in the current paper.  The two expressions are more or less equivalent.   
In any case, applying \cite[Proposition 2.1]{Getz:RSMonoid} 
see that $N(X)$ is equal to 
\begin{align*}
\sum_{\substack{\xi \in V(\QQ)}}\frac{c_{X,\Phi_0}}{X}\sum_{d \in \QQ^\times}\left(\Phi_0\left(\frac{Q(\xi)}{d\Delta(X)},\frac{d}{\Delta(X)} \right)-\Phi_0\left(\frac{d}{\Delta(X)},\frac{Q(\xi)}{d\Delta(X)} \right)\right)f\left(\frac{\xi}{\Delta(X)} \right).
\end{align*}
Here we take $X$ sufficiently large (in a sense depending on $\Phi_0$) and for any $N>0$ one has 
\begin{align} \label{c:func}
c_{X,\Phi_0}=1+O_{\Phi_0,N}(X^{-N}).
\end{align}
Note that since $\Phi_0^\infty=\one_{\widehat{\ZZ}^2}$ the sum on $d$ is really over $d \in \ZZ-0$ that divide $Q(\xi)$.

We now apply Poisson summation in $\xi$ and a  Mellin transform in $d$ to write the above as 
\begin{align*}
\frac{X^{n-1}c_{X,\Phi_0}}{2 \pi i}\sum_{\xi \in V(\QQ)} \sum_{\chi} \int_{\mathrm{Re}(s)=\sigma}X^{s}\left(\mathcal{I}(\Phi,\chi_s)(\xi)-\mathcal{I}(\Phi^{\mathrm{sw}},\chi_s)(\xi)\right)ds.
\end{align*}
Here the sum on $\chi$ is over all characters of $\QQ^\times \backslash \A_{\QQ}^\times$ trivial on $\RR_{>1}$ (viewed as a subgroup of the $\RR^\times$ factor of $\A_\QQ^\times$) and $\sigma \in \RR_{>1}$.  Up to this point everything we have done is the same as in \cite{HBnewcircle}, apart from the expansion of the $\delta$-symbol, which is slightly different here than in loc.~cit.~in that we do not write it in terms of Ramanujan sums.  The integrals $\mathcal{I}(\Phi,\chi_s)(\xi)$ are visibly Euler products.  Heath-Brown computes the finite Euler factors up to degree $p^{2s}$ and bounds the rest of the terms.  This is enough to obtain the asymptotic proven in loc.~cit, and it is roughly equivalent to giving a meromorphic continuation of each $\mathcal{I}(\Phi,\chi_s)(\xi)$ to the range $\mathrm{Re}(s)>1-n/2-\beta$ for some $\beta>0$.  

However, this is not enough to give the asymptotic of Theorem \ref{thm:intro}.  In addition to generalizing everything above to arbitrary number fields (and congruence conditions), our primary contribution in this paper is to realize that the functions $\mathcal{I}(\Phi,\chi_s)(\xi)$ 
admit meromorphic continuations to $\mathrm{Re}(s)>-n/2$.  In fact, we prove that if $S$ is a set of places of $\QQ$ including $\infty$, $2$, and all the places where $\det J$ is not a unit, then
$$
\mathcal{I}^S(\Phi,\chi_s)(\xi),
$$
the factor of $\mathcal{I}(\Phi,\chi_s)(\xi)$ outside of $S$, is holomorphic in the plane if $Q^\vee(\xi) \neq 0$. If $Q^{\vee}(\xi) =0$ it is equal to 
\begin{align*}
\frac{L^S(s+n/2,\chi\mathcal{G})}{L^S(s+1+n/2,\chi \mathcal{G})}\sum_{d|\xi V(\ZZ^S)}\frac{\chi^S(d)}{|d|_S^{s+1}}.
\end{align*}
where $\mathcal{G}$ is the quadratic character of Lemma \ref{lem:global:char}.  This is striking because in the secondary term we again arrive at a sum over zeros, this time of $Q^\vee$.  Moreover, for each fixed $\chi$ and $\xi$ the factor $\mathcal{I}^S(\Phi,\chi_s)(\xi)$  admits a meromorphic continuation to the entire plane.  

As for the places in $S$, the stationary phase method (either over $\RR$ or $\QQ_p$) allows us to control $\mathcal{I}_S(\Phi,\chi_s)(\xi)$ uniformly in $\xi$ and $\chi_s$. Combining this with the observations above and a contour shift we deduce Theorem \ref{thm:intro}.

\subsection{Outline of the paper}

We begin with a section on notation which also reviews the statement of Poisson summation in our context and our normalization of Haar measures.  After this, in \S \ref{sec:asymptotic}, we state and prove our main theorem, the generalization of Theorem \ref{thm:intro} to arbitrary number fields.  In this section we assume the local work of the remaining sections; we feel that this organization makes the overall structure of the argument clearer.  

The local work is completed in \S \ref{sec:arch:bound}-\ref{sec:unram:comp}.  The archimedian computations are in \S \ref{sec:arch:bound}. This is the most technical portion of the paper because we require uniformity of our bounds on the archimedian factors of $\mathcal{I}(\Phi,\chi_s)(\xi)$ both in terms of $\xi$ and the analytic conductor of $\chi_s$.  

The work in the nonarchimedian case is simpler, mostly because the Fourier transform of a compactly supported smooth function is again compactly supported and smooth in this context.  We provide  bounds in the ramified case in \S \ref{sec:nonarch:bound}, and then finish in \S \ref{sec:unram:comp} with the easiest and prettiest arguments in the paper, which deal with the computation of the local factors of $\mathcal{I}(\Phi,\chi_s)(\xi)$ where all the data are unramified.

\subsection{Acknowledgments}
The author thanks V.~Blomer and D.~Schindler
for their comments on the paper \cite{Getz:RSMonoid} from the point of view of the circle method.  Their interest prompted him to write the current paper.  T.~Browning, R.~Heath-Brown, J.~Rouse, Z.~Rudnick, and T.~Wooley made comments on a preliminary draft that significantly improved the exposition.  The author also thanks H.~Hahn for her help with editing and her constant encouragement.

\section{Notation} \label{sec:notation}

\subsection{Quadratic forms}
Let $F$ be a number field with ring of integers $\OO$ and adele ring $\A_F$.  Let $V=\GG_a^n$ for some integer $n$.  
For $F$-algebras $R$ we equip $V(R)$ with the ``standard'' inner product
\begin{align}
\langle, \rangle:V(R) \times V(R) &\lto R\\
(x,y) &\longmapsto x^ty. \nonumber
\end{align}
Let
$\langle\,,\,\rangle_Q :V(F) \times V(F) \to F$ be a 
nondegenerate symmetric bilinear form and let
$$
Q(x)=\tfrac{1}{2}\langle x,x \rangle_{Q}
$$  
be the associated quadratic form.
Let $J \in \GL_n(F)$ be the symmetric matrix such that
\begin{align} \label{J1J2}
\langle x,Jy \rangle:=\langle x,y \rangle_Q.
\end{align}
We write
\begin{align} \label{Qvee}
Q^{\vee}(x):=\langle x,J^{-1}x\rangle=Q(J^{-1}x).
\end{align}

\subsection{Adelic notation}

The usual absolute value on $\A_F$ or $F_v$ will be denoted $|\cdot|$ or $|\cdot|_v$ if the place $v$ is not clear from the context. For complex numbers $c$ we let
$$
|c|_{\mathrm{st}}:=(c\bar{c})^{1/2}.
$$ 
Thus if $v$ is a complex place of $F$ and $c \in F_v \cong \CC$ one has $|c|_{\mathrm{st}}^2=|c|_v$ (the $\mathrm{st}$ is for ``standard'').  

If $S$ is a finite set of places of $F$ we let $\OO^S \subset F$ be the subring of elements integral outside the finite places of $S$. Its profinite completion is
$$
\widehat{\OO}^S:=\prod_{\substack{v \not \in S\\ v \textrm{ finite}}}
\OO_{v}.
$$
We use the notation $F_S:=\prod_{v \in S}F_v$, and $\A_F^S:=\prod_{v \not \in S}'F_v$ (the restricted direct product with respect to the $\OO_{v}$ for $v \not \in S$).  Similar notation involving the subscript and the superscript $S$ will be in use throughout.

\subsection{Poisson summation and Haar measures}

Let $\mathcal{S}(V(\A_F)):=\mathcal{S}(V(F_\infty)) \otimes C_c^\infty(V(\A_F^\infty))$, where $\mathcal{S}(V(F_\infty))$ is the usual Schwartz space.
Let $\psi:F \backslash \A_F \to \CC^\times$ be a nontrivial character.  
We note that for $f \in \mathcal{S}(V(\A_F))$ one has 
\begin{align} \label{PS}
\sum_{\xi \in V(F)} f(\xi)=\sum_{\xi \in V(F)} \widehat{f}(\xi)
\end{align}
where 
$$
\widehat{f}(x)=\int_{V(\A_F)} f(x)\psi(\langle x,y \rangle)dy.
$$
Here and below we always normalize the Haar measure on $V(\A_F)$ so that it is self-dual with respect to the pairing $(x,y) \mapsto \psi\left(\langle x,y\rangle\right)$.  We note that 
$\psi=\prod_{v}\psi_v$ where the product is over all places $v$ of $F$.  We always normalize the local Haar measures on $V(F_v)$ so they are self-dual with respect to the local pairing $(x,y) \mapsto\psi_v(\langle x, y \rangle)$.  Similarly, we normalize the Haar measures on $\A_F$ and $F_v$ so that they are self dual with respect to the pairings $(x,y) \mapsto \psi(xy)$ and $(x,y) \mapsto \psi_v(xy)$, respectively, and finally let $dt^\times$ be the measure on $\A_F^\times$ that is the product of the local measures $\zeta_v(1)\frac{dt}{|t|}$.

\subsection{Local notation}

In \S \ref{sec:arch:bound}, \S \ref{sec:nonarch:bound} and \S \ref{sec:unram:comp} we fix a place $v$ of $F$ (archimedian or nonarchimedian, depending on the section) and let $F:=F_v$, $\OO:=\OO_v$.  We also do this now to explain notation common to these sections.  We let $\mathcal{S}(V(F))$ be the usual Schwartz space of rapidly decreasing functions when $F$ is archimedian and we let $\mathcal{S}(V(F))=C_c^\infty(V(F))$ (compactly supported locally constant functions) when $F$ is nonarchimedian.  
For quasi-characters $\chi:F^\times \to \CC^\times$
and $\Phi \in \mathcal{S}(V(F))$ we define an integral
\begin{align} \label{loc:int}
&\mathcal{I}(\Phi,\chi)(\xi)
:=\int_{F^{\times } \times V(F)}\Phi\left(\frac{Q(w)}{t},t,w\right)\psi\left(\frac{\langle \xi,w \rangle}{t} \right)dw
\chi(t)dt^\times.
\end{align}
  For $w=(w_1,\dots,w_n) \in V(F)$ let
\begin{align}
|w|=\max_{1 \leq i \leq n}\{|w_i|\}.
\end{align}
We also define
\begin{align*}
\Phi^{\mathrm{sw}}(x,y,w):=\Phi(y,x,w).
\end{align*}

\section{The asymptotic formula} \label{sec:asymptotic}

In this section we state and prove our main theorem, Theorem \ref{thm:main}, assuming the local work completed in \S \ref{sec:arch:bound}, \S \ref{sec:nonarch:bound}, and  \S \ref{sec:unram:comp}.  There is no circularity, 
 as these latter sections are independent of the rest of the paper.  Placing the global manipulations first motivates the local arguments that follow.

\subsection{Adelic integrals}
Let
$$
[\GG_{m}]:=A_{\GG_m} F^\times \backslash \A_F^\times
$$
where $A_{\GG_m}$ is $\RR^{\times}_{>0}$ is embedded diagonally into $F_\infty^\times$.  We let $\widehat{[\GG_{m}]}$ denote
the set of characters of $[\GG_{m}]$.  
Let $\chi \in \widehat{[\GG_{m}]}$ and let $s \in \CC$.  
To ease notation set 
$$
\chi_s:=\chi|\cdot|^s,
$$
 where $|\cdot|$ is the idelic norm on $\A_F^\times$.  
For $\Phi \in \mathcal{S}(\A_F^2 \times V(\A_F))$ and $\xi \in V(\A_F)$ we define
\begin{align} \label{adel:int}
\mathcal{I}(\Phi,\chi_s)(\xi):=\int_{\A_F^\times \times V(\A_F)} \Phi\left(\frac{Q(w)}{t},t,w \right)\psi\left(\frac{\langle \xi, w \rangle}{t} \right)dw\chi_s(t)dt^\times.
\end{align}
This is the adelic analogue of the local integral \eqref{loc:int}.  By trivial bounds it converges absolutely for $\mathrm{Re}(s)>1$.  It is Eulerian, so if $\Phi=\Phi_S\Phi^S$ for a finite set of places $S$ it make sense to write
$$
\mathcal{I}(\Phi,\chi_s)=\mathcal{I}_S(\Phi,\chi_s)\mathcal{I}^S(\Phi,\chi_s)
$$  
 where $\mathcal{I}_S$ denotes the factor at $S$ and $\mathcal{I}^S$ denotes the factor away from $S$.

Let $S$ be a finite set of places of $F$ including the infinite places, the dyadic places and the places where $\psi$ is ramified.  We can and do assume that $S$ is large enough that $J \in \GL_n(\OO^S)$ and
$$
\Phi=\Phi_{0S} \one_{\widehat{\OO}^{S2}} \otimes \Phi_{1S} \one_{V(\widehat{\OO}^S)}
$$
where $\Phi_{1S} \in \mathcal{S}(V(F_S))$ and $\Phi_{0S} \in \mathcal{S}(F_S^{2})$.

For a place $v$ of $F$ and $t \in F_v^\times$ let
$$
\gamma_v(t^{-1}Q)=\gamma(\psi_v(t^{-1}Q))
$$
be the number attached by Weil to the map 
\begin{align*}
V(F_v) &\lto \CC^\times\\
w &\longmapsto \psi_v(t^{-1}Q(w)),
\end{align*}
 viewed as a character of second degree \cite{Weil:Certains:groupes}.  This is the sign of a certain Gauss sum attached to $Q$.

\begin{lem} \label{lem:global:char}
There is a unique character $\mathcal{G} \in \widehat{[\GG_m]}$ such that for every place $v \not \in S$ and every $t \in F_v^\times$ one has
$$
\mathcal{G}_v(t)=\gamma_v(t^{-1}Q).
$$
\end{lem}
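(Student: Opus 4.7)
The plan is to construct $\mathcal{G}$ explicitly as the order-two Hecke character of $F$ attached to the element $d := (-1)^{n/2}\det J \in F^\times$, and then to verify both that its components away from $S$ match the Weil indices and that this matching forces uniqueness.

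First I would record the Weil-index identity at an arbitrary place. Diagonalizing $Q$ over $F_v$ (legitimate at every non-dyadic $v$, hence in particular at every $v \notin S$) and combining the orthogonal-sum relation $\gamma_v(Q_1 \oplus Q_2) = \gamma_v(Q_1)\gamma_v(Q_2)$ with Weil's cocycle identity
$$
\gamma_v(1)\gamma_v(st) = \gamma_v(s)\gamma_v(t)(s,t)_v
$$
for one-dimensional forms yields, after telescoping and applying $(\gamma_v(s)/\gamma_v(1))^2 = (-1,s)_v$ together with the evenness of $n$, the clean identity
$$
\gamma_v(t^{-1}Q) = \gamma_v(Q)\cdot (t,\,(-1)^{n/2}\det J)_v.
$$
At a place $v \notin S$ the form $Q$ is unimodular, $\psi_v$ has conductor $\OO_v$, and $v$ is non-dyadic; a standard Gauss-sum computation (see, e.g., \cite{Weil:Certains:groupes}) then gives $\gamma_v(Q) = 1$. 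Hence $\gamma_v(t^{-1}Q) = (t,d)_v$ for every $t \in F_v^\times$ and every $v \notin S$.

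Next I would define the global object $\mathcal{G}$ by $\mathcal{G}(t) := \prod_v (t_v,d)_v$ for $t = (t_v) \in \A_F^\times$. Hilbert reciprocity (the product formula for the Hilbert symbol, valid because $d \in F^\times$) gives $\mathcal{G}|_{F^\times} = 1$, so $\mathcal{G} \in \widehat{[\GG_m]}$; it is precisely the quadratic character cutting out $F(\sqrt{d})/F$ under class field theory. By the computation above $\mathcal{G}_v(t) = (t,d)_v = \gamma_v(t^{-1}Q)$ at every $v \notin S$, settling existence. For uniqueness, any competitor $\mathcal{G}'$ has quotient $\mathcal{G}/\mathcal{G}'$ trivial on $F^\times$ and on $(\A_F^S)^\times$; weak approximation makes the diagonal $F^\times$ dense in $F_S^\times$, and continuity then kills the ratio on all of $\A_F^\times$.

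The main technical obstacle is the Weil-index identity itself. Sign and normalization conventions --- the half in $Q(x) = \tfrac12\langle x,x\rangle_Q$, the self-dual Haar measures fixed in \S\ref{sec:notation}, and Weil's own choice of $\gamma_v(1)$ in \cite{Weil:Certains:groupes} --- must be tracked carefully to be sure that the discriminant appearing in the Hilbert symbol is exactly $(-1)^{n/2}\det J$ and that no stray sign obstructs $\gamma_v(Q) = 1$ at unramified non-dyadic places.
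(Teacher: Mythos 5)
Your proof is correct and arrives at the same answer as the paper, but the route to the key local identity differs: the paper deduces Lemma \ref{lem:global:char} as ``clear from Lemma \ref{lem:nonarch:Gaussian},'' which establishes $\gamma_v(t^{-1}Q) = \left(\tfrac{(-1)^{n/2}\det Q}{\varpi_v}\right)^{v(t)}$ by a direct Gauss-sum computation (diagonalizing $Q$ over $\OO_v$, reducing to one-dimensional Gauss sums via Lemma \ref{lem:nondyadic}, and invoking the classical sign of the quadratic Gauss sum), while you derive the equivalent identity $\gamma_v(t^{-1}Q) = \gamma_v(Q)\,(t, (-1)^{n/2}\det J)_v$ from Weil's cocycle relation $\gamma(1)\gamma(st)=\gamma(s)\gamma(t)(s,t)_v$ together with $(\gamma(s)/\gamma(1))^2 = (-1,s)_v$ and the evenness of $n$. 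Your derivation is a bit more conceptual and avoids the explicit Gauss-sum evaluation, whereas the paper's is more computational but also nails down the value $\gamma_v(Q)=1$ for free (set $t=1$ in the Gaussian integral: $\int_{V(\OO_v)}\psi_v(Q(w))\,dw = 1$ since $\psi_v$ is unramified and $Q$ maps $V(\OO_v)$ into $\OO_v$). You also make explicit two points the paper leaves implicit behind the word ``clear'': that the local Hilbert symbols patch to a genuine element of $\widehat{[\GG_m]}$ via Hilbert reciprocity, and that uniqueness follows by weak approximation ($F^\times$ dense in $F_S^\times$). Both are standard and the paper's terseness is defensible, but your write-up is more self-contained.

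One small point worth emphasizing in your own presentation: your reduction ``$\gamma_v(Q)=1$ at $v \notin S$'' is exactly what the paper's Lemma \ref{lem:nonarch:Gaussian} supplies when specialized to $t \in \OO_v^\times$, so you may as well cite that directly rather than re-deriving it; this also makes the consistency of your Hilbert-symbol formula with the paper's Legendre-symbol formula visible at a glance.
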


\begin{proof}
This is clear from Lemma \ref{lem:nonarch:Gaussian}.
\end{proof}

For a vector space $W$ and $w \in W$ let $\delta_w$ be the $\delta$-symbol defined as in \eqref{deltax}.  If $\chi$ is a character let 
$$
\delta_{\chi}:=\begin{cases} 1 &\textrm{ if } \chi \textrm{ is trivial, and }\\
0 & \textrm{ otherwise.}\end{cases}
$$
For quasi-characters $\chi:F^\times \backslash \A_F^\times \to \CC^\times$ let
$$
\mathcal{C}(\chi):=|\OO/\mathfrak{f}_{\chi}|\prod_{v|\infty}\mathcal{C}(\chi_v)
$$
be the analytic conductor of $\chi$, where $\mathfrak{f}_\chi \subset \OO$ is the usual conductor of $\chi$ and $\mathcal{C}(\chi_v)$ is the archimedian conductor at the infinite place $v$ defined as in \eqref{an:cond}.

The following is the main theorem of this subsection, which amounts to collecting the work of \S \ref{sec:arch:bound}, \S \ref{sec:nonarch:bound} and \S \ref{sec:unram:comp}: 
\begin{thm} \label{thm:global:bound}Let $\chi \in \widehat{[\GG_m]}$ and let $s \in \CC$. 
For $\xi \in V(F)$ the integral $\mathcal{I}(\Phi,\chi_s)(\xi)$ admits a meromorphic continuation to $\mathrm{Re}(s)>-n/2$.  For any $N >0$ and $\sigma_1>\sigma_2>-n/2$ one has
\begin{align*}
\frac{\mathcal{I}(\Phi,\chi_s)(\xi)}{s^{\delta_{\xi}\delta_{\chi}}(s+1)^{\delta_{\xi}\delta_{\chi}}(s+n/2-1)^{\delta_{Q(\xi)}\delta_{\mathcal{G}\chi}}} \ll_{\sigma_1,\sigma_2,N}\mathcal{C}(\chi_s)^{-N}\max(|\xi|,1)^{-N}
\end{align*}
for $\sigma_1>\mathrm{Re}(s)>\sigma_2$.
There is an element $\beta \in \OO \cap F^\times$ and an ideal $\mathfrak{f} \subset \OO$ such that 
$\mathcal{I}(\Phi,\chi_s)(\xi)$ vanishes unless
  $\xi \in \beta^{-1}V(\OO)$ and the conductor of $\chi$ divides $\mathfrak{f}$.
\end{thm}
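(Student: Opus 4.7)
The plan is to exploit the Euler factorization
\[
\mathcal{I}(\Phi,\chi_s)(\xi) = \mathcal{I}_S(\Phi,\chi_s)(\xi) \cdot \mathcal{I}^S(\Phi,\chi_s)(\xi)
\]
afforded by the stated decomposition of $\Phi$ over the bad set $S$. Each of the three conclusions---meromorphic continuation to $\mathrm{Re}(s) > -n/2$, the vertical-strip bound, and the support statement---will be proved by combining the explicit computation at places outside $S$ (\S \ref{sec:unram:comp}) with the uniform bounds at the places in $S$ (\S \ref{sec:arch:bound} archimedean, \S \ref{sec:nonarch:bound} nonarchimedean).

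I would first treat the unramified factor $\mathcal{I}^S(\Phi,\chi_s)(\xi)$ using the identity from \S \ref{sec:unram:comp}: subject to a dual-form integrality condition on $\xi$,
\[
\mathcal{I}^S(\Phi,\chi_s)(\xi) = \frac{L^S(s+n/2,\chi\mathcal{G})}{L^S(s+1+n/2,\chi\mathcal{G})} \sum_{d \mid \xi V(\OO^S)} \frac{\chi^S(d)}{|d|_S^{s+1}},
\]
and outside that locus it is entire. Meromorphic continuation on $\CC$ is then Hecke's theorem applied to the Gr\"o{\ss}encharakter $\chi\mathcal{G}$. The simple pole of $L^S(s+n/2,\chi\mathcal{G})$ at $s=1-n/2$, present exactly when $\chi\mathcal{G}$ is trivial and the dual form vanishes at $\xi$, accounts for the $(s+n/2-1)^{\delta_{Q(\xi)}\delta_{\mathcal{G}\chi}}$ denominator. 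When $\xi = 0$ the divisor sum degenerates to a partial Dedekind zeta, which together with the Mellin-transformed archimedean integrand accounts for the simple poles at $s=0$ and $s=-1$ encoded in the $\delta_\xi \delta_\chi$ exponents. Standard convexity bounds for Hecke $L$-functions and elementary bounds on the finite divisor sum then give polynomial growth of the corresponding quotient in vertical strips, jointly in $|\mathrm{Im}(s)|$ and in the finite part of $\mathcal{C}(\chi)$.

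For the ramified factor $\mathcal{I}_S(\Phi,\chi_s)(\xi)$ I would feed in the local bounds of \S \ref{sec:arch:bound} and \S \ref{sec:nonarch:bound}: at each $v \in S$ the local integral extends meromorphically to $\mathrm{Re}(s) > -n/2$ with the poles attributed to it above, and once those are removed it decays faster than any polynomial in the product of the local analytic conductor with $\max(|\xi|_v,1)$, uniformly in vertical strips. At the finite places of $S$ this decay is in fact a vanishing beyond finite thresholds: compact support of $\Phi_v$ forces compact support of the transforms arising from the $w$- and $t$-integrals, both in $\xi$ and in the local ramification data of $\chi$. Collecting these thresholds over the finite places of $S$ produces the element $\beta \in \OO \cap F^\times$ confining $\xi$ to $\beta^{-1}V(\OO)$ and the conductor ideal $\mathfrak{f} \subset \OO$ bounding the conductor of $\chi$, as claimed.

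Multiplying the two factors, the theorem's denominator exactly cancels the combined poles on $\mathrm{Re}(s) > -n/2$, and the advertised bound is the product of polynomial growth of $\mathcal{I}^S$ against super-polynomial decay of $\mathcal{I}_S$. The main obstacle, and the reason \S \ref{sec:arch:bound} is by far the most technical portion of the paper, will be establishing the archimedean bound with full joint uniformity in $\xi$ and in the analytic conductor of $\chi_s$: stationary phase in the $w$-integral must be carried out so that the resulting oscillatory $t$-integral, Mellin-transformed against a possibly highly ramified archimedean character, still yields errors of the required decay in all three parameters simultaneously. The pole matching, the Dirichlet-series manipulations, and the nonarchimedean bounds are essentially bookkeeping once the local results are in hand.
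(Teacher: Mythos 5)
Your plan follows the same Euler-factorization strategy as the paper and is essentially correct for $\xi\neq 0$, where one indeed multiplies the estimate of Theorem \ref{thm:gammaneq0} at archimedean places, the estimate of Theorem \ref{thm:nonarch:bound} at finite places in $S$, the formula of Theorem \ref{thm:nonarch:comp} outside $S$, and a preconvex bound for $L^S(s+n/2,\chi\mathcal{G})$. The support statement comes out of Lemma \ref{lem:nonarch:support} as you say. The divergence is in the $\xi=0$ case in the range $\mathrm{Re}(s)\le -3/4$.

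There you claim that ``at each $v\in S$ the local integral extends meromorphically to $\mathrm{Re}(s)>-n/2$ with the poles attributed to it above, and once those are removed it decays faster than any polynomial\ldots uniformly in vertical strips,'' and propose to multiply that super-polynomial decay against convexity bounds for the unramified factor. But Theorem \ref{thm:gammaeq0} does not deliver this for the archimedean factor: for $-1/4\geq\mathrm{Re}(s)>\sigma_2$ it only bounds the combination $\gamma(\chi_{s+1},\psi)\,\mathcal{I}_\infty(\Phi,\chi_s)(0)$, not $\mathcal{I}_\infty(\Phi,\chi_s)(0)$ on its own. The bare archimedean factor inherits poles at the zeros of $\gamma(\chi_{s+1},\psi)$ (i.e.\ at the poles of $L(s+1,\chi_\infty)$, of which $s=-1$ for trivial $\chi$ is only the first), and its growth in $\mathcal{C}(\chi_s)$ is tied to $|\gamma|^{-1}$ in a way your outline does not control. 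The paper resolves this by applying the functional equation $L^S(s+1,\chi)=\gamma(s+1,\chi_S,\psi_S)L^S(-s,\bar\chi)$ to the unramified Dirichlet-series factor; this transfers precisely the local $\gamma$-factor at the archimedean place onto $\mathcal{I}_\infty$, producing the controlled product $\gamma(s+1,\chi_\infty,\psi_\infty)\mathcal{I}_\infty(\Phi,\chi_s)(0)$, while the residual $L^S(-s,\bar\chi)$ is estimated by preconvexity after removing its pole at $s=-1$ (which is how the $(s+1)^{\delta_\xi\delta_\chi}$ factor actually arises). Without this explicit use of the functional equation, the claim that the Euler factors separately have the right growth in the strip $-n/2<\mathrm{Re}(s)\leq -3/4$ is not supported by the paper's local theorems, and the naive product of bounds does not go through. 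This is the missing step in your plan; the rest is accurate bookkeeping.
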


\begin{proof}
Assume first that $\xi \neq 0$.  Let $A>0$ be as in Theorem \ref{thm:gammaneq0} and let $N>0$.
By Theorem \ref{thm:gammaneq0}, Theorem \ref{thm:nonarch:bound} and Theorem \ref{thm:nonarch:comp} we see that
\begin{align*}
&\frac{\mathcal{I}(\Phi,\chi_s)(\xi)}{L(s+n/2,\chi\mathcal{G}^\infty)^{\delta_{Q(\xi)}}}
\\
&\ll_{\Phi,\sigma_1,\sigma_2,N}\mathcal{C}(\chi_{s})^{-N}\left(\prod_{v|\infty}\max(|\xi|_v,1)^{-N}\min(|\xi|_v,1)^{-A}\right)\left(\prod_{v\nmid \infty}\max (1,|\xi|_v^{1-n/2})\right).\end{align*}
Moreover, $\mathcal{I}(\Phi,\chi_s)(\xi)$ vanishes unless $ \xi \in \beta^{-1}V(\OO)$ and the conductor of $\chi$ divides $\mathfrak{f}$ for some $\beta \in \OO \cap F^\times$ and ideal $\mathfrak{f} \subseteq \OO$ by Lemma \ref{lem:nonarch:support}.  For the remainder of the $\xi \neq 0$ case of the proof we assume the conductor of $\chi$ divides $\mathfrak{f}$.

Using \cite[Lemma 3.4]{Getz:RSMonoid} to handle the factor involving $A$ and absorbing the factor corresponding to $v \nmid \infty$ we deduce that for any $N>0$ one has 
\begin{align*}
&\frac{\mathcal{I}(\Phi,\chi_s)(\xi)}{L^\infty(s+n/2,\chi\mathcal{G})}
\ll_{\Phi,\sigma_1,\sigma_2,N}\mathcal{C}(\chi_{ s})^{-N}\prod_{v|\infty}\max(|\xi|_v,1)^{-N}.
\end{align*}
Recall that for sufficiently large $A>0$
and $-\frac{1}{2} \leq \mathrm{Re}(s) \leq \frac{3}{2}$
 one has a preconvex bound
$$
(s-1)^{\delta_{\chi\mathcal{G}}}L^\infty(s,\chi\mathcal{G}) \ll_{\mathfrak{f}} \mathcal{C}(\chi\mathcal{G}_{s})^A \ll_Q \mathcal{C}(\chi_{ s})^A
$$
for any character $\chi \in \widehat{[\GG_m]}$ of conductor dividing $\mathfrak{f}$
\cite[\S III.6, Theorem 14A]{Moreno:Advanced:analytic:number:theory}.   Thus we deduce the theorem provided that $\xi \neq 0$.

Assume now that $\xi =0$.  By our choice of $S$ and Theorem \ref{thm:nonarch:comp} one has
\begin{align*}
\mathcal{I}(\Phi,\chi_s)(0)=\frac{\mathcal{I}_S(\Phi,\chi_s)(0)L^S(s+1,\chi)L^S(s+n/2,\chi\mathcal{G})}{L^S(s+n/2+1,\chi\mathcal{G})}.
\end{align*}
The factor $L^S(s+n/2+1,\chi\mathcal{G})$ converges absolutely in plane $\mathrm{Re}(s)>-n/2$ and is bounded independently of $\chi$ in that region, so for the purposes of this proof we can ignore this factor.  We also note that by Lemma \ref{lem:nonarch:support} there is an ideal $\mathfrak{f} \subseteq \OO$ depending on $\Phi$ such that  $\mathcal{I}(\Phi,\chi_s)(0)$ vanishes unless the conductor of $\chi$ divides $\mathfrak{f}$.  We assume for the remainder of the proof that the conductor of $\chi$ divides $\mathfrak{f}$.  

For $\frac{3}{2} \geq \mathrm{Re}(s)>\tfrac{1}{4}$ and sufficiently large $A>0$ one has a preconvex bound 
\begin{align} \label{preconvex}
(s-1)^{\delta_{\chi}}L^S(s,\chi) \ll \mathcal{C}(\chi_{ s})^{A}
\end{align}
\cite[\S III.6, Theorem 14A]{Moreno:Advanced:analytic:number:theory}.  Combining this with Theorem \ref{thm:gammaeq0} to handle the $\mathcal{I}_{\infty}(\Phi,\chi_s)$ factor and Theorem \ref{thm:nonarch:bound} to handle the $\mathcal{I}_{S \setminus \infty}(\Phi,\chi_s)$ factor we deduce the theorem in the range $\mathrm{Re}(s)>-\frac{3}{4}$.

We now use the functional equation of $L^S(s,\chi)$ to proceed.  Let
$$
\gamma(s,\chi_S,\psi_S):=\prod_{v \in S}\gamma(s,\chi_v,\psi_v)
$$
where 
$$
\gamma(s,\chi_v,\psi_v)=\frac{L(1-s,\bar{\chi}_v)\varepsilon(s,\chi_v,\psi_v)}{L(s,\chi_v)}
$$
is the usual $\gamma$-factor.  

One then has the functional equation 
\begin{align} \label{functional:eq}
L^S(s,\chi)=\gamma(s,\chi_S,\psi_S)L^S(1-s,\overline{\chi}).
\end{align}
Assume for the remainder of the proof that $-\frac{1}{2} \geq \mathrm{Re}(s)> \sigma_2$.  

We then use \eqref{functional:eq} to write
\begin{align} \label{smaller:s}
\mathcal{I}(\Phi,\chi_s)(0)=\frac{\gamma(s+1,\chi_S,\psi_S)\mathcal{I}_S(\Phi,\chi_s)(0)L^S(-s,\bar{\chi})L^S(s+n/2,\chi)}{L^S(s+n/2+1,\chi)}.
\end{align}
For $-\frac{1}{2} \geq \mathrm{Re}(s)>\sigma_2$ the denominator here is bounded independently of $\chi$ and one has a bound of 
$$
\gamma(s+1,\chi_\infty,\psi_\infty)\mathcal{I}_\infty(\Phi,\chi_{s})(0)\ll_{\Phi,N,\sigma_2}\mathcal{C}(\chi_{ s})^{-N}
$$
by Theorem \ref{thm:gammaeq0}.  The corresponding factor at the places in $S\setminus \infty$ is $\gamma(s+1,\chi_S^\infty,\psi_S^\infty)\mathcal{I}_{S \setminus \infty}(\Phi,\chi_{s})(0)$, which is bounded by a constant depending on $\mathfrak{f}$ and $\sigma_2$ by trivial bounds on the $\gamma$-factor and Theorem \ref{thm:nonarch:bound}.  
Finally, for sufficiently large $A>0$ one has
\begin{align*}
(s+n/2-1)^{\delta_{\chi\mathcal{G}}}L^S(s+n/2,\chi\mathcal{G}) \ll_{\mathfrak{f},\sigma_2} \mathcal{C}(\chi_{s})^{A}\\
(s+1)^{\delta_{\chi}}L^S(-s,\bar{\chi})\ll_{\mathfrak{f},\sigma_2}\mathcal{C}(\chi_{ s})^{A}
\end{align*}
by the preconvex bound \eqref{preconvex} together with trivial bounds at the places in $S \setminus \infty$.  Combining these bounds we deduce the requisite bound on \eqref{smaller:s} (in the range $-\frac{1}{2}  \geq \mathrm{Re}(s)>\sigma_2$). 
\end{proof}

\subsection{The main theorem}
 \label{ssec:main}

Let 
\begin{align}
\Delta:\RR_{>0} &\lto F_{\infty}^\times
\end{align}
be the map that sends $X$ to $X^{[F:\QQ]^{-1}}$ embedded diagonally.  Embedding $F_{\infty}^\times$ diagonally into $V(F_\infty)$ we obtain
\begin{align}
\Delta:\RR_{>0} \lto V(F_\infty).
\end{align}

Let $f \in \mathcal{S}(V(\A_F))$.
Our goal is to estimate
\begin{align} \label{to:estimate}
N(X)=\sum_{\substack{\xi \in V(F): Q(\xi)=0}} f\left(\frac{\xi}{\Delta(X)} \right).
\end{align}
Let $\Gamma_{F\infty}(s):=\prod_{v|\infty}\Gamma_{F_v}(s)$, where
\begin{align} \label{Gamma:def}
\Gamma_{F_v}(s)=\begin{cases} \pi^{-s/2}\Gamma(s/2)& \textrm{ if }F_v \textrm{ is real, and} \\ 2(2\pi)^{-s}\Gamma(s) & \textrm{ if }F_v \textrm{ is complex.} \end{cases}
\end{align}

Let $\Phi_0 \in \mathcal{S}(\A_F^2)$ be such that $\Phi_0(t,0)=0$ for all $t \in \A_F$ and $\mathcal{F}_2(\Phi_0)(0,0) =\Gamma_{F\infty}(1)$, where
$$
\mathcal{F}_2(\Phi_0)(x,y):=\int_{\A_F} \Phi_0(x,t)\psi(yt)dt
$$
is the Fourier transform in the second variable.
For $\Phi \in \mathcal{S}(\A_F^2 \times V(\A_F))$ we define
$$
\Phi^{\mathrm{sw}}(x,y,w)=\Phi(y,x,w).
$$
Let 
\begin{align}
\ell_{F}:=\mathrm{Res}_{s=1}\Lambda_F(s)
\end{align}
where $\Lambda_F(s)=\Gamma_F(s)\zeta_F(s)$ is the completed Dedekind $\zeta$-function of $F$.
We are now in a position to state the main theorem of this paper, the extension of Theorem \ref{thm:intro} to arbitrary number fields:
\begin{thm} \label{thm:main} Let $\varepsilon>0$.  Assume that $n$ is even.  If $n>4$ or $n=4$ and  $(-1)^{n/2}\det J =\det J \not \in (F^\times)^2$
then 
\begin{align*}
N(X)&=\frac{X^{n-2}}{\ell_F}\mathrm{Res}_{s=-1}\left(\mathcal{I}(\Phi,1_s)(0)-\mathcal{I}(\Phi^{\mathrm{sw}},1_s)(0)\right)
\\&+\sum_{\substack{\xi \in V(F)\\ Q^\vee(\xi)=0}}\frac{X^{n/2}}{\ell_F}\mathrm{Res}_{s=1-n/2}\left(\mathcal{I}(\Phi,\mathcal{G}_s)(\xi)-\mathcal{I}(\Phi^{\mathrm{sw}},\mathcal{G}_s)(\xi) \right)+O_\varepsilon(X^{n/2+\varepsilon-1})
\end{align*}
If $n=4$ and $\det J \in (F^\times)^2$ then
\begin{align*}
N(X)&=X^2 \log X \ell_F \lim_{s \to -1}\left(\frac{\mathcal{I}(\Phi,1_s)(0)-\mathcal{I}(\Phi^{\mathrm{sw}},1_s)(0)}{\Lambda_F(s+2)^2}\right)\\&+
\frac{X^2}{\ell_F}\mathrm{Res}_{s=-1}\left(\mathcal{I}(\Phi,1_s)(0)-\mathcal{I}(\Phi^{\mathrm{sw}},1_s)(0)\right)\\
&+\sum_{\substack{\xi \in V(F) - 0\\ Q^\vee(\xi)=0}}\frac{X^{2}}{\ell_F}\mathrm{Res}_{s=-1}\left(\mathcal{I}(\Phi,\mathcal{G}_s)(\xi)-\mathcal{I}(\Phi^{\mathrm{sw}},\mathcal{G}_s)(\xi) \right)+O_\varepsilon(X^{1+\varepsilon}).
\end{align*}

\end{thm}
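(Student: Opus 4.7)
\medskip

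\noindent\textbf{Proof proposal.} The plan is the one outlined in \S\ref{ssec:sketch}, executed adelically and with the bookkeeping needed to track the secondary pole. We proceed in three phases: setup via the $\delta$-symbol expansion, a contour shift, and identification of residues.

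\emph{Phase 1 (setup).} First, apply \cite[Proposition 2.1]{Getz:RSMonoid} to replace the condition $Q(\xi)=0$ in \eqref{to:estimate} by an adelic $\delta$-symbol, producing a double sum over $\xi \in V(F)$ and $d \in F^\times$ of an integrand built from $\Phi_0$, $\Phi_0^{\mathrm{sw}}$ and $f$, with a prefactor $c_{X,\Phi_0}/X$. By \eqref{c:func} the factor $c_{X,\Phi_0}$ differs from $1$ by an error of arbitrarily fast polynomial decay in $X$, which is negligible for our asymptotic. Next apply Poisson summation in $\xi$ on $V(F) \subset V(\A_F)$ (with the self-dual Haar measure from \S\ref{sec:notation}) and Mellin inversion on the $d$-sum, expanding over characters of $[\GG_m]$. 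After rearranging one arrives at
\begin{align*}
N(X) = \frac{X^{n-1}}{2\pi i \,\ell_F} \sum_{\xi \in V(F)} \sum_{\chi \in \widehat{[\GG_m]}} \int_{\mathrm{Re}(s)=\sigma} X^{s}\bigl(\mathcal{I}(\Phi,\chi_s)(\xi) - \mathcal{I}(\Phi^{\mathrm{sw}},\chi_s)(\xi)\bigr)\,ds + O_N(X^{-N})
\end{align*}
valid initially for $\sigma > 1$, where absolute convergence is given by the trivial bound on $\mathcal{I}(\Phi,\chi_s)(\xi)$.

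\emph{Phase 2 (contour shift).} Shift the contour from $\mathrm{Re}(s) = \sigma$ down to $\mathrm{Re}(s) = -n/2 + 1 + \varepsilon$ (a line strictly inside the half-plane of meromorphic continuation provided by Theorem \ref{thm:global:bound}). The bounds in that theorem, together with the polynomial decay $\mathcal{C}(\chi_s)^{-N}$ in the conductor and $\max(|\xi|,1)^{-N}$ in $\xi$, justify absolute convergence of the sums over $\chi$ and $\xi$ on the shifted line and on the horizontal connectors. The support statement of Theorem \ref{thm:global:bound} (vanishing unless $\xi \in \beta^{-1}V(\OO)$ and $\mathrm{cond}(\chi)\mid\mathfrak{f}$) ensures that the sums on $\chi$ and $\xi$ are effectively over a lattice and a congruence subgroup of characters, so interchanging sum and integral is legitimate. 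The remaining integral on the shifted line is bounded by $O(X^{n/2-1+\varepsilon})$ using the preconvex bounds on Hecke $L$-functions cited in the proof of Theorem \ref{thm:global:bound} together with the rapid decay in $\mathcal{C}(\chi_s)$; this yields the error term in Theorem \ref{thm:main}.

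\emph{Phase 3 (residues).} Identify the poles crossed. By Theorem \ref{thm:global:bound} the relevant singularities are at $s=0$ and $s=-1$ when $\chi$ is trivial and $\xi=0$ (from the $L^S(s,\chi)$ and $L^S(s+1,\chi)$ factors in the unramified computation of \S\ref{sec:unram:comp}) and at $s=1-n/2$ when $\chi\mathcal{G}$ is trivial and $Q^\vee(\xi)=0$ (from $L^S(s+n/2,\chi\mathcal{G})$). The residue at $s=0$ of $\mathcal{I}(\Phi,1_s)(0)-\mathcal{I}(\Phi^{\mathrm{sw}},1_s)(0)$ vanishes, a cancellation that is the hallmark of the $\delta$-symbol method: it reflects the condition $\Phi_0(t,0)=0$ and $\mathcal{F}_2(\Phi_0)(0,0) = \Gamma_{F\infty}(1)$ on $\Phi_0$. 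The residue at $s=-1$ in the $\chi=1$, $\xi=0$ term gives the main term $X^{n-2}\ell_F^{-1}\mathrm{Res}_{s=-1}(\mathcal{I}(\Phi,1_s)(0)-\mathcal{I}(\Phi^{\mathrm{sw}},1_s)(0))$, while the residue at $s=1-n/2$ in the $\chi=\mathcal{G}$ terms produces the sum over $\xi$ with $Q^\vee(\xi)=0$. When $n>4$ these poles are distinct and yield the first assertion. When $n=4$ and $\det J \in (F^\times)^2$ the character $\mathcal{G}$ is trivial and the locations $s=-1$ and $s=1-n/2$ coincide, so the two $L$-factors collide to produce a double pole at $s=-1$ of $L^S(s+1,1)L^S(s+n/2,\mathcal{G})=L^S(s+1,1)^2$. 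Expanding $X^s=X^{-1}(1+(s+1)\log X+\cdots)$ around the double pole and using $\mathrm{Res}_{s=1}\Lambda_F(s)=\ell_F$ produces both the $X^2\log X$ leading term (via the coefficient of $(s+1)^{-2}$) and the $X^2$ correction (via the coefficient of $(s+1)^{-1}$), matching the stated formula.

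\emph{Main obstacle.} The technical heart is not the residue bookkeeping but the uniform control needed to shift the contour past the critical line $\mathrm{Re}(s)=-n/2+1$ while simultaneously summing over $\xi$ and $\chi$. This requires simultaneous decay in $|\xi|$, in the conductor $\mathcal{C}(\chi_s)$, and in $|\mathrm{Im}(s)|$; the local analyses of \S\ref{sec:arch:bound}--\S\ref{sec:unram:comp} were set up precisely to supply this, and the $n=4$ boundary case is the most delicate because the shifted line can be brought only marginally below the double pole.
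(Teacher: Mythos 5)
Your proposal matches the paper's strategy closely, but there are two concrete errors you should fix.

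\textbf{The contour location.} In Phase 2 you shift to $\mathrm{Re}(s)=-n/2+1+\varepsilon$. This line lies to the \emph{right} of the pole at $s=1-n/2=-n/2+1$, so the shift does not actually cross the pole whose residue is supposed to produce the secondary term in Phase 3. Moreover, with a prefactor of $X^{n-1}$, the integral on the line $\mathrm{Re}(s)=-n/2+1+\varepsilon$ gives $O(X^{n/2+\varepsilon})$, not the claimed $O(X^{n/2-1+\varepsilon})$. The correct line is $\mathrm{Re}(s)=\varepsilon-n/2$ (as in the paper). This lies just inside the half-plane $\mathrm{Re}(s)>-n/2$ of Theorem \ref{thm:global:bound}, does cross the pole at $s=1-n/2$, and yields $X^{n-1}\cdot X^{\varepsilon-n/2}=X^{n/2-1+\varepsilon}$ for the error term. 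Your Phase 3 is inconsistent with Phase 2 as written, so this appears to be a slip rather than a structural misunderstanding, but it is a genuine error that would break the argument if taken literally.

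\textbf{Why the $s=0$ residue vanishes.} You attribute the vanishing of $\mathrm{Res}_{s=0}\bigl(\mathcal{I}(\Phi,1_s)(0)-\mathcal{I}(\Phi^{\mathrm{sw}},1_s)(0)\bigr)$ to the normalizations $\Phi_0(t,0)=0$ and $\mathcal{F}_2(\Phi_0)(0,0)=\Gamma_{F\infty}(1)$. Those hypotheses are what make the $\delta$-symbol expansion of \cite{Getz:RSMonoid} correct in the first place; they are not the mechanism behind the cancellation at $s=0$. The actual cancellation is the content of Lemma \ref{lem:iszero} and comes from the change-of-variables symmetry $t\mapsto Q(v)t$ followed by $t\mapsto t^{-1}$, which converts $\mathcal{I}_S(\Phi^{\mathrm{sw}},1_0)(0)$ into $\mathcal{I}_S(\Phi,1_0)(0)$ term by term (the unramified factors outside $S$ being already equal by Theorem \ref{thm:nonarch:comp}). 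As written your explanation is incorrect; you should replace it by this change-of-variables argument. The rest of Phase 3, including the double-pole analysis via $X^s=X^{-1}(1+(s+1)\log X+\cdots)$ and the identification of the $\log X$ coefficient in terms of $\Lambda_F(s+2)^2$, agrees with the paper and is correct.
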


Before embarking on the proof we stop to emphasize that the terms occurring in this expression are actually quite simple, at least outside of the finite set of places $S$.  Assume that $Q^\vee(\xi)=0$.
Then for any $\chi \in \widehat{[\GG_m]}$ the integral $\mathcal{I}^S(\Phi,\chi_s)$ vanishes if $\chi$ is ramified outside of $S$.  If it is unramified outside of $S$ then
$$
\mathcal{I}^S(\Phi,\chi_s)(\xi)=\frac{L^S(s+n/2,\chi\mathcal{G})}{L^S(s+n/2+1,\chi\mathcal{G})}\sum_{d}\one_{dV(\widehat{\OO}^S)}(\xi)\chi_{s+1}(d)
$$
where the sum on $d$ is over the ideals of $\OO^S$.  Here we identify $d$ with a generator (in $\A_F^{S \times}$) of the ideal $d\widehat{\OO}^S$ (see Theorem \ref{thm:nonarch:comp}).  If $\xi=0$ the sum over $d$ becomes $L^S(s+1,\chi)$.

Thus if  $\xi \neq 0$ (for example) then
$$
\frac{1}{\ell_F}\mathrm{Res}_{s=1-n/2}\mathcal{I}(\Phi,\mathcal{G}_s)(\xi)=\frac{\mathcal{I}_S(\Phi,\mathcal{G}_s)(\xi)}{\Lambda_{FS}(1)L^S(2,\mathcal{G})}\sum_{d}\one_{dV(\widehat{\OO}^S)}(\xi)(|d|^{S})^{2-n/2}.
$$
The situation when $\xi=0$, $\mathcal{G}$ is trivial, and $n=4$ is more complicated, but it is still easy to compute the factors outside of $S$ in this case using \eqref{smaller:s}.

As promised after the statement of Theorem \ref{thm:intro}, we now explain why the expression above simplifies in the setting of the introduction.  Recall that in the introduction we restricted to the case $F=\QQ$ and $\Phi^\infty=\one_{\widehat{\ZZ}^2 \times V(\widehat{\ZZ})}$.  
The function
\begin{align*}
\mathcal{I}^\infty(\one_{\widehat{\ZZ}^2 \times V(\widehat{\ZZ})},\chi_s)
\end{align*}
vanishes unless $\chi$ is unramified at every finite place, and $A_{\GG_m} \QQ^\times \backslash \A_\QQ^\times/\widehat{\ZZ}^\times=1$.  Thus $\mathcal{I}(\one_{\widehat{\ZZ}^2 \times V(\widehat{\ZZ})},\mathcal{G}_s)=0$ when $\mathcal{G}$ is nontrivial, which is to say $(-1)^{n/2}\det J$ is not a square in $\QQ^\times$.   However, even over $\QQ$, if we take $\Phi^\infty$ to be a function that picks out $\xi$ in a particular residue class modulo $V(\ZZ)$, there is no reason to expect that the extra terms in Theorem \ref{thm:main} are nonzero.  This phenomenon (when $F=\QQ$ and $n=4$) is the subject of the paper \cite{Lindqvist:weak:approx}.

Before proving the theorem we prove a lemma:
\begin{lem} \label{lem:iszero}
One has
\begin{align*}
\mathrm{Res}_{s=0}X^s\left(\mathcal{I}(\Phi,1_s)(0)-\mathcal{I}(\Phi^{\mathrm{sw}},1_s)(0) \right)=0.
\end{align*}
\end{lem}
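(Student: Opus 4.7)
The plan is to exploit the Euler product factorization of $\mathcal{I}(\Phi,1_s)(0)$ together with a local change of variables $t \mapsto Q(w)/t$. Since $\Phi^S = \one_{\widehat{\OO}^{S2}} \otimes \one_{V(\widehat{\OO}^S)}$ is symmetric in its first two arguments, we have $\mathcal{I}^S(\Phi,1_s)(0) = \mathcal{I}^S(\Phi^{\mathrm{sw}},1_s)(0)$, so the difference factors as
$$\mathcal{I}(\Phi,1_s)(0) - \mathcal{I}(\Phi^{\mathrm{sw}},1_s)(0) = D(s)\,\mathcal{I}^S(\Phi,1_s)(0), \quad D(s) := \mathcal{I}_S(\Phi,1_s)(0) - \mathcal{I}_S(\Phi^{\mathrm{sw}},1_s)(0).$$
From the formula $\mathcal{I}^S(\Phi,1_s)(0) = \zeta_F^S(s+1)L^S(s+n/2,\mathcal{G})/L^S(s+n/2+1,\mathcal{G})$ used in the proof of Theorem \ref{thm:global:bound}, this factor has a simple pole at $s=0$ coming from $\zeta_F^S(s+1)$, while the remaining $L$-factors are holomorphic and nonzero there since $n \geq 4$.

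The finite product $\mathcal{I}_S(\Phi,1_s)(0)$ is holomorphic in a neighborhood of $s=0$: at a nonarchimedean $v \in S$, the hypothesis $\Phi_0(t,0) = 0$ combined with local constancy of $\Phi_{0v}$ forces $\Phi_{0v}$ to vanish on a neighborhood of $(0,0)$, removing any potential trouble near $t=0$; at an archimedean $v \in S$, the same vanishing (to first order in $t$) plus Schwartz decay in $\Phi_v$ yields absolute convergence for $\mathrm{Re}(s) > -1$. The identical analysis applies to the swapped version, so $D(s)$ is holomorphic at $s=0$.

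The crucial identity is $D(0) = 0$, which is established place-by-place. Fix $v \in S$ and apply the substitution $t \mapsto Q(w)/t$, a measure-preserving bijection of $F_v^\times$ for each $w$ with $Q(w) \neq 0$. Since $\{w : Q(w) = 0\}$ has measure zero in $V(F_v)$ it contributes nothing to the integral, and under this change of variables
$$\Phi_v(Q(w)/t,t,w)|t|_v^s \;\longmapsto\; \Phi_v(t,Q(w)/t,w)|Q(w)|_v^s |t|_v^{-s} = \Phi_v^{\mathrm{sw}}(Q(w)/t,t,w)|Q(w)|_v^s|t|_v^{-s}.$$
At $s=0$ the two extra factors collapse to $1$, giving $\mathcal{I}_v(\Phi,1_0)(0) = \mathcal{I}_v(\Phi^{\mathrm{sw}},1_0)(0)$. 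Multiplying over $v \in S$ yields $D(0) = 0$.

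Combining the three observations, $D(s)\mathcal{I}^S(\Phi,1_s)(0)$ is a product of a function that is holomorphic and vanishing at $s=0$ with one that has only a simple pole there, hence extends holomorphically across $s=0$; multiplying by the entire function $X^s$ preserves this. Therefore the residue vanishes. The main obstacle is the rigorous justification of the local change of variables and the absolute convergence of $\mathcal{I}_v(\Phi,1_s)(0)$ at $s=0$—both of which rely essentially on the normalization $\Phi_0(t,0) = 0$, a condition imposed precisely to kill the boundary contribution of the Tate-type zeta integral at the relevant place.
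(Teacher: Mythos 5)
Your proof takes the same essential route as the paper: factor $\mathcal{I} = \mathcal{I}_S\,\mathcal{I}^S$, observe that the pole at $s=0$ lives entirely in the outside factor $\mathcal{I}^S$ (and is the same for $\Phi$ and $\Phi^{\mathrm{sw}}$), and kill the residue by showing $\mathcal{I}_S(\Phi,1_0)(0)=\mathcal{I}_S(\Phi^{\mathrm{sw}},1_0)(0)$ via a $t$-substitution. Your single substitution $t\mapsto Q(w)/t$ is the composition of the paper's two changes of variables $t\mapsto Q(v)t$, $t\mapsto t^{-1}$, so that part is identical in substance.

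The one defect is the last paragraph, where you attribute the holomorphy of $\mathcal{I}_S(\Phi,1_s)(0)$ near $s=0$ (and the justification of the substitution) to the normalization $\Phi_0(t,0)=0$. This is a misattribution, and the place-by-place reasoning you sketch doesn't actually follow from the hypothesis. The condition $\Phi_0(t,0)=0$ is an \emph{adelic} condition on $\Phi_0=\Phi_{0S}\otimes\one_{(\widehat{\OO}^S)^2}$: writing $\Phi_{0S}=\prod_{v\in S}\Phi_{0v}$, the vanishing of the product $\prod_v\Phi_{0v}(t_v,0)$ for all $(t_v)$ forces vanishing at \emph{some} $v\in S$, not at every $v$, so you cannot conclude that each $\Phi_{0v}$ vanishes near $\{y=0\}$. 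Moreover, this hypothesis plays no role in the local convergence; it is imposed in order for the $\delta$-symbol expansion of \cite[Proposition 2.1]{Getz:RSMonoid} to hold. The correct (and unconditional) justification for absolute convergence of the local integrals at $s=0$ is already in the paper's local results: Theorem~\ref{thm:gammaeq0} gives convergence of the archimedean factor for $\mathrm{Re}(s)>-3/4$, and Theorem~\ref{thm:nonarch:bound} shows $\mathcal{I}_v(\Phi,\chi_s)(0)/L_v(s+1,\chi)$ is holomorphic for $\mathrm{Re}(s)>-n/2$ (and $L_v(1,1)$ is finite and nonzero), with no hypothesis on $\Phi_0$. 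Replacing your appeal to $\Phi_0(t,0)=0$ with a citation to those two theorems repairs the argument and brings it into line with the paper's own proof.
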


\begin{proof} 
By Theorem \ref{thm:nonarch:comp} one has 
$$
\mathcal{I}(\one_{(\widehat{\OO}^S)^2 \times V(\widehat{\OO}^S)},1_s)(0)=\frac{L^S(s+n/2,\mathcal{G})\zeta^S_F(s+1)}{L^S(s+n/2+1,\mathcal{G})}.
$$
Thus it suffices to verify that 
\begin{align} \label{equal:0}
\mathcal{I}(\Phi_S,1_0)(0)=\mathcal{I}(\Phi_S^{\mathrm{sw}},1_0)(0).
\end{align}
Taking changes of variables $t \mapsto Q(v)t$ and then $t \mapsto t^{-1}$ we see that 
\begin{align*}
\mathcal{I}_S(\Phi^{\mathrm{sw}},1_0)(0)
&=\int_{F_S^\times \times V(F_S)}\Phi_S\left(t,\frac{Q(v)}{t},v\right)dvdt^\times\\
&=\int_{F_S^\times \times V(F_S)}\Phi_S(Q(v)t,t^{-1},v)dv dt^\times\\
&=\int_{F_S^\times \times V(F_S)}\Phi_S\left(\frac{Q(v)}{t},t,v\right)dvdt^\times\\
&=\mathcal{I}_S(\Phi,1_0)(0).
\end{align*}
Thus \eqref{equal:0} is valid.  Here to justify the changes of variables we require the absolute convergence of the integral over $F_S^\times \times V(F_S)$.  To check this we note that $|\Phi_S|$ can be bounded by a nonnegative Schwartz function so we can assume that $\Phi_S$ (and hence $\Phi^{\mathrm{sw}}_S$) is nonnegative.  Under this assumption, by the Fubini-Tonelli theorem it is enough to check that the integral over $t$ converges absolutely.  At the archimedian places in $S$ this is explicitly stated in Theorem \ref{thm:gammaeq0}.  At the nonarchimedian places it is an easy consequence of the arguments proving Theorem \ref{thm:nonarch:bound}. \end{proof}

\begin{proof}[Proof of Theorem \ref{thm:main}]
Using the expansion of the $\delta$-symbol given in \cite[Proposition 2.1]{Getz:RSMonoid}  we write 
\begin{align*}
N(X)&=\sum_{\substack{ \xi \in V(F)}}\delta_{Q(\xi)}f\left(\frac{\xi}{\Delta(X)}\right)\\
&=\sum_{\substack{\xi \in V(F)}}\frac{c_{X,\Phi_0}}{X}\sum_{d \in F^\times}\left(\Phi_0\left(\frac{Q(\xi)}{d\Delta(X)},\frac{d}{\Delta(X)} \right)-\Phi_0\left(\frac{d}{\Delta(X)},\frac{Q(\xi)}{d\Delta(X)} \right)\right)f\left(\frac{\xi}{\Delta(X)} \right).
\end{align*}
Here we take $X$ sufficiently large (in a sense depending on $\Phi_0$) and for any $N>0$ one has 
\begin{align} \label{c:func}
c_{X,\Phi_0}=\frac{1}{\Gamma_{F\infty}(1)}+O_{\Phi_0,N}(X^{-N}).
\end{align}

It is not hard to see that the double sum over $\xi$ and $d$ here is absolutely convergent.  In particular we can exchange the sum over $d$ and the sum over $\xi$.  For each $d$ the function
$$\left(\Phi_0\left(\frac{Q(\xi)}{d\Delta(X)},\frac{d}{\Delta(X)} \right)-\Phi_0\left(\frac{d}{\Delta(X)},\frac{Q(\xi)}{d\Delta(X)} \right)\right)f\left(\frac{\xi}{\Delta(X)} \right)
$$
is Schwartz as a function of $\xi \in V(\A_F)$.  We can therefore apply Poisson summation in $\xi \in V(F)$ (see \eqref{PS}) to arrive at
\begin{align*}
&\frac{c_{X,\Phi_0}}{X}\sum_{d \in F^\times}\sum_{\substack{\xi \in V(F)}}\int_{V(\A_F)}\left(\Phi_0\left(\frac{Q(w)}{d\Delta(X)},\frac{d}{\Delta(X)} \right)-\Phi_0\left(\frac{d}{\Delta(X)},\frac{Q(w)}{d\Delta(X)} \right)\right) f\left(\frac{w}{\Delta(X)} \right) \psi\left( \frac{\langle \xi,w\rangle}{d} \right)dw.
\end{align*}
Here we have changed variables $\xi \mapsto d^{-1}\xi$ in $\xi$.  
Again it is not difficult to see that the resulting double sum over $\xi$ and $d$ is absolutely convergent, so we bring the sum over $\xi$ outside the sum over $d$ and apply Poisson summation in $d \in F^\times$ to arrive at
\begin{align*}
\sum_{\xi \in V(F)}\frac{c_{X,\Phi_0}}{2\pi iX z_F}\sum_{\chi \in \widehat{[\GG_{m}]}}\int_{\mathrm{Re}(s)=\sigma}&\int_{V(\A_F) \times \A_F^{ \times}}\left(\Phi_0\left(\frac{Q(w)}{t\Delta(X)},\frac{t}{\Delta(X)} \right)-\Phi_0\left(\frac{t}{\Delta(X)},\frac{Q(w)}{t\Delta(X)} \right)\right)\\
&\times f\left(\frac{w}{\Delta(X)} \right)\psi\left(\frac{\langle  \xi ,w \rangle}{t} \right)dw
\chi_s(t)dt^\times  ds.
\end{align*}
where $z_F:=\mathrm{Res}_{s=1}\zeta_F(s)$.
A reference for this application of Poisson summation is \cite[\S 2]{Blomer_Brumley_Ramanujan_Annals}.  We still have to check that the application is justified, but we take this up in a moment.

We change variables $(t,w) \mapsto (\Delta(X)t,\Delta(X)w)$ to arrive at
\begin{align} \label{after:PS}
\nonumber & \sum_{\xi \in V(F)}\sum_{\chi \in \widehat{[\GG_{m}]}}\int_{\mathrm{Re}(s)=\sigma}\frac{c_{X,\Phi_0} X^{s}}{2\pi iX^{1-n}z_F}\int_{V(\A_F) \times \A_F^{ \times }}\left(\Phi_0\left(\frac{Q(w)}{t},t \right)-\Phi_0\left(t,\frac{Q(w)}{t} \right)\right)\\& \times f\left(w \right)\psi\left(\frac{\langle \xi ,w \rangle}{t} \right)dw \chi_s(t)dt^\times ds \nonumber\\
&=\frac{c_{X,\Phi_0} X^{n-1}}{2\pi iz_F} \sum_{\xi \in V(F)} \sum_{\chi \in \widehat{[\GG_m]}}\int_{\mathrm{Re}(s)=\sigma}X^{s}\left(\mathcal{I}(\Phi,\chi_s)(\xi)-\mathcal{I}(\Phi^{\mathrm{sw}},\chi_s)(\xi) \right) ds. 
\end{align}
To check that the application of Poisson summation in $d$ is justified it suffices to check that 
\begin{align*}
\sum_{\xi \in V(F)} \sum_{\chi \in \widehat{[\GG_m]}}\int_{\mathrm{Re}(s)=\sigma}\left(\left|\mathcal{I}(\Phi,\chi_s)(\xi)\right|+\left|\mathcal{I}(\Phi^{\mathrm{sw}},\chi_s)(\xi) \right|\right) ds<\infty
\end{align*}
for $\sigma>0$.  But this is an easy consequence of Theorem \ref{thm:global:bound}.

We now take $\varepsilon>0$ and shift the contour to $\mathrm{Re}(s)=\varepsilon-n/2$.  In view of Theorem \ref{thm:global:bound} we then see that \eqref{after:PS} is equal to 
\begin{align} \label{Resat0:term}
&
\frac{c_{X,\Phi_0} X^{n-1}}{z_F}\mathrm{Res}_{s=0}\left(X^s\left( \mathcal{I}(\Phi,1_s)(0)-\mathcal{I}(\Phi^{\mathrm{sw}},1_s)(0)\right) \right)\\\label{0:term}
&+\frac{c_{X,\Phi_0} X^{n-1}}{z_F}\mathrm{Res}_{s=-1}\left(X^s\left( \mathcal{I}(\Phi,1_s)(0)-\mathcal{I}(\Phi^{\mathrm{sw}},1_s)(0)\right) \right)\\ \label{minus:term}
&-\frac{c_{X,\Phi_0} X^{n-1}}{z_F}\mathrm{Res}_{s=-1}\left(X^s\left( \mathcal{I}(\Phi,\mathcal{G}_s)(0)-\mathcal{I}(\Phi^{\mathrm{sw}},\mathcal{G}_s)(0)\right) \right)^{\delta_{4-n}\delta_{\mathcal{G}}}
\\ \label{smaller:res}
&+\frac{c_{X,\Phi_0} X^{n-1}}{z_F}\sum_{\substack{\xi \in V(F)\\ Q(\xi)=0 }}\mathrm{Res}_{s=1-n/2}\left(X^s\left( \mathcal{I}(\Phi,\mathcal{G}_s)(\xi)-\mathcal{I}(\Phi^{\mathrm{sw}},\mathcal{G}_s)(\xi)\right) \right)
\\&+
\frac{c_{X,\Phi_0} X^{n-1}}{2\pi iz_F} \sum_{\xi \in V(F)} \sum_{\chi \in \widehat{[\GG_m]}}\int_{\mathrm{Re}(s)=\varepsilon-n/2}X^{s}\left(\mathcal{I}(\Phi,\chi_s)(\xi)-\mathcal{I}(\Phi^{\mathrm{sw}},\chi_s)(\xi) \right) ds. \label{remainder}
\end{align}
The term with a negative sign \eqref{minus:term} occurs because the sum of \eqref{0:term} and \eqref{smaller:res} overcounts the residues when $\mathcal{G}$ is trivial and $n=4$.  The term \eqref{Resat0:term} is zero by Lemma \ref{lem:iszero}.  By Theorem \ref{thm:global:bound} and \eqref{c:func} the term \eqref{remainder} is $O_{\Phi,\varepsilon}(X^{\varepsilon+n/2-1})$.  

By Theorem \ref{thm:global:bound} if $\xi \neq 0$ then the pole of  $\mathcal{I}(\Phi,\mathcal{G}_s)(\xi)$ at $s=1-n/2$ is simple.  Combining this fact with the asymptotic for $c_{X,\Phi_0}$ from \eqref{c:func} we see that \eqref{smaller:res} is equal to $O_{\Phi_0,N}(X^{-N})$ plus
\begin{align*}
 \frac{ X^{n/2}}{\ell_F}\sum_{\substack{\xi \in V(F)\\ Q^\vee(\xi)=0 }}\mathrm{Res}_{s=1-n/2}\left(\left( \mathcal{I}(\Phi,\mathcal{G}_s)(\xi)-\mathcal{I}(\Phi^{\mathrm{sw}},\mathcal{G}_s)(\xi)\right) \right)
\end{align*}
for any $N>0$.  Assume for the moment that 
$n>4$ or $(-1)^{n/2}\det J \not \in (F^{\times})^2$.  Then by Theorem \ref{thm:global:bound} $\mathcal{I}(\Phi,\chi_s)(0)$ has only simple poles for $\mathrm{Re}(s)>-n/2$.  Thus when $n>4$ or $(-1)^{n/2}\det J \not \in (F^{\times})^2$ the terms \eqref{Resat0:term}, \eqref{0:term}, and \eqref{minus:term} can be evaluated in exactly the same manner as we evaluated \eqref{smaller:res}, proving the theorem in this case.

We now handle the case where $(-1)^{n/2}\det J \in (F^\times)^2$ and $n=4$.  In this case $\mathcal{G}$ is trivial, and hence  the sum of \eqref{Resat0:term}, \eqref{0:term}, and \eqref{minus:term} is
\begin{align*}
\frac{ X^{3}}{\ell_F}\mathrm{Res}_{s=-1}\left(X^s\left( \mathcal{I}(\Phi,1_s)(0)-\mathcal{I}(\Phi^{\mathrm{sw}},1_s)(0)\right) \right)+O_{\Phi_0,N}(X^{-N})
\end{align*}
for any $N>0$.  Here we have used \eqref{c:func} to replace $c_{X,\Phi_0}$ by its asymptotic value $\Gamma_{F\infty}(1)^{-1}$.  Since $n=4$, the pole of $\mathcal{I}(\Phi,1_s)(0)$ at $s=-1$ is of order $2$.  Thus the above equal to 
\begin{align*}
\frac{ X^{2}}{\ell_F}\Bigg( &\log X\lim_{s \to -1}\left((s+1)^2
\left( \mathcal{I}(\Phi,1_s)(0)-\mathcal{I}(\Phi^{\mathrm{sw}},1_s)(0)\right)\right)\\&+ \mathrm{Res}_{s=-1}\left(\mathcal{I}(\Phi,1_s)(0)-\mathcal{I}(\Phi,1_s)(0)\right)\Bigg)+O_{\Phi_0,N}(X^{-N}).
\end{align*}
To complete the proof we now note that
\begin{align*}
\lim_{s \to -1}\left((s+1)^2\mathcal{I}(\Phi,1_s)(0)\right)&=\lim_{s \to -1}\left((s+1)^2\Lambda_F(s+2)^2 \frac{\mathcal{I}(\Phi,1_s)(0)}{\Lambda_F(s+2)^2}\right)\\
&=\ell_F^2 \lim_{s \to -1}\frac{\mathcal{I}(\Phi,1_s)(0)}{\Lambda_F(s+2)^2}
\end{align*}
and similarly for $\Phi$ replaced by $\Phi^{\mathrm{sw}}$.
\end{proof}

\section{Archimedian bounds} \label{sec:arch:bound}

\subsection{Main result} For this section we fix an archimedian place $v$ of $F$ and omit it from notation, writing $F:=F_v$.  For each quasi-character $\chi:F^\times \to \CC$
there is a complex number $\mu_\chi$ such that 
$$
L(s,\chi)=\Gamma_F(s+\mu_\chi)
$$
with $\Gamma_F$ defined as in \eqref{Gamma:def}.
The analytic conductor of $\chi$ is then 
\begin{align} \label{an:cond}
C(\chi):=1+|\mu_{\chi}|_{\mathrm{st}}.
\end{align}
 The main result of this section is the following theorem:

\begin{thm}\label{thm:gammaneq0} Let $\sigma_1>\sigma_2>-n/2$ and $N \in \ZZ_{>0}$. Assume $\xi \neq 0$.  For $\sigma_1>\mathrm{Re}(s)>\sigma_2$ the integral over $t$ in the definition of $\mathcal{I}(\Phi,\chi_s)(\xi)$ converges absolutely.  There is an $A>0$ such that for $\sigma_1>\mathrm{Re}(s)>\sigma_2$ one has 
\begin{align*}
\mathcal{I}(\Phi,\chi_s)(\xi) \ll_{N,\sigma_1,\sigma_2} \max(|\xi|,1)^{-N}\min(|\xi|,1)^{-A}\mathcal{C}(\chi_{s})^{-N}.
\end{align*}
\end{thm}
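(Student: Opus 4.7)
My plan is to transform the inner $w$-integral via Fourier inversion in the first slot of $\Phi$ and an application of the archimedian Weil formula for Gaussian integrals, thereby extracting an explicit factor of $|t|^{n/2}$ responsible for absolute convergence of the $t$-integral on $\mathrm{Re}(s)>-n/2$. Concretely, writing $\Phi(Q(w)/t,t,w)=\int_F \widetilde\Phi_1(u,t,w)\,\psi(uQ(w)/t)\,du$ (inverse Fourier transform in the first argument) and swapping orders in
$$g_\xi(t):=\int_{V(F)}\Phi(Q(w)/t,t,w)\,\psi(\langle\xi,w\rangle/t)\,dw,$$
the combined phase $uQ(w)+\langle\xi,w\rangle$ completes the square as $uQ(w-w_0)-Q^\vee(\xi)/(2u)$ with $w_0:=-J^{-1}\xi/u$, and after the substitution $v:=w-w_0$ the resulting Gaussian integral $\int \widetilde\Phi_1(u,t,v+w_0)\,\psi(uQ(v)/t)\,dv$ is evaluated by the Weil formula, producing a factor $|t/u|^{n/2}\gamma_v(uQ/t)|\det J|^{-1/2}$ times a Schwartz amplitude $\Psi(u,t,\xi/u)$. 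One arrives at
$$g_\xi(t)=|t|^{n/2}\int_F |u|^{-n/2}\,\gamma_v(uQ/t)\,\Psi(u,t,\xi/u)\,\psi\!\bigl(-Q^\vee(\xi)/(2ut)\bigr)\,du$$
up to harmless constants, where $\Psi\in\mathcal{S}(F\times F\times V(F))$.

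With this identity, absolute convergence of the $t$-integral for $\mathrm{Re}(s)>-n/2$ is immediate: $|t|^{n/2}\chi_s(t)\,dt^\times$ behaves like $|t|^{\sigma+n/2-1}\,dt$ near $t=0$; the potential singularity $|u|^{-n/2}$ at $u=0$ is tamed by the Schwartz decay of $\Psi(u,t,\xi/u)$ in its third slot, which effectively restricts the integrand to $|u|\gtrsim|\xi|$; and Schwartz decay in the second slot handles $|t|\to\infty$. For decay in $|\xi|$ I would combine (i) the Schwartz decay of $\Psi$ in $\xi/u$, which gives arbitrary polynomial gain once $|u|$ is bounded, with (ii) integration by parts in $u$ against $\psi(-Q^\vee(\xi)/(2ut))$, which contributes a factor $|ut|/|Q^\vee(\xi)|$ per iteration and thus gains decay on the range $|u|\gtrsim 1$. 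A dyadic decomposition in $|u|$ then produces the claimed bound $\max(|\xi|,1)^{-N}\min(|\xi|,1)^{-A}$ with $A$ depending only on $\sigma_2$.

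For decay in the analytic conductor I would integrate by parts in $t$ against $\chi_s(t)\,dt^\times$: each IBP contributes a factor of order $\mathcal{C}(\chi_s)^{-1}$ together with a $t$-derivative applied to the remaining amplitude, which stays Schwartz; iterating $N$ times gives the required $\mathcal{C}(\chi_s)^{-N}$ bound. The main obstacle I anticipate is the careful bookkeeping at the $u=0$ boundary in the range $|\xi|\ll 1$ (where the cutoff $|u|\gtrsim|\xi|$ coming from $\Psi$ becomes weak), combined with tracking the Weil index $\gamma_v(uQ/t)$ through the integration-by-parts manipulations, and uniformly controlling both decay types simultaneously in the complex case where $\chi_{v,s}$ carries an additional integer parameter that must be absorbed by its own IBP.
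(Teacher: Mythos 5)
Your plan follows the same overall strategy as the paper: Fourier inversion in the first slot of $\Phi$, completion of the square to center at the critical point $w_0=-u^{-1}J^{-1}\xi$, the Weil formula to pull out $|t/u|^{n/2}\gamma(uQ/t)|\det J|^{-1/2}$, and then integration by parts in $t$ against $\chi_s$ for conductor decay. However, there is a real gap at the key step. Applying the Weil formula to $\int_V \widetilde\Phi_1(u,t,v+w_0)\psi(uQ(v)/t)\,dv$ does not produce a fixed Schwartz amplitude $\Psi(u,t,\xi/u)$; Plancherel gives, up to the advertised prefactor, the residual oscillatory integral
\begin{align*}
\int_{V(F)}\widehat{\widetilde\Phi}_1(u,t,\eta)\,\psi\!\left(\tfrac{\langle J^{-1}\xi,\eta\rangle}{u}\right)\bar\psi\!\left(\tfrac{t}{u}Q^\vee(\eta)\right)d\eta ,
\end{align*}
which depends on $t$ and $u$ through the extra phase $\bar\psi((t/u)Q^\vee(\eta))$, not merely through the tuple $(u,t,\xi/u)$. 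This dependence is precisely what produces the nontrivial loss $\min(|\xi|,1)^{-A}$ and cannot be silently absorbed into a Schwartz function: as $|u|\to 0$ with $|t|$ comparable, the $\eta$-integral oscillates rapidly and the naive bound deteriorates, while the implicit claim that $\Psi$ is uniformly Schwartz in all three slots is exactly what has to be proved.

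The paper avoids this by never claiming such an exact amplitude. It first reduces to $|t|<1$ and $|\xi|<|t|^{-\varepsilon}$ by trivial bounds and non-stationary integration by parts, then inserts a smooth partition $W_1+W_2$ (with $W_1$ supported on $|xw+J^{-1}\xi|\lesssim |t|^{1/a}$) \emph{before} applying Plancherel, so that the post-Plancherel integrand is localized; the non-stationary $W_2$ piece is handled by integration by parts in $w$ alone. Then Proposition \ref{prop:arch:Gaussian} together with H\"ormander's stationary-phase estimate (7.6.7) controls the localized residual by a Sobolev norm with an explicit error $(|t|/|x|)^{1/[F:\RR]}$, producing the two regimes of Proposition \ref{prop:arch:esti}. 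Your outline gives the correct intuition for why the $|t|^{n/2}$ gain, the effective restriction $|u|\gtrsim|\xi|$, and the IBP in $t$ should work, but to close the argument you would need to replace the bare assertion about $\Psi$ with a quantitative stationary-phase estimate (with smooth cutoffs) analogous to the paper's use of Hörmander's lemma, and to carry out the preliminary reductions that make those error bounds small in the relevant ranges of $|t|,|u|,|\xi|$.
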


We also require the following companion statement when $\xi=0$:
\begin{thm}  \label{thm:gammaeq0}
Let $N \in \ZZ_{>0}$, 
$
\sigma_1>-\tfrac{3}{4}>\sigma_2>-n/2.
$  
For $s \in \CC$ with $\sigma_1>\mathrm{Re}(s)>-\frac{3}{4}$ the integral over $t$ in the definition of $\mathcal{I}(\Phi,\chi_s)(0)$ converges absolutely and one has
\begin{align*}
\mathcal{I}(\Phi,\chi_s)(0) \ll_{N,\sigma_1} \mathcal{C}(\chi_s)^{-N}.
\end{align*}
The function $\mathcal{I}(\Phi,\chi_s)(0)$ can be meromorphically continued to $\mathrm{Re}(s)>-n/2$ and for  
$
-\frac{1}{4} \geq \mathrm{Re}(s)>\sigma_2
$ one has an estimate
\begin{align}
\gamma(\chi_{s+1},\psi)\mathcal{I}(\Phi,\chi_s)(0) \ll_{N,\sigma_2} \mathcal{C}(\chi_s)^{-N}
\end{align}
where $\gamma(\chi_s,\psi)$ is the usual $\gamma$-factor (see \eqref{gamma:factor}).
\end{thm}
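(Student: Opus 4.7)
The plan is to study the partial integral
$$\mathcal{J}(t) := \int_{V(F)} \Phi(Q(w)/t, t, w)\, dw,$$
so that $\mathcal{I}(\Phi, \chi_s)(0) = \int_{F^\times} \mathcal{J}(t)\, \chi_s(t)\, dt^\times$. Schwartz decay of $\Phi$ in its first and third arguments confines the integrand of $\mathcal{J}$ to a region of the form $\{|Q(w)| \leq C|t|,\, |w| \leq M\}$, whose Lebesgue measure is $O(|t|)$ by non-degeneracy of $Q$; this gives $\mathcal{J}(t) = O(|t|)$ as $t \to 0$, while Schwartz decay in the second argument gives $\mathcal{J}(t) = O_N(|t|^{-N})$ as $|t| \to \infty$. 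These estimates yield absolute convergence of the $t$-integral for $\mathrm{Re}(s) > -1$, in particular for $\mathrm{Re}(s) > -3/4$. Because $(t\partial_t)^k$ applied under the integral sign produces an expression of the same shape with $\Phi$ replaced by finite linear combinations of $\partial_1^a \partial_2^b \Phi$ multiplied by monomials in $Q(w)/t$ and $t$, the same estimates persist for all Mellin derivatives of $\mathcal{J}$. Iterated integration by parts in $t$ against $\chi_s$ (a $t\partial_t$-eigenfunction with eigenvalue $s + \mu_\chi$, in the appropriate sense) then yields $\mathcal{I}(\Phi, \chi_s)(0) \ll_{N, \sigma_1} \mathcal{C}(\chi_s)^{-N}$ in the initial range.

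For the meromorphic continuation I would apply Fourier inversion in the first argument of $\Phi$ followed by Weil's formula for the Gaussian integral \cite{Weil:Certains:groupes}. Writing $\Phi(Q(w)/t, t, w) = \int_F \mathcal{F}_1(\Phi)(u, t, w)\, \psi(-uQ(w)/t)\, du$ and evaluating the inner $w$-integral for each fixed $(u, t)$ as
$$\gamma_v(-(u/t)Q)\, |u/t|^{-n/2}\, |\det J|^{-1/2} \int_{V(F)} \Psi(u, t, y)\, \psi(ctQ^\vee(y)/u)\, dy,$$
where $\Psi \in \mathcal{S}(F \times F \times V(F))$ is the Fourier transform of $\mathcal{F}_1(\Phi)$ in its third variable and $c$ is a normalization constant, then substituting $u = rt$ and interchanging orders of integration, yields
$$\mathcal{I}(\Phi, \chi_s)(0) = |\det J|^{-1/2}\int_F dr\, \gamma_v(-rQ)\, |r|^{-n/2} \int_{V(F)} dy\, \psi(cQ^\vee(y)/r)\, Z(\chi_{s+1}, \Psi_{r,y}),$$
where $\Psi_{r,y}(t) := \Psi(rt, t, y)$ is Schwartz in $t$ and $Z(\chi', f) := \int_{F^\times} f(t)\chi'(t)\,dt^\times$ is the local Tate zeta integral. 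Tate's local functional equation gives $Z(\chi_{s+1}, \Psi_{r,y}) = \gamma(\chi_{s+1}, \psi)\, Z(\chi_{-s}^{-1}, \widehat{\Psi}_{r,y})$, and the dual integral on the right converges absolutely for $\mathrm{Re}(s) < 0$; substituting it back produces the meromorphic continuation of $\mathcal{I}(\Phi, \chi_s)(0)$ to $\mathrm{Re}(s) > -n/2$ with the factor $\gamma(\chi_{s+1}, \psi)$ extracted explicitly. A second Mellin integration-by-parts argument, now applied to $\widehat{\Psi}_{r,y}$, shows that $Z(\chi_{-s}^{-1}, \widehat{\Psi}_{r,y}) \ll_N \mathcal{C}(\chi_s)^{-N}$ uniformly in $(r, y)$, and absorbing the polynomial growth of $\gamma(\chi_{s+1}, \psi)$ (from Stirling's asymptotics for the archimedian Gamma factors) into the adjustable $N$ yields the desired bound $\gamma(\chi_{s+1}, \psi)\mathcal{I}(\Phi, \chi_s)(0) \ll_{N, \sigma_2} \mathcal{C}(\chi_s)^{-N}$ in the range $-1/4 \geq \mathrm{Re}(s) > \sigma_2$.

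The chief technical difficulty is controlling the outer $(r, y)$-integral uniformly in $\chi_s$, particularly near $r = 0$, where the factor $|r|^{-n/2}$ appears singular. I expect to resolve this by applying Weil's identity a second time to the inner $y$-integral, viewing $\int_{V(F)} \psi(cQ^\vee(y)/r)\, (\text{Schwartz in } y)\, dy$ as a Gaussian integral for $Q^\vee$ with coefficient $c/r$; this produces a compensating factor of $|c/r|^{-n/2}$ from the Weil coefficient, so that the combined integrand in $r$ is bounded at the origin. The locus $\{Q^\vee(y) = 0\}$ where the phase degenerates requires separate treatment but has codimension at least one and contributes only through rapidly decaying tails of $\Psi$. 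Justifying the Fubini swaps in the Fourier/Weil manipulations (needed because $|u|^{-n/2}$ is not locally integrable near $u = 0$ for $n \geq 2$) may require temporary truncation or interpretation in the sense of tempered distributions. Confirming that all these bounds are uniform in the conductor is what ultimately forces the restriction $\sigma_2 > -n/2$, matching the range of meromorphic continuation asserted in the statement.
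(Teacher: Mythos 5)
Your overall approach is substantially the same as the paper's: both proofs apply Fourier inversion in the first slot of $\Phi$, evaluate the resulting Gaussian $w$-integral via Weil's formula (Proposition \ref{prop:arch:Gaussian}), invoke Tate's local functional equation \eqref{Tate:local:fe} to extract the factor $\gamma(\chi_{s+1},\psi)$ and continue past $\mathrm{Re}(s)=-1$, and then use Mellin integration by parts for the conductor decay. Two organizational differences are worth noting. First, your direct measure-theoretic bound on $\mathcal{J}(t)$ for small $|t|$ is a cleaner way to get the initial convergence range than the paper's detour through Fourier inversion; it does come with a logarithmic correction near $t=0$ when $Q$ is isotropic (compute $\{|w_1w_2|\leq\epsilon,|w_i|\leq 1\}$ to see the $\epsilon\log(1/\epsilon)$), but since the theorem only asserts convergence for $\mathrm{Re}(s)>-\tfrac{3}{4}$ this is harmless. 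Second, the paper applies Weil/Plancherel \emph{only} on the region $|x|>1$ (and works with the untouched integrand on $|x|\leq 1$), so the factor $|\det xJ|^{-1/2}\sim|x|^{-n/2}$ is always a decay factor; you apply it globally, obtaining the singular $|r|^{-n/2}$ at $r=0$ and then proposing to compensate by a second Weil application. This does work---the cancellation of $|r|^{\pm n/2}$ factors is real---but it is essentially undoing your first application on the range $|r|\leq 1$, and the paper's split is the cleaner formalization of the same idea.

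One genuine misconception needs correcting. You write that the locus $\{Q^\vee(y)=0\}$ is ``where the phase degenerates'' and requires separate treatment. It does not: the phase is $Q^\vee(y)/r$, its stationary set is $\{y : \nabla Q^\vee(y)=0\}=\{0\}$ by nondegeneracy of $Q^\vee$, and the Hessian there is nonsingular. The quadric $\{Q^\vee(y)=0\}$ is merely the zero set of the phase, which plays no special role in the stationary-phase or Plancherel estimate. The actual source of delicacy (handled in the paper by the Hörmander estimate \cite[(7.6.7)]{Hormander:PDI} on $\psi(x^{-1}Q^\vee(D))\Phi_1(\cdot)|_{w=0}-\Phi_1(\cdot,0)$) is the comparison of the oscillatory integral to its leading stationary-phase term and the control of the error, uniformly in the auxiliary parameters---this is where the exponent $a[F:\RR]^{-1}$ and the $L^2$ norms of the derivatives $D$ enter. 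You will want to replace the vanishing-locus language with this comparison estimate. Apart from this, and the Fubini justifications you already flag, the plan should go through.
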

The argument is essentially the same as that proving \cite[Theorem 5.1 and Proposition 5.4]{Getz:RSMonoid}, but the current setting 
is a broad generalization of that of loc.~cit., so we give a complete proof.

We observe that it suffices to prove the theorem under the following simplifying assumption:
\begin{itemize}
\item[(A)] One has $\psi(x)=e^{-2 \pi i \mathrm{tr}_{F/\RR}(x)}$.
\end{itemize}
Indeed, every additive character $\psi:F \to \CC^\times$ is of the form $x \mapsto e^{-2 \pi i\mathrm{tr}_{F/\RR}(cx)}$ for some $c \in F^\times$.  Thus if we prove theorems \ref{thm:gammaneq0} and \ref{thm:gammaeq0} under this assumption we can deduce them in general upon taking a change of variables in $\xi$.  
We assume (A) for the remainder of the section.

\begin{prop} \label{prop:arch:Gaussian}
Let $x \in F^\times$.  The Fourier transform of the distribution 
\begin{align*}
V(F) &\lto \CC\\
w &\longmapsto \psi(xQ(w))
\end{align*}
is 
$$
\gamma(xQ)|\det(xJ)|^{-1/2}\bar{\psi}(x^{-1}Q^{\vee}(w))
$$ 
where 
$\gamma(xQ)=1$ if $F$ is complex and $\gamma(\psi(xQ))=(e^{-\pi i/4})^{a-b}$ if $F$ is real and the signature of $xJ$ is $(a,b)$.
\end{prop}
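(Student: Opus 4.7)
The plan is to carry out the classical completion-of-the-square argument for a quadratic Gaussian on $V(F)$, followed by diagonalization of $xJ$ to reduce to a product of one-variable Gaussian Fourier transforms.

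First I would complete the square. Set $a := x^{-1}J^{-1}\xi \in V(F)$. Using that $J$ is symmetric and that $Q(w) = \tfrac{1}{2} w^t J w$, a direct calculation gives the identity
$$xQ(w) + \langle w,\xi\rangle = xQ(w+a) - x^{-1}Q^{\vee}(\xi),$$
where $Q^\vee(\xi) = \tfrac{1}{2}\xi^t J^{-1}\xi$ as in \eqref{Qvee}. Applying $\psi$ and translating $w \mapsto w-a$ (which preserves the self-dual Haar measure on $V(F)$) then gives
$$\int_{V(F)}\psi(xQ(w))\,\psi(\langle w,\xi\rangle)\,dw \;=\; \bar{\psi}(x^{-1}Q^{\vee}(\xi))\cdot I(x), \qquad I(x) := \int_{V(F)}\psi(xQ(w))\,dw.$$
This already accounts for the factor $\bar{\psi}(x^{-1}Q^\vee(\xi))$ in the statement; it remains to show that $I(x) = \gamma(xQ)\,|\det(xJ)|^{-1/2}$.

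Next I would reduce the multidimensional Gaussian $I(x)$ to a product of one-dimensional ones. When $F = \RR$, the spectral theorem yields an orthogonal $P \in \mathrm{O}(n)$ with $P^t(xJ)P = \mathrm{diag}(\lambda_1,\dots,\lambda_n)$, where the number of positive (resp.\ negative) $\lambda_i$ is $a$ (resp.\ $b$). Changing variables $w \mapsto Pw$ (which preserves Lebesgue measure since $|\det P|=1$) yields
$$I(x) = \prod_{i=1}^n \int_{\RR}\psi(\lambda_i t^2/2)\,dt.$$
When $F = \CC$, every nondegenerate symmetric complex bilinear form is equivalent to the standard one, so there is $P \in \GL_n(\CC)$ with $P^t(xJ)P = I_n$. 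The change of variables $w \mapsto Pw$ has Jacobian $|\det P|_{\CC} = |\det(xJ)|^{-1/2}$, giving
$$I(x) = |\det(xJ)|^{-1/2}\left(\int_{\CC}\psi(t^2/2)\,dt\right)^n.$$

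Finally I would invoke the one-dimensional Gaussian Fourier transform formulas for the self-dual Haar measures, namely $\int_{\RR} \psi(\lambda t^2/2)\,dt = |\lambda|^{-1/2}e^{-\pi i\,\mathrm{sgn}(\lambda)/4}$ and $\int_{\CC}\psi(t^2/2)\,dt = 1$. Assembling these gives $I(x) = \gamma(xQ)|\det(xJ)|^{-1/2}$, with $\gamma(xQ) = (e^{-\pi i/4})^{a-b}$ in the real case (because $\sum_i \mathrm{sgn}(\lambda_i) = a-b$) and $\gamma(xQ)=1$ in the complex case, matching the proposition. The main obstacle is the bookkeeping of the phase factors and of the self-dual Haar measure normalizations; once these are settled each step is routine. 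An equivalent approach would be simply to quote Weil's theorem on the Fourier transform of characters of second degree from \cite{Weil:Certains:groupes}, which is in any case the source of the notation $\gamma(xQ)$ appearing in the statement.
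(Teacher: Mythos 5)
Your argument is correct, but it takes a different route than the paper's own proof, which simply observes that $w \mapsto \psi(xQ(w))$ is a character of second degree and cites Weil \cite[\S 14, Theorem 2 and \S 26]{Weil:Certains:groupes} (after noting that under assumption (A) the self-dual measure is Lebesgue for $F=\RR$ and twice Lebesgue for $F=\CC$). You instead unwind the content of Weil's theorem: completing the square produces the phase $\bar{\psi}(x^{-1}Q^\vee(\xi))$; diagonalizing $xJ$ (orthogonally over $\RR$, via $P^t(xJ)P=I_n$ over $\CC$) reduces to one-variable Fresnel/Gaussian integrals; and the one-dimensional formulas $\int_{\RR}e^{-\pi i\lambda t^2}\,dt = |\lambda|^{-1/2}e^{-\pi i\,\mathrm{sgn}(\lambda)/4}$ and $\int_{\CC}\psi(t^2/2)\,dt=1$ (with the doubled Lebesgue measure) assemble to the stated answer, with the signature bookkeeping $\sum_i\mathrm{sgn}(\lambda_i)=a-b$ giving $\gamma(xQ)=(e^{-\pi i/4})^{a-b}$ in the real case. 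The trade-off is the usual one: the paper's citation is shorter and slots into Weil's general framework for characters of second degree (used throughout the paper, e.g.\ in Lemma \ref{lem:nonarch:Gaussian}), whereas your computation is self-contained, makes the origin of the Weil index $\gamma(xQ)$ and the normalization $|\det(xJ)|^{-1/2}$ completely explicit, and avoids the need to match Weil's conventions to the paper's. You also correctly flag at the end that quoting Weil is an equivalent alternative, which is precisely what the paper does.
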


\begin{proof}
By assumption (A) the self-dual Haar measure on $V(F)=F^n$ is the product of the Lebesgue measure on $F$ if $F=\RR$ and it is the product of twice the Lebesgue measure if $F=\CC$.  

Note that $w \mapsto \psi(xQ(w))$ is a character of the second degree in the sense of \cite{Weil:Certains:groupes}.  Thus the proposition follows upon combining \cite[\S 14, Theorem 2 and \S 26]{Weil:Certains:groupes}. 
\end{proof}

\begin{prop} \label{prop:arch:esti}  Assume that $J$ is a diagonal matrix whose eigenvalues are all $\pm 1$.  
For any $N>0$ if $|\xi| \geq 1$ or $|t| \geq 1$ then
\begin{align*}
\int_{V(F)}\Phi\left(\frac{Q(w)}{t},t,w \right)\psi\left(\frac{\langle \xi,w \rangle}{t} \right)dw \ll \max(|\xi|,1)^{-N}\min(|t|^{n/2},|t|^{-N}).
\end{align*}
Let
\begin{align} \label{a}
a:=(3+\dim_\RR V(F)/2)[F:\RR].
\end{align}
If $\xi \neq 0$ and $|t|<|\xi|^{a}<1$ the integral is 
bounded by a constant
times
\begin{align*}
\frac{|t|^{n/2}}{|\xi|^{n/2+1/[F:\RR]}}.
\end{align*}
\end{prop}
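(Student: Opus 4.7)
The plan is to treat the integral as an oscillatory integral and bound it via a combination of Schwartz decay of $\Phi$ and repeated integration by parts against the phase $\psi(\langle \xi, w\rangle/t)$. The principal device is the substitution $w = |t|^{1/2}\,u$ in the real case, or $w = \sqrt{t}\,u$ for any fixed branch of the square root in the complex case: this transforms $Q(w)/t$ into $\pm Q(u)$ or $Q(u)$ respectively, contributes a Jacobian of $|t|^{n/2}$, and turns the phase into $\psi(\langle \xi, u\rangle / c)$ where $|c| = |t|^{1/2}$. Since $|t|^{1/2}$ remains bounded whenever $|t| \leq 1$, the amplitude $u \mapsto \Phi(\pm Q(u), t, c\,u)$ and all its $u$-derivatives are dominated by a fixed Schwartz function of $u$ uniformly in $t$: each $\partial_{u_j}$ produces either a factor of $\pm\partial_j Q(u)$ (absorbed by the Schwartz decay of $\Phi$ in its first slot) or the factor $c$ of absolute value $\leq 1$ (absorbed into $\Phi$'s third slot).

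For the first assertion I would split into subcases. When $|t| \geq 1$ the Schwartz decay of $\Phi$ in its second variable gives $|\Phi(Q(w)/t, t, w)| \ll_M |t|^{-M}$ times a fixed Schwartz function of $w$, and integration by parts in $w$ against $\psi(\langle \xi, w\rangle/t)$ yields a factor of $|t|/|\xi|$ per step. For $|\xi| \geq 1$, iterating $N$ times and choosing $M$ sufficiently large absorbs the $|t|^N$ loss and produces $|\xi|^{-N}|t|^{-N}$; when $|\xi|<1$ (so $|t| \geq 1$) a trivial estimate already gives $|t|^{-N}$. Either way this matches the claim since $\min(|t|^{n/2}, |t|^{-N}) = |t|^{-N}$ when $|t| \geq 1$. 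When $|t| \leq 1$ and $|\xi| \geq 1$ I would instead perform the substitution above, picking up the $|t|^{n/2}$ factor; each integration by parts in $u$ against the transformed phase then gains at most $|t|^{1/(2[F:\RR])}/|\xi|^{1/[F:\RR]} \leq |\xi|^{-1/[F:\RR]}$, and iterating produces $|t|^{n/2}|\xi|^{-N}$ for any $N$.

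For the second assertion the same substitution supplies the prefactor $|t|^{n/2}$, after which $N := \lceil n[F:\RR]/2 \rceil + 1$ integrations by parts against the transformed phase give a bound
\begin{align*}
|t|^{n/2 + N/(2[F:\RR])}\,|\xi|^{-N/[F:\RR]}.
\end{align*}
Since $|t| < |\xi|^a < 1$ with $a \geq 2$ and $N/[F:\RR] \geq n/2 + 1/[F:\RR]$, the inequality $|t|^{N/(2[F:\RR])} \leq |\xi|^{aN/(2[F:\RR])}$ together with $|\xi| < 1$ immediately gives a bound of at most $|t|^{n/2}|\xi|^{-n/2 - 1/[F:\RR]}$. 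The main obstacle throughout is the complex case: there one must perform IBP with respect to the real coordinates $(x_j, y_j)$ of $u_j = x_j + iy_j$ and verify that the real gradient of $e^{-2\pi i\,\mathrm{tr}_{\CC/\RR}(\langle \xi, u\rangle/\sqrt{t})}$ has magnitude $\gtrsim |\xi|^{1/2}/|t|^{1/4}$, so that the uniform per-step gain $|t|^{1/(2[F:\RR])}/|\xi|^{1/[F:\RR]}$ really does hold across all archimedean places. Since only finitely many IBPs are performed and all Schwartz norms of $\Phi$ are fixed, the implicit constants are manifestly independent of $\xi$ and $t$.
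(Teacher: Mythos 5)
Your proposed substitution $w = \sqrt{t}\,u$ is a natural idea, but the central claim that makes the argument go — that the amplitude $u \mapsto \Phi(Q(u),t,cu)$ and its $u$-derivatives are dominated by a \emph{fixed} Schwartz function uniformly in $t$ with $|t| \le 1$ — is false when $Q$ is indefinite, and the proposition must cover all diagonal $J$ with eigenvalues $\pm 1$ (so in the real case $Q$ is generically indefinite; over $\CC$ the light cone is also nonempty). Along the light cone $Q(u)=0$ the first slot provides no decay, and the third slot only kicks in for $|u| \gtrsim |c|^{-1} = |t|^{-1/(2[F:\RR])}$, which is unbounded as $t \to 0$. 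Worse, $\partial_{u_j}\Phi(Q(u),t,cu)$ contains the term $(\partial_j Q)(u)\,\partial_x\Phi(Q(u),t,cu)$, and $|\partial_j Q(u)| \sim |u|$ is \emph{not} controlled by the decay of $\partial_x\Phi$ in its first argument, since $|Q(u)|$ can remain bounded while $|\partial_j Q(u)|$ grows without bound. So the claimed per-step gain $|t|^{1/(2[F:\RR])}/|\xi|^{1/[F:\RR]}$ with a $t$-uniform constant does not hold.

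If one redoes the bookkeeping honestly, the $L^1$-norm of the $N$th derivative of the amplitude grows like $|t|^{-(N+n-2)/(2[F:\RR])}$ (because the thickened light cone $\{|Q(u)| \lesssim 1,\ |u|\lesssim |t|^{-1/(2[F:\RR])}\}$ has volume $\sim |t|^{-(n-2)/(2[F:\RR])}$, and the derivative factors contribute $|u|^N$ there). Combined with the IBP gain this yields at best $|t|\,|\xi|^{-N}$ rather than $|t|^{n/2}|\xi|^{-N}$. This is decisively weaker: in the proof of Theorem~\ref{thm:gammaneq0} the exponent $n/2$ is precisely what makes $\int_{|t|<1}|t|^{n/2+\sigma_2}\,dt^\times$ converge for $\sigma_2 > -n/2$; the exponent $1$ would only give convergence for $\sigma_2 > -1$, which is not good enough when $n \ge 4$. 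So the gap is not a technical annoyance — the final bound is genuinely lost.

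The mechanism the paper uses to capture the full $|t|^{n/2}$ without loss is different in kind: it first Fourier-inverts $\Phi$ in its \emph{first} slot, turning the integrand into a genuine quadratic phase $\psi\bigl((xQ(w)+\langle\xi,w\rangle)/t\bigr)$, then localizes near the critical point $w=-x^{-1}J^{-1}\xi$ and applies the \emph{exact} Gaussian Fourier transform (Proposition~\ref{prop:arch:Gaussian} via Plancherel), together with H\"ormander's quantitative stationary-phase error estimate. The Gaussian Fourier transform handles the quadratic oscillation exactly and produces the factor $|t|^{n/2}/|\det xJ|^{1/2}$ cleanly; linear-phase integration by parts alone, as in your proposal, cannot see this cancellation. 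You would need to incorporate a stationary-phase/Gaussian-Fourier-transform step to rescue the argument, at which point it essentially becomes the paper's proof.
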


Assuming this proposition (which will be proved in \S \ref{ssec:proof:prop:arch})  we now prove Theorem  \ref{thm:gammaneq0}:

\begin{proof}[Proof of Theorem \ref{thm:gammaneq0}]
Taking an appropriate change of variables we can and do assume that $J$ is a diagonal matrix with $\pm 1$ as eigenvalues.  

Let $D=t\frac{\partial}{\partial t}$ (and, if $F$ is complex, $\bar{D}=\bar{t}\frac{\partial}{\partial \bar{t}}$) viewed as a differential operator on $F^\times$. 
By Proposition \ref{prop:arch:esti} for all $i \in \ZZ_{\geq 0}$ (and $j \in \ZZ_{\geq 0}$ when $F$ is complex) the integral
\begin{align} \label{Desti}
\int_{V(F)}D^i\bar{D}^j\Phi\left(\frac{Q(w)}{t},t,w\right)\psi\left(\frac{\langle \xi,w \rangle}{t} \right)dw
\end{align}
is bounded by a constant depending on $i,j,N$ times $\max(|\xi|,1)^{-N}\min(|t|^{n/2},|t|^{-N})$ if $|\xi| \geq 1$ or $|t| \geq 1$ and bounded 
by a constant times 
$\frac{|t|^{n/2}}{|\xi|^{n/2+1/[F:\RR]}}$ if 
$|t|<|\xi|^{a}<1$.  Here 
$\sigma_1>\mathrm{Re}(s)>\sigma_2$.  

Thus applying integration by parts in $t$ (and $\bar{t}$ if $F$ is complex) if $|\xi|\geq 1$ we obtain a bound of 
\begin{align*}
\mathcal{I}(\Phi,\chi_s)(\xi) &\ll_{N,\sigma_i} \mathcal{C}(\chi_s)^{-N}|\xi|^{-N}\left(\int_{|t|<1}|t|^{n/2+\sigma_2}dt^\times +1\right)\\
& \ll_{\sigma_2} \mathcal{C}(\chi_{s})^{-N}|\xi|^{-N}.
\end{align*}
If $|\xi|<1$ we similarly obtain a bound
\begin{align*}
\mathcal{I}(\Phi,\chi_s)& \ll_{N,\sigma_i} \mathcal{C}(\chi_s)^{-N}|\xi|^{-n/2-1/[F:\RR]}\Bigg(\int_{|t|<|\xi|^{a}}|t|^{n/2+\sigma_2}dt^\times+\int_{|\xi|^{a} \leq |t|<1}|t|^{\sigma_2}dt^\times+1\Bigg)\\
& \ll_{N,\sigma_i}\mathcal{C}(\chi_s)^{-N}|\xi|^{-A}
\end{align*}
for $A>0$ sufficiently large.  
\end{proof}

If $\Phi \in \mathcal{S}(V(F))$ we set notation
for the inverse Fourier transform of $\psi(xQ(w))\widehat{\Phi}(w)$:
\begin{align} \label{inv:FT}
\psi(xQ(D))\Phi(\xi):=\int_{V(F)}
\bar{\psi}(\langle \xi,w \rangle-xQ(w))\widehat{\Phi}(w)dw.
\end{align}
Here the $D$ is present to indicate that
 $\psi(xQ(D))$ can be viewed as a certain formal sum of differential operators, see \cite[Theorem 7.6.2]{Hormander:PDI}, for example.

We denote by 
\begin{align} \label{gamma:factor}
\gamma(\chi_s,\psi)=
\frac{L(1-s,\bar{\chi})\varepsilon(s,\chi,\psi)}{L(s,\chi)}
\end{align}
the usual $\gamma$-factor (not to be confused with Weil's numbers occurring in Proposition \ref{prop:arch:Gaussian}).  
The function is meromorphic as a function of $s$, holomorphic for 
$\mathrm{Re}(s)<1$.  For $\Phi \in \mathcal{S}(F)$ the functional equation for
local zeta function established in Tate's thesis is
\begin{align}\label{Tate:local:fe}
\gamma(\chi_s,\psi)\int_{F^\times}\Phi(t)\chi_s(t)dt^\times=\int_{F^\times} \widehat{\Phi}(t)\bar{\chi}_{1-s}(t)dt^\times.
\end{align}
With this notation out of the way we can now prove Theorem \ref{thm:gammaeq0}:
\begin{proof}[Proof of Theorem \ref{thm:gammaeq0}]

As in the proof of Theorem \ref{thm:gammaneq0} we see that to prove the estimates for $\mathcal{I}(\Phi,\chi_s)(0)$ in the range $\mathrm{Re}(s) \geq -\tfrac{3}{4}$ it is enough to show that $\mathcal{I}(\Phi,\chi_s)(0)$ converges absolutely in that range.  This is our first goal.  

Write $\Phi_1(x,t,w)=\int_F \Phi(y,t,w)\bar{\psi}(xy)dy$.  By Fourier 
inversion we have
\begin{align*}
\mathcal{I}(\Phi,\chi_s)(0)=\int_{F \times F^\times \times V(F)}\Phi_1(x,t,w)
\psi\left( \frac{xQ(w)}{t}\right)\chi_s(t)dt^\times dx dw.
\end{align*}
This converges absolutely for $\mathrm{Re}(s)>0$.  Change variables $x \mapsto tx$ to arrive at
\begin{align} \label{xt:int}
\int_{F \times F^\times} \left(
\int_{ V(F)}\Phi_1(tx,t,w)
\psi(xQ(w)) dw\right)\chi_{s+1}(t)dt^\times dx.
\end{align}
The integral over 
$|x| \leq 1$ converges absolutely for $\mathrm{Re}(s)>-1$.  We now treat the integral over 
$|x|>1$.  Following \cite[Lemma 7.7.3]{Hormander:PDI} one applies the Plancherel formula and Proposition \ref{prop:arch:Gaussian}
to see that this contribution can be written as 
\begin{align}\label{x:large}
\int_{(|x|>1) \times F^\times } \frac{\gamma(xQ)}{|\det xJ|^{1/2}}\bar{\psi}\left(x^{-1}Q^{\vee}(D) \right)\Phi_1(tx,t,w)|_{w=0}\chi_{s+1}(t)dt^\times dx
\end{align}
with notation as in \eqref{inv:FT}.
One has an estimate
\begin{align*} 
|\overline{\psi}( x^{-1}Q^{\vee}(D))\Phi_1(tx,t,w)|_{w=0}-\Phi_1(tx,t,0)| \ll_{\Phi,J}
 \frac{1}{|x|^{1/[F:\RR]}}\sum_{D} |D\Phi_1(tx,t,\cdot)|_2
\end{align*}
from \cite[Lemma 7.7.3, (7.6.7)]{Hormander:PDI}, where $|\cdot|_2:=|\cdot|_{L^2(V(F))}$
and the sum over $D$ is over an $\RR$-vector space basis of the space of invariant differential 
forms on $V(F)$ of degree less than or equal to $a[F:\RR]^{-1}$ with $a$ defined as in \eqref{a}.  
Thus the double integral over $x$ and $t$ in 
\eqref{x:large} is absolutely convergent for $\mathrm{Re}(s)>-1$.  

We now return to \eqref{xt:int}.  We apply \eqref{Tate:local:fe} for each fixed
$x$ to see that the product of $\gamma(\chi_{s+1},\psi)$ and \eqref{xt:int} is equal to 
\begin{align} \label{after:Tate}
\int_{F}\left(\int_{F^\times \times V(F)}\left(
\int_{F}\Phi_1(\alpha x,\alpha,w)\psi(t\alpha)d\alpha \right)\psi(xQ(w))dw\bar{\chi}_{-s}(t)dt^\times\right)dx.
\end{align}
The inner integral is now absolutely convergent for $\mathrm{Re}(s)<0$.  We henceforth
assume that $-n/2<\mathrm{Re}(s)<0$. 

 The integral over $|x| \leq 1$ in \eqref{after:Tate} converges absolutely.  Applying integration by parts as in the proof of Theorem \ref{thm:gammaneq0} one deduces that this contribution is bounded by $O_{N,\sigma_2}(\mathcal{C}(\chi_s)^{-N})$ for any $N>0$ as desired.  
   
 Consider the contribution of $|x|>1$.  
We take a change of variables $\alpha \mapsto x^{-1}\alpha$ 
and then $t \mapsto xt$ to see that this contribution is
\begin{align*}
\int_{|x|>1} \left(\int_{F^\times \times V(F)} 
\left(\int_F \Phi_1(\alpha,x^{-1}\alpha,w)\psi(t\alpha)
d\alpha \right)\psi(xQ(w))dw \bar{\chi}_{-s}(xt)dt^\times \right)
\frac{dx}{|x|}.
\end{align*} 
We claim the integrals over $t$ and $x$ converge absolutely for $0>\mathrm{Re}(s)>\sigma_2$.  Since $\Phi$ was an arbitrary Schwartz function, the claim allows us to apply integration by parts as before and obtain an estimate of $O_{N,\sigma_2}(\mathcal{C}(\chi_s)^{-N})$ on this integral. 

We now prove the absolute convergence claim and thereby complete the proof of the theorem.
We proceed as before, using the Plancherel theorem to write the above as 
\begin{align} \label{to:estimate:0}
\int_{|x|>1}\left(\int_{(F^\times)^2}\frac{\gamma(xQ)}{|xJ|^{n/2}}
\psi(x^{-1}Q^\vee(D))
\int_F\Phi_1(\alpha,x^{-1}\alpha,w)\psi(t\alpha)d\alpha |_{w=0}
\bar{\chi}_{-s}(xt)dt^\times\right)\frac{dx}{|x|}.
\end{align}
Just as before
\begin{align*}
&\left|\psi(x^{-1}Q^{\vee}(D))
\int_F \Phi_1(\alpha,x^{-1}\alpha,w)\psi(t\alpha)d\alpha
|_{w=0}-\int_F\Phi_1(\alpha,x^{-1}\alpha,0)
\psi(t\alpha)d\alpha \right|\\
& \ll_{\Phi,J} \frac{1}{|x|^{1/[F:\RR]}}\sum_D \left|D \left( \int_F \Phi_1(\alpha,x^{-1}\alpha,\cdot)\right)\right|_2
\end{align*}
where the sum over $D$ is as before.  In view of this estimate, it is not hard to see
 that \eqref{to:estimate:0} is absolutely convergent for $-n/2<\mathrm{Re}(s)<0$.
\end{proof}

\subsection{Proof of Proposition \ref{prop:arch:esti}} \label{ssec:proof:prop:arch}
In this section we estimate the oscillatory integral
\begin{align} \label{to:estimate:arch}
\int_{V(F)}\Phi\left(\frac{Q(w)}{t},t,w \right)\psi\left(\frac{\langle \xi,w \rangle}{t} \right)dw
\end{align}
as a function of $t$ and $\xi$ under the assumption that $\xi \neq 0$.  Our goal is to bound this integral by a constant depending on $N_0>0$ and $\Phi$ times
\begin{align} \label{desired}
\max(|\xi|,1)^{-N}\min(|t|^{n/2},|t|^{-N_0})
\end{align}
for any $N_0>0$  provided $|\xi| \geq 1$ or $|t| \geq 1$.  
If $\xi \neq 0$ and $|t|<|\xi|^{a}<1$ we only require the weaker bound
\begin{align*}
\frac{|t|^{n/2}}{|\xi|^{n/2+1/[F:\RR]}}.
\end{align*}
Here $a$ is defined as in \eqref{a}.
The content of Proposition \ref{prop:arch:esti} is that these estimates are valid.

The basic idea is quite simple.  Let
\begin{align} \label{FT1}
\Phi_1(x,y,w):=\int_{F}\Phi(t,y,w) \bar{\psi}(xt)dt
\end{align}
be the inverse Fourier transform of $\Phi$ in the first variable.  Then by Fourier inversion the integral \eqref{to:estimate:arch} is 
\begin{align} \label{FI}
&\int_{F^\times \times V(F)}\Phi_1\left(x,t,w \right)\psi\left(\frac{xQ(w)+\langle \xi,w \rangle}{t} \right)dwdx
\end{align}
Here we have used the fact that $F^\times \subset F$ is of full measure with respect to $dx$.  
We apply the stationary phase method to the integral over $w$.  Provided $x \neq 0$ it has a single nondegenerate critical point at $v=-x^{-1}J^{-1}\xi$ and the norm of the Hessian is $\frac{|\det xJ|}{|t|^{n}}$ everywhere.  At this point we would be done (at least modulo uniformity in $\xi$) if we knew $x$ was bounded away from zero.  In our setting this is not the case, so we have to do a little more work.  

We now begin the formal argument.  We first deal with some trivial cases.  Suppose $|t| \geq \max(|\xi|,1)^{1/2}$.  
In this case we obtain a bound on \eqref{to:estimate:arch} of $O_{\Phi}(|t|^{-N}) \leq O_{\Phi}(\max(|\xi|,1)^{-N/2})$ since $\Phi$ is rapidly decreasing as a function of $t$.  This bound is better than \eqref{desired} for sufficiently large $N$ (under the assumption $|t| \geq \max(|\xi|,1)^{1/2}$).  Thus we can assume that 
\begin{align} \label{first:reduction}
|t|<\max(|\xi|,1)^{1/2}.  
\end{align}
Now assume that $|t| \geq 1$ (so $|\xi| \geq 1$).  Applying integration by parts in $w$ we see that for any $N>0$ the integral \eqref{to:estimate:arch} is 
bounded by $O_{N,\Phi}\left(\max\left(\frac{|t|}{|\xi|},\frac{1}{|\xi|} \right)^N \right)$.  For sufficiently large $N$ this is stronger than the bound \eqref{desired} under the assumption that $|t| \geq 1$.  
Thus we can assume 
\begin{align} \label{t:small}
|t|<1.
\end{align}
Finally, assume that $|\xi| \geq |t|^{-\varepsilon}$ for some $1>\varepsilon>0$, which implies by \eqref{t:small} that $|\xi| \geq 1$.  Applying integration by parts in $w$ we obtain a bound of $O_{\Phi,N}(|\xi|^{-N})$, which is a better bound than \eqref{desired} if $|\xi| \geq |t|^{-\varepsilon}$ if $N$ is sufficiently large.  Thus we can assume that 
\begin{align} \label{eps:bound}
|\xi|<|t|^{-\varepsilon}
\end{align}
for a fixed $1>\varepsilon>0$.

We now begin our application of stationary phase to the integral \eqref{FI}.  We first break the integral into two ranges, one where 
the phase is close to stationary and one where it is not.  To accomplish this, choose nonnegative functions $W_1,W_2 \in C^\infty(V(F))$ such that $W_1+W_2=1$, $W_1$ is identically $1$ in a neighborhood of $0$ and is zero outside of 
\begin{align} \label{half:support}
\{w \in V(F):|w|<\tfrac{1}{2}\}
\end{align}
and $W_2$ vanishes in a neighborhood of zero.  Then \eqref{FI} is equal to the sum over $i \in \{1,2\}$ of 
\begin{align} \label{theis}
\int_{F^\times \times V(F)}W_i\left(\frac{xw+ J^{-1}(\xi)}{|t|^{1/a}}\right)\Phi_1(x,t,w)\psi\left(\frac{xQ(w)+\langle \xi,w \rangle}{t} \right)dxdw.
\end{align}

The power of $|t|$ in the denominator of the argument of $W_i$ will play a role in the estimation of the $i=1$ term.  The $i=2$ term, where the phase is not stationary, can be bounded by a constant depending on $\Phi$ and $N$ times $|t|^{N}$ for any $N>0$ by repeated application of integration by parts in $w$.  In view of the fact that $|t|<1$ and $|\xi|<|t|^{-\varepsilon}$ by \eqref{t:small} and \eqref{eps:bound} we see that the bound \eqref{desired} is valid for the $i=2$ term.

Thus we are reduced to bounding the $i=1$ term.  Taking a change of variables $w \mapsto w-x^{-1}J^{-1}(\xi)$ we see that the $i=1$ case of \eqref{theis} is equal to 
\begin{align*}
\int_{F^\times \times V(F)}W_1\left(\frac{xw}{|t|^{1/a}}\right)\Phi_1\left(x,t,w-\frac{J^{-1}(\xi)
}{x}\right)\psi\left(\frac{xQ(w)}{t}\right)dw\bar{\psi}\left(\frac{Q^{\vee}(\xi)}{xt} \right)dx.
\end{align*}
Apply Plancherel's theorem to the $w$ integral.  Using Proposition \ref{prop:arch:Gaussian} and the notation of 
\eqref{inv:FT} this implies that the above is equal to 
\begin{align} \label{right:before}
\int_{F^\times}\frac{|t|^{n/2}\gamma(xQ)}{|\det xJ|^{1/2}}
\psi\left(-\frac{tQ^\vee (D)}{x}\right)W_1\left(\frac{xw}{|t|^{1/a}} \right)
\Phi_1\left(x,t,w-\frac{J^{-1}(\xi)}{x}\right)|_{w=0}
\bar{\psi}\left(\frac{Q^\vee(\xi)}{xt} \right)dx.
\end{align}
We now employ the estimate
\begin{align*}
&\left|\psi\left(-\frac{tQ^\vee(D)}{x}\right)W_1\left(\frac{xw}{|t|^{1/a}} \right)
\Phi_1\left(x,t,w-\frac{J^{-1}(\xi)}{x}\right)|_{w=0}-
\Phi_1\left(x,t,-\frac{J(\xi)}{x} \right)\right|\\
&\ll \left(\frac{|t|}{|x|} \right)^{1/[F:\RR]}
\sum_{D}\left|D\left( W_1\left( \frac{x(\cdot)}{|t|^{1/a}}\right)\Phi_1\left(x,t,(\cdot)-\frac{J^{-1}(\xi)}{x}\right)\right)\right|_2
\end{align*}
from \cite[(7.6.7)]{Hormander:PDI}, where $|\cdot|_2=|\cdot|_{L^2(V(F)}$ and the 
sum on $D$ is over an $\RR$-vector space basis of the space of invariant 
differential operators on $V(F)$ of degree less than or equal to $a[F:\RR]^{-1}$.  Thus
\eqref{right:before} is bounded by a constant times the sum of 
\begin{align} \label{suff:bound}
\int_{F^\times} \left(\frac{|t|}{|x|}\right)^{n/2}
\left|\Phi_1\left(x,t,-\frac{J^{-1}(\xi)}{x}\right)\right|dx \ll_N |t|^{n/2}
\min(|\xi|,1)^{-n/2}\max(|\xi|,1)^{-N}
\end{align}
(where $N \in \ZZ_{>0}$ is arbitrary) and
\begin{align} \label{more:complicated}
\int_{F^\times} \left(\frac{|t|}{|x|}\right)^{n/2+1/[F:\RR]}
\sum_D \left| D\left(W_1\left( \frac{x(\cdot)}{|t|^{1/a}}\right)
\Phi_1\left(x,t,(\cdot)-\frac{J^{-1}(\xi)}{x}\right) \right)\right|_2dx.
\end{align}
The bound \eqref{suff:bound} is sufficient, so we are left with 
bounding \eqref{more:complicated}.  Since the differential operators occurring in the sum here are of degree less than or equal to $a[F:\RR]^{-1}$
the above is equal to 
\begin{align*}
\int_{F^\times} \left(\frac{|t|}{|x|}\right)^{n/2+1/[F:\RR]}
&\sum_D \left|D\left(W_1\left( \frac{x(\cdot)+J^{-1}(\xi)}{|t|^{1/a}}\right)
\Phi_1(t,x ,\cdot)\right) \right|_2dx\\
&\ll \int_{F^\times} \frac{|t|^{n/2}}{|x|^{n/2+1/[F:\RR]}}
\left| W_3\left( \frac{x(\cdot)+J^{-1}(\xi)}{|t|^{1/a}}\right)\Phi_2(t,x,\cdot)\right|_2dx
\end{align*}
for some $W_3 \in C_c^\infty(V(F))$ supported in \eqref{half:support} and $\Phi_2 \in \mathcal{S}(F \times F \times V(F))$.  

This integrand is supported in the set of $w$ such that 
\begin{align} \label{support}
|xw+J^{-1}(\xi)|<\frac{|t|^{1/a}}{2}.
\end{align}
Suppose that $|\xi| \geq 1$.  Then by \eqref{support} we have
$$
|xw_j| \asymp |\xi|
$$
for some $j$.  Here we have used our assumption that $J$ is diagonal with eigenvalues $\pm 1$.

Using the Schwartz function $\Phi_2$ we obtain an estimate 
on \eqref{more:complicated} of $O_N(|t|^{n/2}|\xi|^{-N})$ 
for $N>0$.  Now assume that $|\xi|<1$ and $|t|<|\xi|^{a}<1$, which, by the assumptions 
of the proposition, is the last case we must treat. Since $|x+1| \leq \tfrac{1}{2}$ implies $|x| \asymp 1$ the bound \eqref{support}
implies $|xw_j| \asymp |\xi_j|$ for some $j$ and we have a bound on \eqref{more:complicated}
of
\begin{align}
\frac{|t|^{n/2}}{|\xi|^{n/2+1/[F:\RR]}}.
\end{align}
\qed

\section{Nonarchimedian case} \label{sec:nonarch:bound}
In this section and the following we assume that $v$ is a nonarchimedian place of $F$ which is omitted from notation: $F:=F_v$.
  We let $q=\OO/\varpi$ where $\varpi$ is a uniformizer of $\OO$. For the remainder of the paper $|\cdot|$ denotes the norm on $F$ such that $|\varpi|=q^{-1}$.

\begin{lem} \label{lem:nonarch:support} Assume that $\chi:F^\times \to \CC^\times$ is a unitary character.
The integrals defining
$\mathcal{I}(\Phi,\chi_s)(\xi)$ converge absolutely if $\mathrm{Re}(s) > 0$. There is an element $\beta \in F^\times$ and an ideal $\mathfrak{f}$ of $\OO$ depending only on $\Phi$ such that 
$\mathcal{I}(\Phi,\chi_s)(\xi)=0$ unless $\xi \in \beta^{-1}V(\OO)$ and the conductor of $\chi$ divides $\mathfrak{f}$.
\end{lem}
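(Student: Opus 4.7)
Fix integers $L, L', M_1, N_\Phi$ such that $\Phi$ vanishes outside the box $\{(x,y,w) : |x| \leq q^{M_1},\ |y| \leq q^L,\ |w| \leq q^{L'}\}$ and is invariant under translation by the lattice $\varpi^{N_\Phi}(\OO \times \OO \times V(\OO))$.  For absolute convergence when $\mathrm{Re}(s) > 0$, the $w$-integral is over a set of finite Haar measure and $\Phi$ is bounded, so $|\mathcal{I}(\Phi, \chi_s)(\xi)|$ is dominated by a constant multiple of $\int_{|t| \leq q^L} |t|^{\mathrm{Re}(s)}\, dt^\times$; partitioning into annuli $|t| = q^{-k}$ turns this into a geometric series in $q^{-\mathrm{Re}(s)}$ that converges precisely when $\mathrm{Re}(s) > 0$.

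For the bound on the conductor of $\chi$, I would use the simultaneous substitution $(w,t) \mapsto (wu, tu)$ with $u \in 1 + \varpi^M \OO$.  Since $|u|=1$ this substitution preserves $dw$ and $dt^\times$, and it preserves $\psi(\langle \xi, w \rangle/t)$ because the two factors of $u$ in $\langle \xi, wu\rangle/(tu)$ cancel; while $\chi_s(tu) = \chi(u)\chi_s(t)$.  For $M \geq N_\Phi + \max(M_1, L, L')$, local constancy of $\Phi$ forces $\Phi(uQ(w)/t, tu, wu) = \Phi(Q(w)/t, t, w)$ throughout the support, so the substitution yields $\mathcal{I}(\Phi, \chi_s)(\xi) = \chi(u)\,\mathcal{I}(\Phi, \chi_s)(\xi)$.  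Hence $\mathcal{I}$ vanishes unless $\chi$ is trivial on $1 + \varpi^M \OO$, and we may take $\mathfrak{f} := \varpi^M \OO$.

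For the support in $\xi$, for each fixed $t$ the inner $w$-integral $A(t,\xi) := \int_{V(F)} \Phi(Q(w)/t, t, w) \psi(\langle \xi, w\rangle/t)\,dw$ is (up to rescaling) the Fourier transform at $\xi/t$ of a function in $C_c^\infty(V(F))$, hence vanishes outside a bounded set.  Quantitatively, the substitution $w \mapsto w + a$ with $a \in \varpi^N V(\OO)$ gives $A(t,\xi) = \psi(\langle \xi, a\rangle/t) A(t,\xi)$ whenever the integrand is invariant under the translation; expanding $Q(w+a)-Q(w) = \langle w, Ja\rangle + Q(a)$ and using $|w| \leq q^{L'}$ shows this invariance holds for $N \geq \max(N_\Phi,\ N_\Phi + L' - v(J) + v(t))$, where $v(J)$ is the smallest valuation among the entries of $J$.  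Consequently $A(t,\xi) = 0$ unless $v(\xi_i) \geq d - N + v(t)$ for every coordinate, where $d$ is the conductor of $\psi$.  Minimizing the right-hand side over both regimes of $N$ and over the support condition $v(t) \geq -L$ produces an integer $m$ depending only on $\Phi$, $J$, $\psi$ such that $\mathcal{I} = 0$ unless $v(\xi_i) \geq m$ for all $i$; one then takes $\beta = \varpi^{-m}$.

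The main obstacle is this last minimization: the invariance threshold $N$ grows linearly in $v(t)$ as $|t| \to 0$, which would naively force the condition on $\xi$ to become more and more restrictive for small $|t|$.  The resolution is that $v(t)$ appears with the opposite sign in the Fourier support inequality $v(\xi_i) \geq d - N + v(t)$, so the two contributions cancel in the small-$|t|$ regime; the binding constraint comes instead from the other endpoint $v(t) = -L$ of the support of $\Phi$ in the second variable, yielding a bound that depends only on the data.
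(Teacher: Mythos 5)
Your argument for the absolute convergence and for the conductor bound is essentially the paper's: both exploit the invariance of the integrand under the simultaneous scaling $(t,w)\mapsto(ut,uw)$ for $u$ close to $1$, which forces $\chi$ to be trivial on $1+\varpi^k\OO$ for $k$ depending only on $\Phi$. Where you genuinely diverge is the support bound on $\xi$, and here the paper's route is noticeably lighter. You translate $w$ by a fixed vector $a\in\varpi^N V(\OO)$; since the first argument of $\Phi$ then picks up $\langle w,Ja\rangle/t$ and $Q(a)/t$, the admissible threshold $N$ must grow linearly in $v(t)$ as $|t|\to 0$, and you have to run the minimization over $t$ that you describe. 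The paper avoids this entirely by translating by the \emph{$t$-scaled} vector $t\varpi^k Y$ with $Y\in V(\OO)$: the perturbation of the first slot is then $\varpi^k\langle Y,Jw\rangle + t\varpi^{2k}Q(Y)$, which is small uniformly in $t$ on the support of $\Phi$ for a single fixed $k$, and the phase becomes $\psi(\varpi^k\langle\xi,Y\rangle)$, independent of $t$. The identity $\mathcal{I}(\Phi,\chi_s)(\xi)=\psi(\varpi^k\langle\xi,Y\rangle)\,\mathcal{I}(\Phi,\chi_s)(\xi)$ then holds at the level of the full integral, with no case analysis over regimes of $|t|$. Your version does go through — as you observe, the $v(t)$'s cancel in the Fourier-support inequality — but you should also record the constraint coming from the $Q(a)/t$ term, roughly $2N\geq N_\Phi - v(J)+v(t)$, since the stated threshold $N\geq N_\Phi + L'-v(J)+v(t)$ does not by itself imply it near the endpoint $v(t)=-L$; including it changes the constant but not the conclusion.
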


\begin{proof}
The absolute convergence statement is clear.  For the remainder of the proof we assume $\mathrm{Re}(s)>0$.  It suffices to prove the vanishing statement under this additional assumption.

For $k>0$ let $\OO^\times(\varpi^k)=1+\varpi^k\OO$.  Since $\Phi \in C_c^\infty(F^2 \times V(F))$ we can and do choose $k$ large enough that 
$$
\Phi(u_1x_1,u_2x_2,u_3w)=\Phi(x_1,x_2,w).
$$
for $u_1,u_2,u_3 \in \OO^\times(\varpi^k)$ and $(x_1,x_2,w) \in F^2 \times V(F)$.
For $u \in \OO^{\times}(\varpi^k)$ consider
$$
\int_{V(F)}\Phi\left(\frac{Q(w)}{ut},ut,w \right)\psi\left(\frac{\langle \xi,w \rangle}{ut} \right)dw.
$$
Taking a change of variables $w \mapsto uw$ we see that this integral is equal to 
$$
\int_{V(F)}\Phi\left(\frac{Q(w)}{t},t,w \right)\psi\left(\frac{\langle \xi,w \rangle}{t} \right)dw.
$$
Taking $\mathfrak{f}=\varpi^k\OO$ the claim that $\mathcal{I}(\Phi,\chi_s)$ vanishes unless the conductor of $\chi$ divides $\mathfrak{f}$ follows.

Let $Y \in V(\OO)$.  For $k$ sufficiently large we have 
\begin{align*}
&\int_{F^{\times } \times V(F)}\Phi\left(\frac{Q(w)}{t},t,w\right)\psi\left(\frac{\langle \xi, w\rangle}{t} \right)dw \chi_s(t)dt^\times\\
&=\int_{F^{\times } \times V(F)}\Phi\left(\frac{Q(w)}{t}-\varpi^{2k}tQ(Y)-\varpi^k\langle Y,Jw\rangle,t,w-t\varpi^kY\right)\psi\left(\frac{\langle \xi, w\rangle}{t} \right)dw \chi_s(t)dt^\times.
\end{align*}
Taking a change of variable $w \mapsto w+t \varpi^kY$ we see that this is equal to 
\begin{align*}
\int_{F^{\times } \times V(F)}\Phi\left(\frac{Q(w)}{t},t,w\right)\psi\left(\frac{\langle \xi, w\rangle}{t} +\varpi^k \langle \xi,Y \rangle\right)dw \chi_s(t)dt^\times.
\end{align*}
The claim that $\mathcal{I}(\Phi,\chi_s)$ is supported in $\beta^{-1}V(\OO)$ for some $\beta$ depending only on $\Phi$ follows.

\end{proof}

The following is the main theorem of this section:

\begin{thm} \label{thm:nonarch:bound}
The quotient
$$
\frac{\mathcal{I}(\Phi,\chi_s)(\xi)}{L(s+1,\chi)^{\delta_\xi}}
$$
admits a holomorphic continuation to $\mathrm{Re}(s)>-n/2$.  For 
$$
\sigma_1>\mathrm{Re}(s) >\sigma_2>-n/2
$$ 
it is bounded by a constant depending on $\sigma_1,\sigma_2,\Phi$ times
\begin{align*}
\max(1,(1-\delta_\xi)|\xi|^{1-n/2}).
\end{align*}
\end{thm}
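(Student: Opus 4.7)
By Lemma \ref{lem:nonarch:support} we may assume $\xi \in \beta^{-1}V(\OO)$ and the conductor of $\chi$ divides a fixed ideal $\mathfrak{f}$, both depending only on $\Phi$. The strategy is to Fourier-invert $\Phi$ in its first argument, complete the square in $w$, apply the nonarchimedian Weil formula (Lemma \ref{lem:nonarch:Gaussian}) to the resulting Gaussian $v$-integral, and then reduce the problem to a Tate-style local zeta integral in $t$.

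Set $\Phi_1(x,y,w) := \int_F \Phi(u,y,w)\bar\psi(xu)\,du$; since $F$ is nonarchimedian, $\Phi_1 \in C_c^\infty(F^2 \times V(F))$. Fourier inversion in the first variable gives
\[
\mathcal{I}(\Phi,\chi_s)(\xi) = \int_{F \times F^\times \times V(F)} \Phi_1(x,t,w)\psi\!\left(\frac{xQ(w)+\langle\xi,w\rangle}{t}\right)\chi_s(t)\,dw\,dt^\times\,dx,
\]
converging absolutely for $\mathrm{Re}(s) > 0$. Since $\{x = 0\}$ has measure zero in $F$, restrict to $x \neq 0$ and complete the square via $w = v - x^{-1}J^{-1}\xi$, converting the phase into $xQ(v)/t - Q^\vee(\xi)/(xt)$. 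Apply Plancherel together with Lemma \ref{lem:nonarch:Gaussian} to rewrite the $v$-integral as $\gamma_v((x/t)Q)|x/t|^{-n/2}|\det J|^{-1/2}$ times the partial Fourier transform of $\Phi_1$ in the third variable against an auxiliary Weil-type Gaussian. The further substitution $x = yt$ (Jacobian $|t|$, combining with $\chi_s(t)\,dt^\times$ to produce $\chi_{s+1}(t)\,dt^\times$) yields
\[
\mathcal{I}(\Phi,\chi_s)(\xi) = |\det J|^{-1/2} \int_{F^\times \times F^\times} \gamma_v(yQ)\,|y|^{-n/2}\, H(y,t,\xi)\, \bar\psi\!\left(\frac{Q^\vee(\xi)}{yt^2}\right)\, \chi_{s+1}(t)\,dy\,dt^\times,
\]
where $H(y,t,\xi) := \int_{V(F)} \psi(\langle J^{-1}\xi,\eta\rangle/(yt))\, \widehat{\Phi_1}^{(3)}(yt,t,\eta)\, \bar\psi(Q^\vee(\eta)/y)\,d\eta$, a locally constant function of $(y,t)$ compactly supported in $(yt,t)$ uniformly in $\xi$.

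The inner $t$-integral is a Tate local zeta integral against a compactly supported locally constant function: for $\xi = 0$ the oscillatory factor is trivial and the only possible pole is at $s = 0$, captured exactly by $L(s+1,\chi)$, which accounts for the $L(s+1,\chi)^{\delta_\xi}$ in the denominator; for $\xi \neq 0$ the rapid oscillation of $\bar\psi(Q^\vee(\xi)/(yt^2))$ as $t \to 0$ eliminates this pole and renders the $t$-integral entire. The outer $y$-integral then converges absolutely for $\mathrm{Re}(s) > -n/2$: for $|y|$ small, the oscillation $\bar\psi(Q^\vee(\eta)/y)$ in $H$ yields (via a second Weil-formula/stationary-phase step applied to the $\eta$-integral near the critical point $\eta = 0$) the bound $|H| \ll |y|^{n/2}$, which cancels the $|y|^{-n/2}$ factor; for $|y|$ large, the support constraint $|yt| \leq M$ from $\widehat{\Phi_1}^{(3)}$ bounds $|y|$ above. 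Uniformity in $\chi_s$ is immediate since all estimates are support-based and independent of $\chi$.

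The $|\xi|^{1-n/2}$ decay for large $|\xi|$ follows from a support argument: the character $\psi(\langle J^{-1}\xi,\eta\rangle/(yt))$ in the definition of $H$ forces, via the compact $\eta$-support of $\widehat{\Phi_1}^{(3)}$, that $H(y,t,\xi) = 0$ unless $|yt|$ is at least a fixed constant multiple of $|\xi|$. Restricting the $y$-integration to $|y| \gtrsim |\xi|/|t|$ and combining with the $|y|^{-n/2}$ factor yields, after an elementary computation balancing the two support constraints, the asserted $|\xi|^{1-n/2}$ bound following the $t$-integration (which converges because $\mathrm{Re}(s)+n/2 > 0$). The principal obstacle is carrying out this last step cleanly: one must balance the competing support constraints $|yt| \gtrsim |\xi|$ (from the character) and $|yt| \leq M$ (from $\widehat{\Phi_1}^{(3)}$), track the lattice structure of the supports through the several changes of variable, and verify that the Weil gamma factor $\gamma_v(yQ)$ and the finite Fourier sums defining $H$ do not interfere with these bounds.
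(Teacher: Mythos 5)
Your approach is fundamentally the same as the paper's: Fourier-invert $\Phi$ in its first argument, complete the square in $w$, apply the nonarchimedian Weil formula (Lemma \ref{lem:nonarch:Gaussian}) to the resulting Gaussian, and extract the $|\xi|^{1-n/2}$ decay from a support constraint before summing a geometric series in $t$. What differs is the organization of the $(x,t)$-integral. The paper first reduces $\Phi_1$ to ball indicators $\one_{(\beta_1,0,\mu)\varpi^{-a}+\varpi^k(\OO^2\times V(\OO))}$ and then performs a \emph{discrete} decomposition of the $x$-integral into $\sum_{i\geq 0}\sum_{x\in(\OO/\varpi^{-i}t)^\times}$ (cf. Proposition \ref{prop:reduction}), with the $|\xi|$-decay coming from the observation that the $i$th summand vanishes unless $\xi\in\varpi^{i+a}V(\OO)$. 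You instead keep $\Phi_1$ general and make the \emph{continuous} substitution $x=yt$, placing the support constraints on $(y,t)$. Both work; the paper's discrete reduction makes the bookkeeping of the lattice supports more transparent, while yours makes the analogy with the archimedian stationary-phase argument in \S 4 cleaner.

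Two points in your sketch need repair. First, for $\xi=0$ and $\chi$ unramified trivial, the pole of $\int_{|t|\leq 1}\chi_{s+1}(t)dt^\times = L(s+1,\chi)$ is at $s=-1$, not $s=0$. Second, your asserted support constraint ``$H(y,t,\xi)=0$ unless $|yt|\gtrsim|\xi|$'' is correct only in the regime $|y|>1$, where the $\eta$-integrand has no Gaussian oscillation and the linear character forces $\xi/(yt)\in\varpi^{-m}V(\OO)$. For $|y|\leq 1$, applying Lemma \ref{lem:prelim:comp} (with $Q$ replaced by $Q^\vee$) to the $\eta$-integral shows the vanishing constraint is $|\xi|\leq q^m|t|$, independent of $y$, while the $|y|^{-n/2}$ in front is cancelled \emph{exactly} by the $|y|^{n/2}$ from the Weil factor. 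So ``restricting the $y$-integration to $|y|\gtrsim|\xi|/|t|$'' is a step that applies only in the outer range $|y|>1$; the two regimes must be treated separately, with the $t$-constraint $|t|\gtrsim|\xi|$ doing the work when $|y|\leq 1$. Once this case split is made, the estimate $\int_{|\xi|\lesssim|t|\leq 1}|t|^{\mathrm{Re}(s)+1}dt^\times\ll\max(1,|\xi|^{\mathrm{Re}(s)+1})\ll\max(1,|\xi|^{1-n/2})$ for $\mathrm{Re}(s)>-n/2$ closes the argument as you intend.
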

\noindent Here if $\xi=0$ we interpret $(1-\delta_\xi)|\xi|^{1-n/2}$ as meaning $0$.

  We begin with a preliminary proposition:
\begin{prop} \label{prop:reduction}
Assume $\psi$ is unramified and that $J$ is a diagonal matrix in $\gl_n(\OO)$.  Assume moreover that $c,k \in \ZZ_{>0}$, $c>k$, $\Psi \in C^\infty(V(\OO))$ and that $\beta =0$ or $\beta \in \OO^\times$.  As a function of $s$
\begin{align*}
L(s+1,\chi)^{-\delta_\xi}\int_{|t|<q^{-c}}\int_{F^\times \times V(F)}
\one_{\varpi^{k}\OO}(x-\beta_1)\Psi(w)\psi\left(\frac{xQ(w)+\langle \xi,w \rangle}{t} \right)dwdx\chi_s(t)dt^\times
\end{align*}
admits a holomorphic continuation to $\mathrm{Re}(s)>-n/2$ that is bounded by a constant depending on $\sigma_1,\sigma_2,c,\beta,k$ times 
$$
\max(1,(1-\delta_\xi)|\xi|^{1-n/2})
$$
provided that $\sigma_1>\mathrm{Re}(s) >\sigma_2>-n/2$.
\end{prop}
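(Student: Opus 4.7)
The plan is to evaluate the inner $w$-integral exactly using the nonarchimedean Weil formula for quadratic Gaussians, and then analyze the resulting double integral in $(x,t)$ case by case. First I would complete the square: for $x \in F^\times$,
\[
xQ(w) + \langle \xi, w\rangle \;=\; xQ(w + x^{-1}J^{-1}\xi) \;-\; c\,Q^\vee(\xi)/x,
\]
where $c \in F^\times$ is a universal constant fixed by the normalization of $Q^\vee$. Substituting $w \mapsto w - x^{-1}J^{-1}\xi$ extracts a phase $\bar\psi(cQ^\vee(\xi)/(xt))$ that is independent of $w$, leaving a pure quadratic-phase integral $\int_{V(F)} \Psi(w - x^{-1}J^{-1}\xi)\psi(xQ(w)/t)\,dw$.

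Next I would apply Plancherel together with the nonarchimedean quadratic Gaussian Fourier transform underlying Lemma \ref{lem:nonarch:Gaussian} (the analog of Proposition \ref{prop:arch:Gaussian}): the Fourier transform of $w \mapsto \psi(xQ(w)/t)$ equals $\gamma_v(xQ/t)|x/t|^{-n/2}|\det J|^{-1/2}\bar\psi((t/x)Q^\vee(w))$. Because $\Psi \in C^\infty(V(\OO))$ is compactly supported and locally constant, $\widehat\Psi$ is also compactly supported; and for $|t|<q^{-c}$ with $c$ large enough relative to $k$ and the conductor of $\widehat\Psi$, the phase $\bar\psi((t/x)Q^\vee(w))$ reduces to $1$ on $\mathrm{supp}(\widehat\Psi)$. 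Fourier inversion then evaluates the $w$-integral to $\Psi(-x^{-1}J^{-1}\xi)$, giving the exact identity
\[
I(x,t,\xi) \;=\; |\det J|^{-1/2}\gamma_v(xQ/t)|x|^{-n/2}|t|^{n/2}\bar\psi(cQ^\vee(\xi)/(xt))\Psi(-x^{-1}J^{-1}\xi).
\]

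With this formula in hand I would split into two cases. If $\beta \in \OO^\times$ then $|x| \equiv 1$, the support of $\Psi$ confines $|\xi|$ to a bounded set, and the bound reduces to absolute convergence of $\int_{|t|<q^{-c}}|t|^{n/2}\chi_s(t)dt^\times$, which holds holomorphically for $\mathrm{Re}(s)>-n/2$ uniformly in $\chi$. If $\beta = 0$ then $|x| \leq q^{-k}$; after substituting $t = xu$ (so $u = t/x$) the double integral essentially factors, with the $x$-integral $\int_{x \in \varpi^k\OO - 0}\chi_s(x)\,dx$ evaluating to an elementary multiple of $L(s+1,\chi)\chi_{s+1}(\varpi^k)$ (using that the $\gamma_v$-factor becomes $\mathcal{G}_v(u)$ in the $u$-variable). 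This is precisely the pole cancelled by the normalization $L(s+1,\chi)^{-\delta_\xi}$ when $\xi=0$, while the resulting $u$-integral $\int(\mathcal{G}_v\chi)_{s+n/2}(u)du^\times$ over a bounded region converges for $\mathrm{Re}(s)>-n/2$. When $\xi \neq 0$ and $\beta = 0$, the support condition $\Psi(-x^{-1}J^{-1}\xi) \neq 0$ forces $|x| \gtrsim |\xi|$; integrating $|x|^{-n/2}$ over the annulus $|\xi| \lesssim |x| \leq q^{-k}$ yields the factor $O(|\xi|^{1-n/2})$ for $n \geq 4$, which combined with the $|t|^{n/2}$-weighted $t$-integral produces the required bound.

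The principal obstacle is careful bookkeeping: tracking the Weil $\gamma$-factor $\gamma_v(xQ/t)$ (a nontrivial character of $x/t$, not a sign) through the double integral, verifying that the Fourier-inversion simplification is valid over the entire range $|t|<q^{-c}$ (which may force the implicit constant to depend on $c-k$ and the conductor of $\widehat\Psi$), and correctly accounting for the non-factorization of the $(x,t)$-region when isolating the local $L$-factor pole against the normalization $L(s+1,\chi)^{-\delta_\xi}$ in the $\xi=0$, $\beta=0$ case.
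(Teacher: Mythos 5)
Your approach shares the paper's key idea — evaluate the $w$-integral via the $p$-adic quadratic Gaussian (Weil) formula — and your treatment of the $\beta\in\OO^\times$ case is essentially correct and matches the paper's. However, in the $\beta=0$ case there is a genuine gap. The claimed exact identity
$I(x,t,\xi)=|\det J|^{-1/2}\gamma(xQ/t)\,|t/x|^{n/2}\bar\psi\bigl(Q^\vee(\xi)/(xt)\bigr)\Psi(-x^{-1}J^{-1}\xi)$
rests on the phase $\bar\psi\bigl((t/x)Q^\vee(w)\bigr)$ trivializing on $\mathrm{supp}\,\widehat\Psi$, which requires $|t/x|$ to lie below a threshold determined by $\Psi$ and $J$ alone. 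When $\beta\in\OO^\times$ this holds because $|x|=1$ and $|t|<q^{-c}$ with $c$ large. But when $\beta=0$, the range $x\in\varpi^k\OO\setminus\{0\}$ has no lower bound on $|x|$, so $|t/x|$ is unbounded no matter how $c$ is chosen relative to $k$ and to the conductor of $\widehat\Psi$. On the complementary region the identity fails, $\Psi(-x^{-1}J^{-1}\xi)\neq0$ is no longer the correct vanishing criterion, and the $u$-integral you invoke is not over a bounded region: after $t=xu$ the domain is $|u|<q^{-c}/|x|$, which grows without bound as $|x|\to0$, so the double integral does not factor in $(x,u)$-coordinates. You flag the validity of the Fourier-inversion simplification as a bookkeeping concern, but it is the crux of the $\beta=0$ case, not a constant to chase.

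The paper circumvents this by decomposing $\varpi^k\OO\setminus\{0\}=\bigsqcup_{i\geq0}\varpi^{i+k}\OO^\times$ into multiplicative shells before applying the Gaussian transform. After rescaling, $x$ is a unit, the Fourier factor is $|t|^{n/2}$ (not $|t/x|^{n/2}$), the phase $\bar\psi(tQ^\vee(w)/x)$ has $|t/x|=|t|<q^{-c+k}$ uniformly, and the shell index contributes $\chi_{s+1}(\varpi^i)$, with the $i$-th term vanishing unless $\xi\in\varpi^{i+a}V(\OO)$ for a fixed $a$. Each shell is then bounded uniformly by $\zeta(\mathrm{Re}(s)+n/2)\ll_{\sigma_1,\sigma_2}1$, and the $i$-sum yields $L(s+1,\chi)$ (cancelled exactly by the normalization) when $\xi=0$, and a finite geometric sum $\ll|\xi|^{1-n/2}$ when $\xi\neq0$. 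Your plan needs such a shell decomposition, or an equivalent device, to control the contribution from the region $|t/x|\gtrsim1$ before the claimed factorization and convergence assertions can be justified.
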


\begin{proof}

Throughout the proof we assume that 
\begin{align} \label{sig:bound}
\sigma_1>\mathrm{Re}(s)>\sigma_2>-n/2.
\end{align}

We start with the integral 
\begin{align} \label{int:to:bound}
\int_{|t|<q^{-c}}\int_{F^\times \times V(F)}\Psi(w)
\one_{\varpi^{k}\OO}(x-\beta)\psi\left(\frac{xQ(w)+\langle \xi,w \rangle}{t} \right)dwdx\chi_s(t)dt^\times.
\end{align}  
We will often use the fact that for $t \in \OO \cap F^\times$ the Fourier transform of the distribution $w \mapsto \psi\left(\frac{Q(w)}{t} \right)$ on $V(F)$ is 
\begin{align} \label{FT}
\gamma(t^{-1}Q)\frac{|t|^{n/2}}{\left|\det J\right|^{1/2}}\bar{\psi}\left(tQ^{\vee}(w) \right)
\end{align}
where $|\gamma(t^{-1}Q)|=1$ \cite[\S 14]{Weil:Certains:groupes}.

Assume $\beta \in \OO^\times$.  Then by the Fourier transform computation just mentioned we have that \eqref{int:to:bound} is equal to 
\begin{align*}
|\det J|^{-1/2}&\int_{|t|<q^{-c}}\int_{F^\times \times V(F)}\widehat{\Psi}\left(\frac{\xi}{t}- w\right)
\one_{\varpi^{k}\OO}(x-\beta)\gamma(xt^{-1}Q)\bar{\psi}\left(\frac{tQ(w)}{x} \right)dwdx\chi_{s+n/2}(t)dt^\times\\
&\ll_{\Psi} \zeta(\mathrm{Re}(s)+n/2) \ll_{\sigma_1,\sigma_2}1.
\end{align*}

Assume now that $\beta=0$.  We then write \eqref{int:to:bound} as $q^k$ times 
\begin{align}
&\int_{|t|<q^{-c}}\int_{F^\times \times V(F)}\Psi(w)\psi\left(\frac{\varpi^k xQ(w)+\langle \xi,w \rangle}{t} \right)dwdx\chi_s(t)dt^\times \nonumber \\
&=\chi_s(\varpi^k)\int_{|t|<q^{-c+k}}\int_{F^\times \times V(F)}\Psi(w)
\psi\left(\frac{ xQ(w)+\langle \varpi^{-k}\xi,w \rangle}{t} \right)dwdx\chi_s(t)dt^\times \nonumber\\
&=\chi_s(\varpi^k)\int_{|t|<q^{-c+k}}\int_{ V(F)} \Psi(w) \sum_{i=0}^{v(t)}\sum_{x \in (\OO/t\varpi^{-i})^\times}\psi\left(\frac{ \varpi^ixQ(w)+\langle \varpi^{-k}\xi,w \rangle}{t} \right)dw\chi_{s+1}(t)dt^\times \nonumber\\
&=\chi_s(\varpi^k)\sum_{i=0}^\infty\chi_{s+1}(\varpi^{i}) \int_{|t|<q^{-c+k}}\int_{ V(F)}\Psi(w)
\sum_{x \in (\OO/t)^\times}\psi\left(\frac{ xQ(w)+\langle \varpi^{-i-k}\xi,w \rangle}{t} \right)dw\chi_{s+1}(t)dt^\times. \nonumber
\end{align}
The $i$ summand here vanishes unless $\xi \in \varpi^{i+a}V(\OO)$ for some $a \in \ZZ$ depending only on $k$ and $\Psi$.

We now use the Fourier transformation computation \eqref{FT} to write this as 
\begin{align} \label{all}
&\chi_s(\varpi^k)\sum_{i=0}^\infty\chi_{s+1}(\varpi^{i}) \int_{|t|<q^{-c+k}}\sum_{x \in (\OO/t)^\times}\frac{\gamma(xt^{-1}Q)}{|\det J|^{1/2}}\\& \times \int_{ V(F)}\widehat{\Psi}\left(\frac{\xi}{\varpi^{i+k}t}-w \right)\bar{\psi}\left(\frac{tQ^{\vee}(w)}{x} \right)
dw\chi_{s+1+n/2}(t)dt^\times. \nonumber 
\end{align}
We note that
\begin{align} \label{inside}
&\int_{|t|<q^{-c+k}}
\sum_{x \in (\OO/t)^\times}\frac{\gamma(xt^{-1}Q)}{|\det J|^{1/2}}\int_{ V(F)}\widehat{\Psi}\left(\frac{\xi}{\varpi^{i+k}t}-w \right)\bar{\psi}\left(\frac{tQ^{\vee}(w)}{x} \right)
dw\chi_{s+1+n/2}(t)dt^\times\\
&\ll_{J,\Psi,c,k} \zeta(\mathrm{Re}(s)+n/2) \ll_{\sigma_1,\sigma_2} \nonumber 1.
\end{align}
  Since the $i$th summand in \eqref{all} vanishes unless $\xi \in \varpi^{i+a}V(\OO)$ we deduce that if $\xi \neq 0$ then \eqref{all} is bounded by a constant depending on $c$, $k$, $\sigma_1$, $\sigma_2$, $\Psi$ times 
$$
\max(1,|\xi|^{1-n/2})
$$
if \eqref{sig:bound} is valid.  This yields the theorem in this case.

Now assume that $\xi=0$.  Then \eqref{all} is equal to 
\begin{align*}
\chi_s(\varpi^k)L(s+1,\chi)\int_{|t|<q^{-c+k}}
\sum_{x \in (\OO/t)^\times}\frac{\gamma(xt^{-1}Q)}{|\det J|^{1/2}}\int_{ V(F)}\widehat{\Psi}\left(-w \right)\bar{\psi}\left(\frac{tQ^{\vee}(w)}{x} \right)
dw\chi_{s+1+n/2}(t)dt^\times. 
\end{align*}
This divided by $L(s+1,\chi)$ is bounded by a constant depending on $\Psi,\sigma_1,\sigma_2$ provided that \eqref{sig:bound} is valid.
\end{proof}

\begin{proof}[Proof of Theorem \ref{thm:nonarch:bound}]
 Temporarily write
$\mathcal{I}(\psi)(\xi)$ for $\mathcal{I}(\Phi,\chi_s)(\xi)$ defined with respect to $\psi$, and  write $\psi_c(x):=\psi(cx)$ for $c \in F^\times$.  One has
$$
\mathcal{I}(\psi_c)(\xi)=\mathcal{I}(\psi)(c\xi).
$$
It therefore suffices to prove the theorem in the special case where $\psi$ is unramified, which we henceforth assume.

Upon taking a change of variables in $w$ we can assume that $J$ is diagonal \cite[Theorem 3.5]{Scharlau:Quadratic:Hermitian}. This also entails replacing $\xi$ by $A\xi$ for some $A \in \GL_n(F)$, but this is harmless.   Absorbing powers of $\varpi$ into $\Phi$ we can also assume that $J \in \gl_n(\OO)$. 

Let $\Phi_1(x,y,w):=\int_F \Phi(t,y,w)\bar{\psi}(xt)dt$ be the inverse Fourier transform of $\Phi$ in the first variable.  By Fourier inversion we then have
\begin{align*}
\mathcal{I}(\Phi,\chi_s)(\xi)=\int_{F \times F^\times \times V(F)} \Phi_1(x,t,w)\psi\left(\frac{xQ(w)+\langle \xi, w \rangle}{t}\right)\chi_s(t)dt^\times dxdw.
\end{align*}
We can and do assume that $\Phi_1$ is equal to 
$$
\one_{(\beta_1,\beta_2,\mu)\varpi^{-a}+\varpi^k(\OO^2 \times V(\OO))}
$$
with $a \in \ZZ_{ \geq 0}$, $\beta_1,\beta_2 \in \OO$, $\mu \in V(\OO)$, and $k \geq 0$
since every element of $C_c^\infty(F^2 \times V(F))$ is a finite sum of functions of this form.  If $\beta_2\varpi^{-a} \not \in \varpi^k\OO$ then the integral over $t$ in $\mathcal{I}(\Phi,\chi_s)(\xi)$ is compactly supported.  In this case the bound asserted by Theorem \ref{thm:nonarch:bound} is trivial to obtain.  We therefore can and do assume that $\beta_2 =0$.  

 We then have
\begin{align*}
\mathcal{I}(\Phi,\chi_s)(\xi)
&=\int_{\varpi^k\OO}\int_{F^\times \times V(F)}
\one_{\OO \times V(\OO)}\left(\frac{x-\beta_1\varpi^{-a}}{\varpi^k},\frac{w-\mu\varpi^{-a}}{\varpi^{k}}\right)\\& \times \psi\left(\frac{xQ(w)+\langle \xi,w \rangle}{t} \right)dwdx\chi_s(t)dt^\times.
\end{align*}

Taking a change of variables $(x,t,w) \mapsto 
(\varpi^{-a}x,\varpi^kt,\varpi^{-a}w)$ we see that this is equal to 
$\chi_s(\varpi^k)q^{an +a}$ times
\begin{align*}
\int_{\OO}\int_{F^\times \times V(F)}
\one_{\OO \times V(\OO)}\left(\frac{x-\beta_1}{\varpi^{k+a}},\frac{w-\mu}{\varpi^{k+a}}\right)
\psi\left(\frac{xQ(w)+\langle \varpi^{2a+k}\xi,w \rangle}{\varpi^{3a+k}t} \right)dwdx
\chi_s(t)dt^\times.
\end{align*}
The contribution of $|t| > q^{-c}$ is bounded
by a constant depending on $c$, $k$, $a$, and $\sigma_1,\sigma_2$.  Thus it suffices to bound, for any $\Psi \in C_c^\infty(V(\OO))$, the integral
\begin{align*}
\int_{|t|<q^{-c}}\int_{F^\times \times V(F)}
\one_{\varpi^{k}\OO}(x-\beta_1)\Psi(w)\psi\left(\frac{xQ(w)+\langle \xi,w \rangle}{t} \right)dwdx\chi_s(t)dt^\times
\end{align*}
for arbitrary $k>0$ for any fixed $c$ (which is allowed to depend on $k$ and $\sigma_1,\sigma_2$).  

Finally we check that it is enough to consider the case where $\beta \in \OO^\times$ or $\beta =0$.  Assume that $\beta \in \varpi^i\OO$ with $1 \leq i <k$.  We 
can and do assume that $i<c$.
Then taking a change of variables 
$x \mapsto \varpi^ix$ and $t \mapsto \varpi^it$ 
we see that the integral above is equal to $\chi_s(\varpi^i)q^i$ times
\begin{align*}
\int_{|t|<q^{-c+i}}\int_{F^\times \times V(F)}\Psi(w)
\one_{\varpi^{k-i}\OO}(x-\beta_1\varpi^{-i})\psi\left(\frac{xQ(w)+\langle \varpi^{-i}\xi,w \rangle}{t} \right)dwdx\chi_s(t)dt^\times.
\end{align*}
We rename variables $c \mapsto c-i$, $k \mapsto k+i$ and $\xi \mapsto \varpi^{-i}\xi$ and apply Proposition \ref{prop:reduction} to deduce the theorem.
\end{proof}

\section{The unramified computation}

\label{sec:unram:comp}

We work locally in this section at a nonarchimedian, non-dyadic place $v$ which is omitted from notation. We assume that $F$ is absolutely unramified at $v$, that $\psi$ is unramified, and that $J \in \GL_n(\OO)$, where $J$ is the matrix of $Q$.
  We assume that the Haar measure on $V(F)$ is normalized so that $V(\OO)$ has measure $1$.  This is the self-dual Haar measure with respect to the pairing $(w_1,w_2) \mapsto \psi(\langle w_1,w_2 \rangle)$ since $\psi$ is unramified.  The main theorem of this section, Theorem \ref{thm:nonarch:comp}, computes $\mathcal{I}(\one_{\OO^2 \times V(\OO)},\chi_s)$.  To prove it we first prove a series of lemmas.

\begin{lem} \label{lem:nondyadic}
 Let $u \in \OO^\times$, $t \in \OO$ and $\xi_0 \in F$.  One has
\begin{align}
\int_{\OO}\psi\left(\frac{u y^2+\xi_0y}{t} \right)dy=\one_{\OO}(\xi_0) \psi\left(\frac{-\xi_0^2}{4ut} \right)\int_{\OO} \psi\left(\frac{uy^2}{t} \right)dy.
\end{align}
The integral
$\int_{\OO} \psi\left(\frac{uy^2}{t} \right)dy$ has complex norm $|t|^{1/2}$ and satisfies
\begin{align*}
\int_{\OO} \psi\left(\frac{uy^2}{t} \right)dy=\left(\frac{u}{\varpi} \right)^{v(t)}
\int_{\OO} \psi\left(\frac{y^2}{t} \right)dy
\end{align*}
where $\left(\frac{\cdot}{\varpi} \right)$ is the Legendre character.  
\end{lem}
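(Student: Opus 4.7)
My plan is to prove the first (completing-the-square) identity by a case split on whether $\xi_0 \in \OO$, and to establish the magnitude and Legendre-character identities for the base Gauss integral by reducing to the classical $k=1$ case via a dimension-reduction recursion. Throughout I use that $v$ is nondyadic, so $2u \in \OO^\times$; absorbing a unit, write $t = u_t\varpi^k$ with $k=v(t)$.

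For the first identity, if $\xi_0 \in \OO$ then $c := \xi_0/(2u) \in \OO$, so $y \mapsto y - c$ is a measure-preserving automorphism of $\OO$ transforming the phase into $(uy^2 - \xi_0^2/(4u))/t$, which yields the stated formula immediately. If $\xi_0 \notin \OO$, I will show the integral vanishes by setting $m = \lceil k/2 \rceil$ and decomposing $y = y_0 + \varpi^m y_1$ with $y_0 \in \OO/\varpi^m$ and $y_1 \in \OO$. The $y_1^2$ contribution is $u\varpi^{2m}y_1^2/t \in \OO$ by the choice of $m$, so integrating $y_1$ first produces the character $y_1 \mapsto \psi\bigl((2uy_0 + \xi_0)\varpi^m y_1/t\bigr)$ on $\OO$; orthogonality forces vanishing unless $v(2uy_0 + \xi_0) \ge k - m \ge 0$. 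But $v(y_0) \ge 0$ and $v(\xi_0) < 0$ give $v(2uy_0 + \xi_0) = v(\xi_0) < 0$, so the integral is zero.

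For the Gauss-integral part I would study $G(u,k) := \sum_{y \in \OO/\varpi^k}\psi(uy^2/\varpi^k)$ (so the integral is $q^{-k}G(u,k)$ after absorbing $u_t$ into $u$) and prove the dimension reduction $G(u,k) = q \cdot G(u,k-2)$ for $k \ge 2$. Splitting $y = y_0 + \varpi^{k-1}y_1$ with $y_0 \in \OO/\varpi^{k-1}$ and $y_1 \in \OO/\varpi$, the $y_1$-sum forces $y_0 \in \varpi\OO/\varpi^{k-1}$; substituting $y_0 = \varpi z$ with $z \in \OO/\varpi^{k-2}$ gives back $q\cdot G(u,k-2)$. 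This reduces both claims to $k \in \{0,1\}$: the $k=0$ case is trivial, and for $k=1$ the classical rewriting $\sum_{y \in \FF_q}\psi(uy^2) = \sum_{x \in \FF_q^\times}\psi(ux)(1+\chi(x)) = \chi(u)G(\chi,\psi)$ supplies the Legendre factor, while $|G(1,1)|^2 = q$ follows from expanding the square and applying orthogonality after the substitution $y_1 = y_2 + z$.

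The main obstacle is really the $\xi_0 \notin \OO$ case of the first identity, where no integral completion of the square is available; the layer decomposition that converts the obstruction into nontriviality of an additive character on $\OO$ is the essential move. Everything else is either classical Gauss-sum bookkeeping or an immediate substitution available once $2u \in \OO^\times$.
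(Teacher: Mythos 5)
Your argument is correct and follows essentially the same route as the paper: completing the square for the first identity, a two-step dimension-reduction recursion (your $G(u,k) = q\,G(u,k-2)$ is exactly the paper's $\int_{\OO}\psi(uy^2/t)\,dy = q^{-1}\int_{\OO}\psi(uy^2/(t\varpi^{-2}))\,dy$), and the classical Legendre-symbol expansion for $v(t)=1$. The one place you go beyond the paper is the explicit treatment of $\xi_0\notin\OO$: the paper's change of variables $y\mapsto y-\xi_0/(2u)$ is an automorphism of $\OO$ only when $\xi_0\in\OO$, so the $\one_{\OO}(\xi_0)$ factor in the lemma is not actually produced by the argument as written; your layer decomposition $y=y_0+\varpi^m y_1$ with $m=\lceil v(t)/2\rceil$, which reduces the $y_1$-integral to an additive character whose conductor condition $v(2uy_0+\xi_0)\ge v(t)-m\ge 0$ fails because $v(\xi_0)<0$, cleanly supplies that missing piece.
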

\begin{proof}
 One has
\begin{align} \label{FT2}
\int_{\OO} \psi\left(\frac{uy^2 +\xi_0y}{t} \right)dy= \psi\left(\frac{-\xi_0^2}{4ut} \right)\int_{\OO} \psi\left(\frac{uy^2}{t} \right)dy
\end{align}
by a change of variables $y \mapsto y-\frac{\xi_0}{2u}$.
To compute the latter integral assume first that $v(t)=1$.  Then 
\begin{align}
\int_{\OO}\psi\left(\frac{uy^2}{t} \right)dy&=\int_{\OO^\times}\psi\left(\frac{uy^2}{t} \right)dy+q^{-1}\\
\nonumber &=q^{-1}+\int_{\OO^{\times}}\left(\left(\frac{y}{\varpi} \right) +1\right)\psi\left(\frac{uy}{t} \right)dy\\
\nonumber &=\int_{\OO^{\times}}\left(\frac{y}{\varpi} \right)\psi\left(\frac{uy}{t} \right)dy.
\end{align}
From this formula (and a change of variables $y \mapsto u^{-1}y$) it is evident that 
$$
\int_{\OO}\psi\left(\frac{uy^2}{t} \right)dy=\left(\frac{u}{\varpi} \right)\int_{\OO}\psi\left(\frac{y^2}{t} \right)dy.
$$
It is well-known that $|\int_{\OO}\psi\left(\frac{y^2}{t} \right)dy|_{\mathrm{st}}=|t|^{1/2}$ (see \cite[p. 147]{Ireland:Rosen}, for example) so the lemma is proven in the $v(t)=1$ case.

For $v(t)>1$ one has
\begin{align*}
&\int_{\OO}\psi\left(\frac{uy^2}{t} \right)dy=\int_{\OO^\times}\psi\left(\frac{uy^2}{t} \right)dy+\int_{\varpi \OO}\psi\left(\frac{uy^2}{t} \right)dy\\
&=|t|\sum_{\alpha \in (\OO/t \varpi^{-1})^\times}\sum_{x \in \OO/\varpi} \psi\left(\frac{u(\alpha^2+2\alpha t\varpi^{-1} x)}{t} \right)+q^{-1}\int_{\OO}\psi\left(\frac{uy^{2}}{t\varpi^{-2}} \right)dy\\
&=q^{-1}\int_{\OO}\psi\left(\frac{uy^{2}}{t\varpi^{-2}} \right)dy.
\end{align*}
Thus by induction
\begin{align} \label{by:ind}
\int_{\OO} \psi\left(\frac{uy^2}{t} \right)dy=\begin{cases}|t|^{1/2} &\textrm{ if }2|v(t)\\
(|t|q)^{1/2}\int_{\OO}\psi\left(\frac{uy^{2}}{t\varpi^{-(v(t)-1)}} \right) dy&\textrm{ if }2 \nmid v(t).\end{cases}
\end{align}
Given what we have already proven the lemma follows for $v(t)>1$ as well.
\end{proof}

For the remainder of this section we make the following important simplifying assumption:
\begin{itemize}
\item[(A)] The dimension $n$ is even.
\end{itemize}

Viewing $w \mapsto \psi(t^{-1}Q(w))$ as a character of second degree on $V(F)$ in the sense of \cite{Weil:Certains:groupes}, we can define (as in loc.~cit.) the factor
$$
\gamma(t^{-1}Q):=\gamma(\psi(t^{-1}Q(w))).
$$

\begin{lem} \label{lem:nonarch:Gaussian}  
For $t \in \OO\cap F^\times$ one has
\begin{align*}
\int_{V(\OO)}\psi\left(\frac{Q(w)}{t}\right)dw=|t|^{n/2}
\gamma(t^{-1}Q).
\end{align*}
The factor $\gamma(t^{-1}Q)$ depends only on the valuation of $t$, and hence can be viewed as an unramified character of $F^\times$.  More precisely 
$$
\gamma(t^{-1}Q)=\left(\frac{(-1)^{n/2}\det Q}{\varpi} \right)^{v(t)}
$$
where $\left(\frac{\cdot}{\varpi} \right)$ is the Legendre symbol.
\end{lem}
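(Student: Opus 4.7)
The approach is to compute $\int_{V(\OO)} \psi(Q(w)/t)\,dw$ explicitly and, in parallel, identify this integral with $|t|^{n/2}\gamma(t^{-1}Q)$ via Weil's Fourier transform formula for second-degree characters, then compare to extract the formula for $\gamma(t^{-1}Q)$.

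For the direct computation, I will use that $v$ is non-dyadic and $J\in\GL_n(\OO)$ to invoke classical orthogonalization (e.g.\ \cite[Thm.~3.5]{Scharlau:Quadratic:Hermitian}), producing $P\in\GL_n(\OO)$ with $P^{t}JP=\mathrm{diag}(u_1,\dots,u_n)$ and each $u_i\in\OO^\times$. The substitution $w\mapsto Pw$ preserves $V(\OO)$ and the Haar measure, reducing to the diagonal case $Q(w)=\tfrac12\sum_i u_i w_i^2$. The integral then factors as $\prod_{i=1}^n \int_{\OO}\psi\bigl((u_i/2t)y^2\bigr)\,dy$, and applying Lemma~\ref{lem:nondyadic} with $\xi_0=0$ and $u=u_i/2\in\OO^\times$ to each factor, together with the observation that $2^n$ is a square in $\OO^\times$ when $n$ is even, yields
$$\int_{V(\OO)}\psi(Q(w)/t)\,dw = \Bigl(\tfrac{\det J}{\varpi}\Bigr)^{v(t)} G(t)^n, \qquad G(t):=\int_{\OO}\psi(y^2/t)\,dy.$$
Since Lemma~\ref{lem:nondyadic} also gives $G(ut)=(u/\varpi)^{v(t)}G(t)$ for $u\in\OO^\times$ and $n$ is even, $G(t)^n$ depends only on $v(t)$. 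For $v(t)$ even, equation~\eqref{by:ind} gives $G(t)=|t|^{1/2}$, so $G(t)^n=|t|^{n/2}$. For $v(t)$ odd, iterating~\eqref{by:ind} reduces to $v(t)=1$, where $G(\varpi)=q^{-1}g_\chi$ with $g_\chi=\sum_{y\in\FF_q}e(y^2/\varpi)$ the classical quadratic Gauss sum; the standard identity $g_\chi^2=\chi(-1)\,q$ for the Legendre character $\chi=(\cdot/\varpi)$ then gives $G(\varpi)^n=q^{-n/2}\bigl(\tfrac{(-1)^{n/2}}{\varpi}\bigr)$. Combining cases, $G(t)^n=|t|^{n/2}\bigl(\tfrac{(-1)^{n/2}}{\varpi}\bigr)^{v(t)}$, and the integral evaluates to $|t|^{n/2}\bigl(\tfrac{(-1)^{n/2}\det J}{\varpi}\bigr)^{v(t)}$.

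For the identification with $\gamma(t^{-1}Q)$, I will invoke Weil's theory of characters of second degree \cite{Weil:Certains:groupes}, the nonarchimedean analog of Proposition~\ref{prop:arch:Gaussian}: the Fourier transform of the tempered distribution $\psi(t^{-1}Q(\cdot))$ on $V(F)$ equals $\gamma(t^{-1}Q)\,|{\det(t^{-1}J)}|^{-1/2}\,\bar\psi(tQ^\vee(\cdot))$. Because $\psi$ is unramified, $V(\OO)$ is self-dual with volume $1$, so $\widehat{\one_{V(\OO)}}=\one_{V(\OO)}$; moreover $|\det J|=1$ and $tQ^\vee(w)\in\OO$ for $t\in\OO$ and $w\in V(\OO)$ (since $J^{-1}\in\gl_n(\OO)$), so $\bar\psi(tQ^\vee(w))\equiv 1$ on the domain of integration. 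Pairing the Fourier transform formula against $\one_{V(\OO)}$ therefore gives
$$\int_{V(\OO)}\psi(t^{-1}Q(w))\,dw=|t|^{n/2}\,\gamma(t^{-1}Q),$$
and comparison with the direct computation proves the claimed identity and exhibits $\gamma(t^{-1}Q)$ as an unramified character depending only on $v(t)\bmod 2$.

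The main obstacle is the Gauss sum evaluation for $v(t)$ odd: the sign $(-1)^{n/2}$ in the final formula arises entirely from $g_\chi^2=\chi(-1)\,q$ raised to the $(n/2)$-th power, so correctly tracking this sign through the iteration of~\eqref{by:ind} is the one substantive point; everything else is a formal application of Lemma~\ref{lem:nondyadic} and of the defining properties of Weil's $\gamma$-factor.
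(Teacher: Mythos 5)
Your proof is correct, and it arrives at the same formula the paper does, but the final Gauss sum evaluation is organized somewhat differently in a way worth noting. Both arguments diagonalize $J$ over $\OO$ (the paper cites O'Meara, you cite Scharlau, but the latter is over a field; for a diagonalization with $P\in\GL_n(\OO)$ you want the O'Meara reference), reduce to a product of one-dimensional Gauss integrals via Lemma~\ref{lem:nondyadic}, and invoke Weil's Fourier transform formula together with $\widehat{\one}_{V(\OO)}=\one_{V(\OO)}$ and $|\det J|=1$ for the identification with $|t|^{n/2}\gamma(t^{-1}Q)$. Where you diverge is in extracting the sign. The paper first proves that $\gamma(t^{-1}Q)$ depends only on $v(t)$, uses this to replace the arbitrary unramified $\psi$ by the explicit character $\psi_F=\psi_{\QQ_p}\circ\mathrm{tr}_{F/\QQ_p}$, and then invokes \cite[Thm.~5.2]{Szechtman:Gauss} plus Gauss's theorem on the exact sign of $\sum_{x\bmod p}e(x^2/p)$. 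You instead stay with the arbitrary unramified $\psi$ and observe that since $n$ is even, only $G(\varpi)^n=q^{-n}(g^2)^{n/2}$ is needed, where $g$ is the quadratic Gauss sum for $\FF_q$ with respect to the induced additive character; the elementary identity $g^2=\left(\tfrac{-1}{\varpi}\right)q$, valid for any nontrivial additive character, then gives the answer directly. Your route saves the reduction to the standard character and replaces the full Gauss sign theorem by its much easier squared version, which is a genuine streamlining. Minor points: your notation $\sum_{y\in\FF_q}e(y^2/\varpi)$ should really refer to the additive character of $\FF_q$ induced by $\psi$, not literal $e(\cdot)$; and the paper writes $\det Q$ in the statement whereas your computation naturally produces $\det J$, but these agree up to the square factor $2^n$, so the formulas coincide.
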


\begin{proof} 

The first assertion of the lemma is a consequence of \cite[\S 14, Th\'eor\`eme 2]{Weil:Certains:groupes}. Indeed, loc.~cit.~is the statement that the Fourier transform of the distribution  $w \mapsto \psi\left(\frac{Q(w)}{t}\right)$ is $|t|^{n/2}\gamma(t^{-1}Q)\bar{\psi}\left(tQ^{\vee}(w) \right)$.  Here $|t|^{n}|\det J^{-1}|=|t|^{n}=|\rho|^{-1}$ in the notation of loc.~cit.~since $dv$ is the self-dual Haar measure.  Thus using the fact that the Fourier transform sends products to convolutions and $\widehat{\one}_{V(\OO_F)}=\one_{V(\OO_F)}$ we have
\begin{align*}
\int_{V(F)}\one_{V(\OO)}(w)\psi\left(\frac{Q(w)}{t}\right)dw&=|t|^{n/2}\gamma(t^{-1}Q)\int_{V(F)}\one_{V(\OO_F)}(0-w)\bar{\psi}\left(tQ^\vee(w) \right)dw\\
&=|t|^{n/2}\gamma(t^{-1}Q).
\end{align*}
We now use this identity to prove that $\gamma(t^{-1}Q)$ depends only on the valuation of $t$.

For any $g \in \GL_n(\OO)$ one has 
\begin{align} \label{J:act}
\int_{V(\OO)} \psi\left(\frac{ Q(w)}{t} \right)dw
=\int_{V(\OO)} \psi\left(\frac{ \langle gw,gw\rangle_Q}{t} \right)dw.
\end{align}
Unimodular quadratic forms over non-dyadic discrete valuation rings can be diagonalized \cite[\S 92]{OMeara:Intro:quad:forms}.  Thus we can choose $g \in \GL_n(\OO)$ such that \eqref{J:act} is equal to 
\begin{align*}
\prod_{i=1}^{n} \int_{\OO}\psi\left(\frac{a_ix^2}{t} \right)dx
\end{align*} 
for some $a_1,\dots,a_{n} \in \OO^\times$.
By Lemma \ref{lem:nondyadic} replacing $t$ by $u^{-1}t$ for $u \in \OO^\times$ has the effect of replacing this by 
\begin{align}
&\prod_{i=1}^{n} \int_{\OO}\psi\left(\frac{ua_ix^2}{t} \right)dx=\prod_{i=1}^{n}\left(\frac{u}{\varpi} \right)^{v(t)}\int_{\OO}\psi\left(\frac{a_ix^2}{t} \right)dx
=\prod_{i=1}^{n}\int_{\OO}\psi\left(\frac{a_ix^2}{t} \right)dx. \label{diag}
\end{align}
Here we have used the fact that $n$ is even.  The assertion that $\gamma(t^{-1}Q)$ depends only on the valuation of $t$ follows.

Our last task is to compute $\gamma(t^{-1}Q)$.  Let $p$ be the residual characteristic of $F$ and let
\begin{align*}
\psi_{\QQ_p}(x)&=e^{2 \pi i \mathrm{pr}(x)}
\end{align*}
where $\mathrm{pr}(x) \in \ZZ[p^{-1}]$ is chosen so that $\mathrm{pr}(x)-x \in \ZZ_p$.   Let
$\psi_{F}:=\psi_{\QQ_{p}} \circ \mathrm{tr}_{F/\QQ_{p}}$.  
There is a $c \in F^\times$ such that 
$\psi(x)=\psi_F(cx)$ for all $x \in F$.  Since $F$ is absolutely unramified and $\psi$ is unramified $c \in \OO^\times$.
Since $\gamma(t^{-1}Q)$ only depends on the valuation of $t$, it follows that replacing $\psi$ by $\psi_F$ will not change the value of 
$\gamma(t^{-1}Q)$.  Thus to compute $\gamma(t^{-1}Q)$ we can and do assume $\psi=\psi_F$.

Since \eqref{J:act} is equal to \eqref{diag} we can apply Lemma \ref{lem:nondyadic} to see that 
$$
\gamma(t^{-1}Q)=|t|^{-n/2}\left(\frac{\det Q}{\varpi} \right)^{v(t)}\left(\int_{\OO}\psi_F\left(\frac{y^2}{t} \right)dy\right)^{n}.
$$
Applying \eqref{by:ind}, if $v(t)$ is even this is $1$.  If $v(t)$ is odd it is equal to 
\begin{align*}
\left(\frac{\det J}{\varpi} \right)q^{-\dim V/2}\left(\int_{\OO}\psi_F\left(\frac{y^2}{t\varpi^{-(v(t)-1)}} \right)dy\right)^{n}.
\end{align*}
We now use \cite[Theorem 5.2]{Szechtman:Gauss}
to deduce that
\begin{align*}
q^{\dim V}\left(\int_{\OO}\psi_F\left(\frac{y^2}{t\varpi^{-(v(t)-1)}} \right)dy\right)^{n}
&=\left(\sum_{x \in \ZZ/p\ZZ}\psi_{\QQ_p}\left(\frac{x^2}{p}\right)\right)^{[F:\QQ_p]n}\\
&=\left(p\left(\frac{-1}{p} \right)\right)^{[F:\QQ_p]n/2}\\
&=\left(q\left(\frac{-1}{\varpi} \right)\right)^{n/2}.
\end{align*}
Here in the penultimate equality we have used Gauss' theorem on the sign of quadratic Gauss sums (see, e.g.~\cite[\S 6.4]{Ireland:Rosen}).
The lemma follows.

\end{proof}

\begin{lem} \label{lem:prelim:comp} Suppose that $x \in \OO^\times$, $t \in \OO \cap F^\times$, and $\xi \in V(F)$.   One has
$$
\int_{V(\OO_{F})} 
\psi\left(\frac{xQ(w)+\langle\xi,w \rangle}{t} \right)dw=\one_{V(\OO)}(\xi)\bar{\psi}\left(\frac{Q^\vee (\xi)}{xt}\right)|t|^{n/2}\gamma(t^{-1}Q).
$$
\end{lem}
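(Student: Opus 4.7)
The plan is to reduce the $n$-dimensional integral to a product of one-dimensional integrals by diagonalizing $Q$ over $\OO$, then apply Lemma \ref{lem:nondyadic} factor-by-factor. Concretely, since we are in a non-dyadic setting and $J \in \GL_n(\OO)$, the form $Q$ is $\OO$-unimodular, so by \cite[\S 92]{OMeara:Intro:quad:forms} there exists $g \in \GL_n(\OO)$ with $g^{t}Jg = \mathrm{diag}(a_1,\dots,a_n)$, $a_i \in \OO^\times$. Because $g$ acts on $V(\OO)$ by a measure-preserving bijection, substituting $w = gy$ and setting $\xi' := g^t\xi$ gives
\begin{align*}
\int_{V(\OO)}\psi\left(\frac{xQ(w)+\langle \xi,w\rangle}{t}\right)dw = \prod_{i=1}^{n}\int_{\OO}\psi\left(\frac{(xa_i/2)y^2 + \xi'_i\,y}{t}\right)dy.
\end{align*}
Here we use $2 \in \OO^\times$, so that $u_i := xa_i/2 \in \OO^\times$ and Lemma \ref{lem:nondyadic} applies.

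Next I apply Lemma \ref{lem:nondyadic} to each factor. The $i$th integral equals
\begin{align*}
\one_{\OO}(\xi'_i)\,\psi\!\left(\frac{-(\xi'_i)^2}{4u_i t}\right)\int_{\OO}\psi\!\left(\frac{u_i y^2}{t}\right)dy = \one_{\OO}(\xi'_i)\,\psi\!\left(\frac{-(\xi'_i)^2}{2xa_i t}\right)\int_{\OO}\psi\!\left(\frac{u_i y^2}{t}\right)dy.
\end{align*}
Taking the product, the indicators combine to $\prod_i \one_{\OO}(\xi'_i)=\one_{V(\OO)}(g^t\xi)=\one_{V(\OO)}(\xi)$, since $g \in \GL_n(\OO)$.

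For the phase, I expand $Q^\vee$ in the diagonal coordinates: from $g^tJg = D := \mathrm{diag}(a_i)$ we get $J^{-1}=gD^{-1}g^t$, hence $\xi^t J^{-1}\xi = (\xi')^t D^{-1}\xi' = \sum_i (\xi'_i)^2/a_i$, so $Q^\vee(\xi)=\tfrac12\sum_i(\xi'_i)^2/a_i$. Therefore the exponents sum to
\begin{align*}
\sum_{i=1}^{n}\frac{-(\xi'_i)^2}{2xa_i t} = -\frac{Q^\vee(\xi)}{xt},
\end{align*}
giving the factor $\bar\psi(Q^\vee(\xi)/(xt))$. Finally, the product of the Gaussian factors reassembles via the inverse substitution $y = g^{-1}w$ into $\int_{V(\OO)}\psi(xQ(w)/t)\,dw$, which by Lemma \ref{lem:nonarch:Gaussian} (applied with $t$ replaced by $t/x$, noting $|x|=1$ and $v(t/x)=v(t)$) equals $|t|^{n/2}\gamma(t^{-1}Q)$. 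Multiplying the three ingredients yields the claimed formula.

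The only real obstacle is bookkeeping: ensuring the diagonalization correctly transports $\xi$ to $g^t\xi$ (so that $\xi \in V(\OO) \Leftrightarrow \xi' \in V(\OO)$), and verifying that the $n$ individual quadratic phase contributions assemble exactly into $-Q^\vee(\xi)/(xt)$ via $J^{-1}=gD^{-1}g^t$. Everything else is a direct appeal to the two preceding lemmas, and the hypothesis that $n$ is even plays no role here (it was used only in Lemma \ref{lem:nonarch:Gaussian} to make $\gamma(t^{-1}Q)$ a genuine character).
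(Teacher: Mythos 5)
Your proof is correct, but it takes a genuinely different route from the paper's. The paper completes the square directly in $n$ dimensions: since one easily sees the integral vanishes unless $\xi\in V(\OO)$, one may substitute $w\mapsto w-J^{-1}\xi/x$, which is a measure-preserving bijection of $V(\OO)$ (because $J^{-1}\xi/x\in V(\OO)$); the phase then becomes $xQ(w)/t - Q^\vee(\xi)/(xt)$, and Lemma \ref{lem:nonarch:Gaussian} finishes. You instead diagonalize $Q$ over $\OO$ via O'Meara, reduce to $n$ one-dimensional integrals, and apply Lemma \ref{lem:nondyadic} coordinate by coordinate. The tradeoff: the paper's argument is shorter and avoids the identity $J^{-1}=gD^{-1}g^t$ and the $\xi\mapsto g^t\xi$ bookkeeping, but it asserts the $\one_{V(\OO)}(\xi)$ factor without proof; your route derives the indicator $\prod_i\one_\OO(\xi'_i)=\one_{V(\OO)}(\xi)$ as a byproduct of Lemma \ref{lem:nondyadic}, making the vanishing statement explicit. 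Both approaches make the same appeal to Lemma \ref{lem:nonarch:Gaussian} at the end. One small clarification to your closing remark: the step $\gamma((t/x)^{-1}Q)=\gamma(t^{-1}Q)$ does rely on $\gamma(t^{-1}Q)$ depending only on $v(t)$, which Lemma \ref{lem:nonarch:Gaussian}'s proof obtains from $n$ being even; so $n$ even enters indirectly exactly where you say it does, but it is not entirely inert for this lemma.
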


\begin{proof} 
Since $J \in \GL_n(\OO)$ it is not hard to see that the integral vanishes identically unless $\xi \in V(\OO)$.  We henceforth assume that $\xi \in V(\OO)$.

Taking a change of variables $w \mapsto w-\frac{J^{-1}\xi}{x}$ we see that
\begin{align*}
&\int_{V(\OO_{F})} 
\psi\left(\frac{xQ(w)+\langle\xi,w \rangle}{t} \right)dw=\bar{\psi}\left( \frac{Q(J^{-1}\xi)}{xt}\right)\int_{V(\OO_{F})} 
\psi\left(\frac{xQ(w)}{t} \right)dw.
\end{align*}
Invoking Lemma \ref{lem:nonarch:Gaussian} we deduce the current lemma.

\end{proof}

To ease notation let
\begin{align}
\label{Gauss}
\mathcal{G}:F^{\times} \lto \CC^\times
\end{align}
be the unique unramified character that agrees with $t \mapsto \gamma(t^{-1}Q)$ when restricted to $\OO \cap F^\times$.

\begin{thm} \label{thm:nonarch:comp} Let $\chi:F^\times \to \CC^\times$ be a character and let $\mathrm{Re}(s)>0$.  If $\chi$ is ramified then 
$$
\mathcal{I}(\one_{\OO^2\times V(\OO)},\chi_s)=0.
$$
If $\chi$ is unramified then
\begin{align*}
&\mathcal{I}(\one_{\OO^2\times V(\OO)},\chi_s)(\xi)L(s+n/2+1,\chi\mathcal{G})\\
&=\sum_{j=0}^\infty \chi_{s+1}(\varpi^j)\one_{V(\OO)}(\varpi^{-j}\xi)\int_{\OO}\one_{t\OO}(Q^{\vee}(\xi))\chi\mathcal{G}_{s+n/2}(t)dt^\times.
\end{align*}

\end{thm}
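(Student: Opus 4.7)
The ramified case follows from the $\OO^\times$-invariance of $\Phi = \one_{\OO^2 \times V(\OO)}$ in each coordinate. Making the change of variables $t \mapsto ut$ and $w \mapsto uw$ for $u \in \OO^\times$ leaves the defining integral of $\mathcal{I}(\one,\chi_s)$ invariant except for an extra factor of $\chi(u)$; integrating this over $u \in \OO^\times$ forces vanishing unless $\chi|_{\OO^\times}$ is trivial (cf.\ Lemma \ref{lem:nonarch:support}).

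For unramified $\chi$, the plan is to Fourier-expand the constraint $\one_{t\OO}(Q(w)) = \one_\OO(Q(w)/t)$. Because $\psi$ is unramified and the self-dual Haar measure gives $\OO$ mass one, one has $\one_\OO(\alpha) = \int_\OO \psi(y\alpha)\,dy$. Substituting this and swapping order of integration (justified by absolute convergence for $\mathrm{Re}(s) > 0$) writes
\[
\mathcal{I}(\one,\chi_s)(\xi) = \int_{\OO \cap F^\times}\chi_s(t)\,dt^\times \int_\OO dy \int_{V(\OO)} \psi\!\left(\tfrac{yQ(w) + \langle\xi,w\rangle}{t}\right) dw.
\]
I would then stratify by valuation: $y = \varpi^j v$, $t = \varpi^k r$ with $j,k \geq 0$ and $v,r \in \OO^\times$. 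In the range $k \geq j$ the phase rewrites as $(vQ(w) + \langle\varpi^{-j}\xi,w\rangle)/(\varpi^{k-j}r)$, which is of the form of Lemma \ref{lem:prelim:comp}: the inner $w$-integral vanishes unless $\varpi^{-j}\xi \in V(\OO)$, and otherwise equals $\bar\psi(Q^\vee(\xi)/(v\varpi^{j+k}r))\,q^{(j-k)n/2}\,\mathcal{G}(\varpi)^{k-j}$, where I use $Q^\vee(\varpi^{-j}\xi) = \varpi^{-2j}Q^\vee(\xi)$ and Lemma \ref{lem:nonarch:Gaussian} to identify $\gamma(t^{-1}Q)$ with the unramified character $\mathcal{G}$. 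In the complementary range $k < j$ the quadratic part of the phase lies in $\OO$ and hence contributes $1$, so the integral collapses to $\one_{V(\OO)}(\varpi^{-k}\xi)$.

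It remains to integrate out $v,r \in \OO^\times$ and organize the double sum over $(j,k)$. In the range $k \geq j$, integrating $\bar\psi(Q^\vee(\xi)/(v\varpi^{j+k}r))$ against $dv\,dr^\times$ produces a Ramanujan-type factor supported where the internal valuation is $\geq -1$; after reindexing $l = k - j$ this turns the $l$-sum into a truncated geometric series in $C := \chi\mathcal{G}(\varpi)q^{-s-n/2}$ plus a single boundary term. The range $k < j$ contributes $q^{-1}\sum_k \chi_{s+1}(\varpi^k)\one_{V(\OO)}(\varpi^{-k}\xi)$. Combining the two, the elementary factoring $(1-q^{-1}) + q^{-1}(1-C) = 1 - Cq^{-1}$ (applied to the relevant partial sum) absorbs the Ramanujan boundary correction into the $k<j$ contribution and extracts the factor $1 - Cq^{-1} = L(s+n/2+1,\chi\mathcal{G})^{-1}$, leaving precisely $\sum_j \chi_{s+1}(\varpi^j)\one_{V(\OO)}(\varpi^{-j}\xi)$ times the integral on the right-hand side. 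The main obstacle is this final bookkeeping step: tracking the Ramanujan-sum boundary term produced by the $(v,r)$-integration, and verifying that it combines with the $k<j$ contribution in exactly the manner required to produce the claimed $L$-factor.
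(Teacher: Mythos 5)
Your proposal is essentially the same argument the paper gives, with one stylistic variation. The paper opens the congruence constraint discretely, writing $\one_{t\OO}(Q(w)) = |t|\sum_{x \in \OO/t}\psi(xQ(w)/t)$, so that $v(x) \le v(t)$ is automatic and a single stratification by $j = v(x)$ (followed by the change of variables $t \mapsto \varpi^j t$) suffices; you instead use the continuous detection $\one_{\OO}(Q(w)/t) = \int_{\OO}\psi(yQ(w)/t)\,dy$, which forces the extra split into $k \ge j$ and $k < j$. Your $k < j$ branch (where the quadratic phase is integral and only $\one_{V(\OO)}(\varpi^{-k}\xi)$ survives) is exactly the contribution of $x \equiv 0 \pmod{t}$ in the paper's sum, so the two formulations are equivalent; yours is just parametrised by two valuations $(j,k)$ and two units $(v,r)$ instead of one of each. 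The ramified-vanishing argument and the use of Lemma \ref{lem:prelim:comp} together with the identification $\gamma(t^{-1}Q) = \mathcal{G}(t)$ from Lemma \ref{lem:nonarch:Gaussian} match the paper step for step.

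The gap is the one you flag yourself, and it is not trivial. Two points deserve explicit attention. First, after integrating out $v,r \in \OO^\times$ the Ramanujan factor in the $k \ge j$ branch is supported where $v(Q^\vee(\xi)) \ge j + k - 1$, i.e.\ $l := k - j \le v(Q^\vee(\xi)) - 2j + 1$; so the truncation point of the geometric series in $C = \chi\mathcal{G}_{s+n/2}(\varpi)$ depends on $j$, and the $j$-sum does not split off from the $l$-sum. You must therefore run the identity $q^{-1} + (1-q^{-1})\sum_{l=0}^{a-2j} C^l - q^{-1}C^{a-2j+1} = (1 - Cq^{-1})\sum_{l=0}^{a-2j} C^l$ (with $a = v(Q^\vee(\xi))$, the boundary term coming from $l = a - 2j + 1$) separately for each $j$. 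Second, when you do this the inner $t$-integral you recover is $\int_\OO \one_{t\OO}(Q^\vee(\varpi^{-j}\xi))\,\chi\mathcal{G}_{s+n/2}(t)\,dt^\times$ --- the argument is $\varpi^{-j}\xi$, which is what Lemma \ref{lem:prelim:comp} actually hands you after replacing $\xi$ by $\varpi^{-j}\xi$, and which agrees with the displayed $Q^\vee(\xi)$ when $Q^\vee(\xi) = 0$ (the case feeding into the global secondary term). You should also record where the weights $q^{-1}$ and $(1 - q^{-1})$ come from: the latter is $\mathrm{vol}(\OO^\times, dv)$ under the self-dual additive measure, the former is the geometric series $\sum_{j > k} q^{-j}(1-q^{-1}) = q^{-(k+1)}$; both rely on the normalisations $\mathrm{vol}(\OO, dt) = 1$ and $\mathrm{vol}(\OO^\times, dt^\times) = 1$ fixed in \S\ref{sec:notation}.
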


\begin{proof}
One has
\begin{align*}
\mathcal{I}(\one_{\OO^2\times V(\OO)},\chi_s)(\xi)&:=\int_{F^{\times} \times V(F)}\one_{\OO^2 \times V(\OO_{F})}\left(\frac{Q(w)}{t},t,w\right)\psi\left(\frac{\langle \xi,w \rangle}{t}\right)dw\chi_s(t)dt^\times.
\end{align*}
Via an easy analogue of the argument of Lemma \ref{lem:nonarch:support} one sees that this integral vanishes identically unless $\chi$ is unramified.  We therefore assume for the rest of the proof that $\chi$ is unramified. 

The integral above is equal to 
\begin{align*}
&\int_{\OO}\int_{V(\OO)}\sum_{x \in \OO/t}
\psi\left(\frac{xQ(w)+\langle\xi,w \rangle}{t}\right) dw\chi_{s+1}(t)dt^\times\\
&=\int_{\OO \times V(\OO)}\sum_{j=0}^{v(t)}\sum_{x \in (\OO/\varpi^{-j}t)^\times}\psi\left(\frac{x\varpi^jQ(v)+\langle\xi,v \rangle}{t}\right) dw\chi_{s+1}(t)dt\\
&=\sum_{j=0}^\infty \chi_{s+1}(\varpi^j)\int_{\OO \times V(\OO)}\sum_{x \in (\OO/t)^\times}\psi\left(\frac{xQ(w)+
\langle \varpi^{-j}\xi,w\rangle
}{t}\right) dw\chi_{s+1}(t)dt^\times.
\end{align*}
We now employ Lemma \ref{lem:prelim:comp} to write this as
\begin{align*}
&\sum_{j=0}^\infty \chi_{s+1}(\varpi^j)\one_{V(\OO)}(\varpi^{-j}\xi)\int_{\OO}\sum_{x \in (\OO/t)^\times}\bar{\psi}\left(\frac{Q^{\vee}(\xi)}{xt} \right)\chi_{s+1+n/2}(t)\mathcal{G}(t)dt^\times\\
&=\sum_{j=0}^\infty \chi_{s+1}(\varpi^j)\one_{V(\OO)}(\varpi^{-j}\xi)\int_{\OO}\left(
\one_{t\OO}(Q^\vee(\xi))-q^{-1}\one_{\varpi\OO}(t)\one_{t\varpi^{-1}}(Q^\vee(\xi))\right)
\chi\mathcal{G}_{s+n/2}(t)dt^\times\\
&=\sum_{j=0}^\infty \chi_{s+1}(\varpi^j)\one_{V(\OO)}(\varpi^{-j}\xi)(1-\chi\mathcal{G}_{s+n/2+1}(\varpi))\int_{\OO}\one_{t\OO}(Q^\vee(\xi))\chi\mathcal{G}_{s+n/2}(t)dt^\times.
\end{align*}
\end{proof}


\bibliography{../refs}{}
\bibliographystyle{alpha}

\end{document}